\documentclass[11pt]{amsart}
%
\usepackage{graphpap}
\usepackage{amssymb}
\usepackage{hyperref}
\usepackage{xcolor}
\usepackage{color}
\setlength{\textwidth}{15cm}
\setlength{\oddsidemargin}{0cm}
\setlength{\evensidemargin}{0cm}
\setcounter{tocdepth}{2}

\newtheorem{proposition}{Proposition}
\newtheorem{lemme}[proposition]{Lemma}
\newtheorem{theo}[proposition]{Theorem}
\newtheorem{corollaire}[proposition]{Corollary}

\newtheorem{remarque}[proposition]{Remark}

\numberwithin{equation}{section}
\numberwithin{proposition}{section}

\def\11{{\rm 1~\hspace{-1.4ex}l} }
\def\R{\mathbb R}

\def\Z{\mathbb Z}
\def\N{\mathbb N}

\def\T{\mathbb T}

\def\wtilde{\widetilde}
\begin{document}
	\title[Super-critical wave equations ]
	{
		New examples of probabilistic well-posedness for nonlinear wave equations 
	}
	\author{Chenmin Sun, Nikolay Tzvetkov}
	\address{
		Universit\'e de Cergy-Pontoise,  Cergy-Pontoise, F-95000,UMR 8088 du CNRS}
	\email{nikolay.tzvetkov@u-cergy.fr}
	\email{
		chenmin.sun@u-cergy.fr}
	\begin{abstract} 
		We consider fractional wave equations with exponential or arbitrary polynomial nonlinearities.  We  prove the global well-posedness on the support of the corresponding Gibbs measures.
		We provide ill-posedness constructions showing that the results are truly super-critical in the considered functional setting. 
		We also present a result in the case of a general randomisation in the spirit of the work by N.~Burq and the second author. 
	\end{abstract}

	\maketitle
	%
	
	\section{Introduction}
	Our goal in this work is to give new examples of probabilistic well-posedness for nonlinear wave equations with data of super-critical regularity. 
	More precisely, we consider fractional wave equations with exponential or arbitrary polynomial nonlinearities. 
	We  will prove the global well-posedness on the support of the corresponding Gibbs measures (and also a result for more general random initial data). 
	We will also provide ill-posedness constructions showing that the considered problem is 
	super-critical in the sense that the obtained solutions crucially depend on the particular regularisations of the initial data.   
	Let us recall that in the case of a deterministic low regularity well-posedness for dispersive PDE's, the obtained solutions can be seen as limits of  approximated smooth solutions,  independently of the choice of the approximation of the low regularity initial data (see e.g. \cite{CKSTT,IK,KT}). 
	\subsection{The case of exponential nonlinearity}
	Let $(M,g)$ be a compact smooth riemannian manifold of dimension $d$ without boundary.  Let $\Delta_g$ be the associated Laplace-Beltrami operator. For $\sigma\in\R$, we set
	$
	D^\sigma=(1-\Delta_g)^{\sigma/2}.
	$
	Consider the following (fractional) wave equation
	\begin{equation}\label{1}
	\partial_t^2 u+D^{2\alpha} u+e^{u}=0\,,
	\end{equation}
	where $u:\R\times M\rightarrow\R$.
	The case $\alpha=1$ corresponds to the usual wave (or Klein-Gordon) equation posed on $M$. 
	Let $u$ be a smooth solution of \eqref{1}.  If we multiply \eqref{1} by $\partial_t u$ and integrate over $M$, we get
	$$
	\frac{d}{dt}\Big[\int_{M}\big( \frac{1}{2}(\partial_t u)^2+\frac{1}{2}(D^\alpha u)^2+e^u\big)\Big]=0\,.
	$$
	Let $(\varphi_n)_{n\geq 0}$ be an orthonormal basis of $L^2(M)$ of eigenfunctions of $-\Delta_g$ associated with increasing eigenvalues $(\lambda_n^2)_{n\geq0}$.
	By the Weyl asymptotics $\lambda_n\approx n^{\frac{1}{d}}$. With $v = \partial_t u$, we rewrite \eqref{1} as the following first order system:
	\begin{equation}\label{sys}
	\partial_t u=v, \quad \partial_t v=-D^{2\alpha} u-e^u.
	\end{equation}
	The system \eqref{sys} is a Hamiltonian system of PDEs with the Hamiltonian:
	\begin{equation}\label{H}
	H(u,v)=\frac{1}{2}\int_{M}\big((D^\alpha u)^2 + v^2\big) +\int_M e^u.
	\end{equation}
	The Hamiltonian $H(u,v)$ controls the ${\mathcal H^{\alpha}}$ norm of $(u,v)$, where we denote 
	$$
	{\mathcal H^{s}}\equiv H^s(M)\times H^{s-\alpha}(M)
	$$
	for any $s\in\R$,  and $H^s(M)$ is the classical  Sobolev space of order $s$.  For $\alpha\geq \frac{d}{2}$,
\begin{equation}\label{Moser-Trudinger}
	\Big|\int_M e^u\Big|
	\leq C
	e^{C\|u\|_{H^\alpha(M)}^2}
\end{equation}
	and therefore for these values of $\alpha$ the potential energy can be seen as a perturbation. 
	In particular we can show the global well-posedness of \eqref{sys} for data in ${\mathcal H^{\alpha}}$.
	\begin{theo}\label{thm1}
		Let $\alpha\geq \frac{d}{2}$.
		The Cauchy problem associated with \eqref{sys} is (deterministically) globally well-posed for data in ${\mathcal H^{s}}$, $s\geq \alpha$.
	\end{theo}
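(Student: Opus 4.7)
The plan has two parts: a local theory in $\mathcal{H}^s$ by a contraction argument, then globalization via the Hamiltonian \eqref{H}. Let $S(t)$ denote the linear propagator of $\partial_t^2 + D^{2\alpha}$,
\begin{equation*}
S(t)(u_0,v_0) = \bigl(\cos(tD^\alpha) u_0 + D^{-\alpha}\sin(tD^\alpha) v_0,\ -D^\alpha \sin(tD^\alpha) u_0 + \cos(tD^\alpha) v_0\bigr),
\end{equation*}
which is uniformly bounded on $\mathcal{H}^s$ in $t$. I would produce a fixed point of
\begin{equation*}
\Phi(u,v)(t) = S(t)(u_0,v_0) - \int_0^t S(t-\tau)\bigl(0,\, e^{u(\tau)}\bigr)\, d\tau
\end{equation*}
in a closed ball of $C([0,T]; \mathcal{H}^s)$ by Picard iteration. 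The only nontrivial input is a Lipschitz bound on the Nemytskii operator $u \mapsto e^u$ into $H^{s-\alpha}$: for $s > d/2$, Sobolev embedding yields $H^s \hookrightarrow L^\infty$ and $H^s$ is an algebra, so a standard Moser tame estimate gives
\begin{equation*}
\|e^u - e^w\|_{H^s} \leq C(\|u\|_{L^\infty}, \|w\|_{L^\infty})\,\|u-w\|_{H^s},
\end{equation*}
which combined with $H^s \hookrightarrow H^{s-\alpha}$ closes the contraction on $[0,T]$ for a $T$ depending only on $\|(u_0,v_0)\|_{\mathcal{H}^s}$. The critical endpoint $s=\alpha=d/2$ is handled by \eqref{Moser-Trudinger}, used in its stronger form $\|e^u\|_{L^p} \leq C_p \exp(C_p\|u\|_{H^{d/2}}^2)$ valid for any $p<\infty$, together with $H^{d/2} \hookrightarrow L^q$ for every finite $q$ and a Taylor remainder representation of $e^u-e^w$.

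For globalization I would invoke conservation of $H$. A direct computation for smooth solutions gives $\frac{d}{dt}H(u,v)=0$, and a standard approximation argument propagates this identity to the $\mathcal{H}^s$ solutions constructed above. Since $\int_M e^u \geq 0$, this yields the a priori bound
\begin{equation*}
\|(u(t),v(t))\|_{\mathcal{H}^\alpha}^2 \leq 2 H(u_0,v_0)
\end{equation*}
on the maximal interval of existence. Combined with the local well-posedness of the previous step, whose time of existence depends only on the $\mathcal{H}^\alpha$ norm of the data when $s=\alpha$, this immediately gives global existence in $\mathcal{H}^\alpha$.

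For $s > \alpha$ one propagates the higher regularity by a Gronwall argument on the higher-order energy $E_s(t) := \|D^s u\|_{L^2}^2 + \|D^{s-\alpha} v\|_{L^2}^2$. Differentiating and using the system yields $\frac{d}{dt}E_s = -2\langle D^{2(s-\alpha)} e^u, v\rangle$, which by Cauchy--Schwarz and the tame Moser estimate is bounded by $C(\|u\|_{L^\infty})(1+E_s)$. When $\alpha > d/2$, Sobolev embedding and the uniform $\mathcal{H}^\alpha$-bound from the second step control $\|u\|_{L^\infty}$, so Gronwall produces at most exponential-in-time growth of $E_s$, precluding finite-time blow-up. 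The main obstacle is the endpoint $\alpha = d/2$: since $H^\alpha \not\hookrightarrow L^\infty$, the $L^\infty$ norm must be recovered from $\|u\|_{H^s}$ itself, which requires a more delicate Brezis-Gallouet type logarithmic inequality combined with \eqref{Moser-Trudinger} to conclude that $E_s$ stays finite on any bounded time interval.
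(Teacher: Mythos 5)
The paper omits the proof of Theorem~\ref{thm1} entirely, stating only that it ``follows from a standard fix point argument, based on \eqref{Moser-Trudinger}''.  Your proposal fills in exactly the intended scheme: contraction mapping for the local theory, Hamiltonian conservation for the global $\mathcal{H}^{\alpha}$ bound, and a higher-order energy/Gronwall argument for persistence of regularity, with \eqref{Moser-Trudinger} used precisely where the authors indicate.  For $\alpha>\frac{d}{2}$ (and for the local theory at $s=\alpha=\frac{d}{2}$) your argument is correct and complete.

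Where I would push back is the endpoint $\alpha=\frac{d}{2}$ with $s>\alpha$, which you flag as requiring ``a Brezis--Gallouet type logarithmic inequality'' but leave unresolved.  As you describe it, this does not close.  With $R:=\|u\|_{H^{d/2}}$ bounded a priori by the energy and $E_s:=\|D^su\|_{L^2}^2+\|D^{s-\alpha}v\|_{L^2}^2$, Brezis--Gallouet gives $\|u\|_{L^\infty}\lesssim R\,(1+\log(2+E_s))^{1/2}$, while the tame Moser bound for the Nemytskii map $u\mapsto e^u$ carries the constant $e^{\|u\|_{L^\infty}}$.  Substituting, one arrives at
\begin{equation*}
\frac{d}{dt}\log(2+E_s)\ \lesssim\ \exp\Bigl(CR\,\sqrt{\log(2+E_s)}\Bigr),
\end{equation*}
and since $\int^{\infty}e^{-CR\sqrt{y}}\,dy<\infty$ the comparison ODE can blow up in finite time; the logarithmic improvement in $\|u\|_{L^\infty}$ is not strong enough to defeat an exponential-in-$\|u\|_{L^\infty}$ constant.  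A route that does close is to exploit \eqref{Moser-Trudinger} directly rather than through $L^\infty$: for every finite $p$, $\|e^u\|_{L^p}\leq C_p\exp(C_p R^2)$ is bounded in terms of the \emph{conserved} quantity alone, so in the contribution $\langle D^{s-\alpha}v,\,D^{s-\alpha}e^u\rangle$ one may place $e^u$ in $L^p$ with $p$ large, place the derivative factors of $u$ coming from $D^{s-d/2}e^u$ in $L^q$ with $q$ just below the Sobolev-critical exponent, and interpolate those factors between $H^{d/2}$ (a priori bounded) and $H^s$.  This produces $\frac{d}{dt}E_s\lesssim_R E_s^{1-\epsilon}$ for some $\epsilon>0$, hence polynomial growth and global persistence.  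With this replacement your proof is complete in the full range $\alpha\geq \frac{d}{2}$, $s\geq\alpha$.
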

We omit the proof of Theorem \ref{thm1}. It follows from a standard fix point argument, based on \eqref{Moser-Trudinger}.

	Let $(g_n,h_n)_{n\geq 0}$ be a family of independent standard gaussians on the probability space $(\Omega,\mathcal{F},\mathbb{P})$.  
	The gaussian measure $\mu$ is the image measure under the map $\omega \mapsto (u^\omega,v^\omega)$ defined by
	\begin{equation}\label{random_data}
	u^\omega(x) = \sum_{n =0}^\infty \frac{g_n(\omega)}{\langle \lambda_n\rangle ^{\alpha}}\varphi_n(x)\, , \quad
	v^\omega(x) = \sum_{n = 0}^\infty h_n(\omega)\varphi_n(x)\, .
	\end{equation}
	We can see $\mu$ as a probability measure on ${\mathcal H}^\sigma$, $\sigma<\alpha-d/2$.
	One has the following key property of $\mu$.
	\begin{proposition}\label{ho}
		Let $\alpha>d/2$. For $\theta<\alpha-d/2$, $ \|D^\theta u\|_{L^\infty(M)}$ is finite $\mu$-almost surely. More precisely 
		$$
		\mu\{u:\,  \|D^\theta u\|_{L^\infty(M)}>\lambda\}\leq C\, e^{-c\lambda^2}
		$$
		for some $C, c>0$ independent of $\lambda\geq 1$. 
	\end{proposition}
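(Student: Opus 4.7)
The plan is to combine a Khintchine-type (Gaussian chaos) inequality with the pointwise local Weyl law on $M$, then upgrade to $L^\infty$ by Sobolev embedding, and finally obtain the Gaussian tail via Chebyshev plus optimisation in the exponent.

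First I would expand
$$
D^\theta u^\omega(x) = \sum_{n \geq 0} g_n(\omega)\, \langle \lambda_n\rangle^{\theta-\alpha}\, \varphi_n(x),
$$
which, for each fixed $x$, is a centred Gaussian with variance
$$
S(x)^2 := \sum_{n \geq 0} \langle \lambda_n\rangle^{2(\theta-\alpha)} \varphi_n(x)^2.
$$
Using the pointwise local Weyl bound $\sum_{\lambda_n\sim N} \varphi_n(x)^2 \lesssim N^d$ (uniformly in $x \in M$), together with a dyadic decomposition in $\lambda_n$, gives $S(x) \leq C$ uniformly in $x$ precisely when $2(\alpha-\theta) > d$, i.e.\ under the hypothesis $\theta < \alpha-d/2$. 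The Khintchine inequality for Gaussian series then yields, for every $p \geq 2$,
$$
\bigl\|D^\theta u^\omega(x)\bigr\|_{L^p(\Omega)} \leq C\sqrt{p}\, S(x) \leq C\sqrt{p},
$$
uniformly in $x \in M$.

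To upgrade this pointwise bound in $x$ to an $L^\infty$ bound, I would fix $\varepsilon > 0$ small enough that $\theta + \varepsilon$ still lies strictly below $\alpha - d/2$ and apply the same estimate to $D^{\theta+\varepsilon} u^\omega$. Taking the $L^q(M)$ norm in $x$ and invoking Minkowski's inequality (valid for $p \geq q$ finite) gives
$$
\bigl\|\,\|D^{\theta+\varepsilon} u^\omega\|_{L^q(M)}\,\bigr\|_{L^p(\Omega)} \leq C\sqrt{p}.
$$
Choosing $q$ large enough so that $\varepsilon q > d$, the Sobolev embedding $W^{\varepsilon,q}(M) \hookrightarrow L^\infty(M)$ then yields
$$
\bigl\|\,\|D^\theta u^\omega\|_{L^\infty(M)}\,\bigr\|_{L^p(\Omega)} \leq C\sqrt{p}, \qquad p \geq p_0,
$$
for some constants $C, p_0$ independent of $p$.

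Chebyshev's inequality finally gives $\mu\{\|D^\theta u\|_{L^\infty(M)} > \lambda\} \leq (C\sqrt{p}/\lambda)^p$, and optimising in $p$ (take $p \sim c\lambda^2$ with $c$ small) produces the claimed decay $\leq Ce^{-c\lambda^2}$ for $\lambda \geq 1$; almost-sure finiteness then follows by Borel--Cantelli. The only nontrivial input here is the pointwise local Weyl bound of H\"ormander, valid on any compact smooth Riemannian manifold; the cruder $L^\infty$ eigenfunction bound $\|\varphi_n\|_{L^\infty} \lesssim \lambda_n^{(d-1)/2}$ would only yield $\theta < \alpha - d + 1/2$, so access to the sharp exponent $\alpha - d/2$ really hinges on the square-summed spectral projector estimate. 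Once the Weyl bound is admitted, the remaining ingredients (Gaussian hypercontractivity, Minkowski, Sobolev embedding, Chebyshev optimisation) are entirely routine.
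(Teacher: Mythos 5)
Your proof is correct and follows essentially the same route as the paper: the paper simply declares that Proposition~\ref{ho} follows from the argument in the proof of Proposition~\ref{LD}, which is exactly the chain you wrote out (Khinchin for the pointwise Gaussian variance, the H\"ormander/local Weyl square-sum bound \eqref{Hormander} yielding \eqref{last} under the condition $2(\alpha-\theta)>d$, Minkowski plus Sobolev embedding $W^{\varepsilon,q}\hookrightarrow L^\infty$, and Chebyshev with optimisation $p\sim c\lambda^2$). Your version is in fact slightly cleaner, since it drops the time variable present in Proposition~\ref{LD}, and your side remark about the crude eigenfunction bound $\|\varphi_n\|_{L^\infty}\lesssim\lambda_n^{(d-1)/2}$ being insufficient is a correct and worthwhile observation.
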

The proof of Proposition \ref{ho} follows directly from the same argument as in the proof of Proposition \ref{LD}, in particular from \eqref{last}.	Applying Proposition~\ref{ho} with $\theta=0$ we obtain that 
	$
	\int_{M}e^u
	$
	is finite $\mu$ almost surely and we can define the Gibbs measure $\rho$  associated with \eqref{sys} as 
	$$
	d\rho(u,v)=e^{-\int_M e^u}d\mu(u,v)\,.
	$$
	Indeed, using that if $u=\sum_{n} c_n \varphi_n$ then 
	$$
	\frac{1}{2}\int_{M} (D^\alpha u)^2=\sum_{n=0}^\infty \langle \lambda_n\rangle ^{2\alpha}c_n^2,
	$$
	we deduce that one may interpret $\mu$ as a renormalisation of the formal measure 
	$$
	Z^{-1}\, e^{-\frac{1}{2}\int_{M}((D^\alpha u)^2 + v^2)} \,\, du\, dv
	$$
	and therefore $\rho$ becomes $ Z^{-1} e^{-H(u,v)} dudv$ which reminds the Gibbs measure for finite dimensional systems. 
	We have the following result. 
	\begin{theo}\label{thm2}
		Let $\alpha>\frac{d}{2}$ and $0<\sigma<\alpha-\frac{d}{2}$.
		The problem \eqref{sys} is $\mu$ almost surely globally well-posed. Moreover $\rho$ is invariant under the resulting flow $\Phi(t)$ in the following sense. There exists a measurable set $\Sigma\subset\mathcal{H}^{\sigma}$ with full $\mu$ measure, such that $\Phi(t)(\Sigma)=\Sigma$ and for any measurable set $A\subset\Sigma$, we have $\rho(A)=\rho(\Phi(t)A)$ for all $t\in\mathbb{R}$.
	\end{theo}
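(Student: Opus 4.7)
The plan is to follow the Bourgain invariant-measure scheme, adapted to the exponential nonlinearity. The first step is the spectral truncation: for each $N$ let $E_N$ denote the orthogonal projector onto $\mathrm{span}\{\varphi_0,\dots,\varphi_{N-1}\}$, and consider the truncated system
$$
\partial_t u=v,\qquad \partial_t v=-D^{2\alpha}u-E_N\bigl(e^{E_N u}\bigr).
$$
On the range of $E_N$ this is a finite-dimensional Hamiltonian ODE with Hamiltonian $H_N(u,v)=\tfrac12\int_M((D^\alpha u)^2+v^2)+\int_M e^{E_N u}$, while on the complement the dynamics reduces to the free linear wave flow. Liouville together with conservation of $H_N$ then show that the truncated Gibbs measure $d\rho_N=e^{-\int_M e^{E_N u}}d\mu$ is invariant under the resulting flow $\Phi_N(t)$, and Proposition~\ref{ho} with $\theta=0$ combined with dominated convergence yields $\rho_N\to\rho$ in total variation.

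Next I establish local well-posedness for \eqref{sys} at regularity $\sigma$, with estimates uniform in $N$ for the truncated system. Since the deterministic theory does not reach $\sigma$, I write $u=u_{\rm lin}+w$ with $u_{\rm lin}(t)=\cos(tD^\alpha)u_0+D^{-\alpha}\sin(tD^\alpha)v_0$ the free evolution of the random data. Proposition~\ref{ho} applied to $u_{\rm lin}(t)$ provides gaussian tails on $\|u_{\rm lin}\|_{L^\infty([-T,T]\times M)}$ for any $T$; conditional on this norm being bounded by some $R$, the remainder $w$ solves $\partial_t^2 w+D^{2\alpha} w=-e^{u_{\rm lin}+w}$ with zero data and a forcing bounded in $L^\infty$ as soon as $\|w\|_{L^\infty}$ is of order one. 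Since $\alpha>d/2$ one has the embedding $H^\alpha\hookrightarrow L^\infty(M)$, so a standard contraction argument in $C([-\tau,\tau];H^\alpha)\cap C^1([-\tau,\tau];L^2)$ produces $w$ on a time $\tau=\tau(R)$; the same argument applies with the nonlinearity replaced by $E_N(e^{E_N(u_{\rm lin}+w)})$ and yields $\Phi_N(t)u_0\to\Phi(t)u_0$ on $[-\tau,\tau]$ on the event considered.

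To globalize I run the Bourgain iteration: given $T,\epsilon>0$, introduce a ``good set'' $G_{R,\tau}\subset\mathcal H^\sigma$ on which the local theory applies for time $\tau=\tau(R)$, and choose $R$ so large that $\mu(G_{R,\tau}^c)\le\epsilon\tau/T$; since the density of $\rho_N$ with respect to $\mu$ is bounded by one, the same bound holds for $\rho_N$ uniformly in $N$. Invariance of $\rho_N$ under $\Phi_N(k\tau)$ and a union bound over $0\le k\le T/\tau$ then produce a subset $\Sigma_{N,T,\epsilon}\subset\mathcal H^\sigma$ of $\rho_N$-measure at least $\rho_N(\mathcal H^\sigma)-\epsilon$ on which $\Phi_N$ is defined up to time $T$. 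A diagonal extraction and Borel--Cantelli yield a full-$\mu$-measure set $\Sigma\subset\mathcal H^\sigma$, flow-invariant by construction, on which the limit flow $\Phi(t)$ is globally defined; the invariance $\rho(\Phi(t)A)=\rho(A)$ is obtained by passing to the limit in the corresponding identity for $\rho_N$ and $\Phi_N$. The main obstacle is the uniformity in $N$ at the local level: the exponential nonlinearity is highly unforgiving, so one needs the gaussian tail of Proposition~\ref{ho}, not just $\mu$-a.s.\ finiteness, to keep $\|E_N e^{E_N u_{\rm lin}}\|_{L^\infty}$ bounded independently of $N$ on an event of sufficiently large $\mu$-measure for the iteration to close.
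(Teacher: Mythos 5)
Your overall strategy is the right one — spectral truncation, finite-dimensional Liouville invariance, local theory on a ``good set'' carrying Gaussian tails, Bourgain globalisation, and passage to the limit for the measure invariance — and this is indeed the route the paper takes. However, there is a genuine gap at the heart of the local theory, precisely where the exponential nonlinearity makes the problem different from the cubic-wave references you are implicitly following.

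You assert that ``Proposition~\ref{ho} applied to $u_{\rm lin}(t)$ provides gaussian tails on $\|u_{\rm lin}\|_{L^\infty([-T,T]\times M)}$''. This does not follow. Proposition~\ref{ho} is a statement about the law of a single random function $u^\omega$ (equivalently, about $S(t)(u^\omega,v^\omega)$ at each \emph{fixed} $t$, since the invariance $\cos^2+\sin^2=1$ preserves the Gaussian law pointwise in $t$). It says nothing about $\sup_{|t|\le T}\|\cdot\|_{L^\infty(M)}$, which is a supremum over an uncountable family; the coefficients of $\partial_t S(t)(u^\omega,v^\omega)$ carry a factor $\lambda_n^\alpha$, so no naive continuity-in-time argument closes the gap. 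This is exactly the point the paper flags as the novelty: for exponential nonlinearities one must prove a probabilistic Strichartz estimate with $L^\infty$ in the \emph{time} variable (Proposition~\ref{LD}), whereas earlier works only needed $L^p_t$ for finite $p$. The proof of that estimate is nontrivial: it passes through a fractional Sobolev embedding in $t$, a Besov-space bound on $D_t^\delta(\eta(t)e^{it\lambda_n^\alpha})$ costing $\lambda_n^{\alpha\delta}$ (Lemma~\ref{fd}), and H\"ormander's spectral cluster estimate \eqref{Hormander} to sum the resulting series. Your proposal treats this key ingredient as a corollary of a pointwise-in-time statement and therefore skips the one step that is genuinely new.

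A secondary issue: you truncate with the sharp spectral projector $E_N$ and need $\|E_N e^{E_N u}\|_{L^\infty}$ bounded uniformly in $N$; sharp spectral projectors are not bounded on $L^\infty(M)$ on a general compact manifold (nor even on $\T^d$), which is why the paper uses the smooth cut-off $\pi_N$ and cites its uniform $L^\infty$ boundedness. The Hamiltonian/Liouville argument still works with $\pi_N$ (the paper carries it out), so this is repairable, but as written your contraction estimate for the truncated system does not close. Beyond these two points, the globalisation and the limit passage for invariance are sketched in a way consistent with the paper, though the latter requires the additional care in topologies (inner regularity, reduction to compacts, and a quantitative approximation lemma) that the paper carries out explicitly.
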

	The main difficulty in Theorem~\ref{thm2} comes from the fact that the random data \eqref{random_data} is not in the scope of applicability of the deterministic well-posedness result of
	Theorem~\ref{thm1}.  We however have the following connection between Theorem~\ref{thm1} and Theorem~\ref{thm2}.
	\begin{theo}\label{thm3}
		Let $(u_N^\omega(t,x),v_N^\omega(t,x))$ be the solution of \eqref{sys} given by Theorem~\ref{thm1} with smooth data 
		$$
		u^\omega_N(x) = \sum_{\lambda_n\leq N} \frac{g_n(\omega)}{\langle \lambda_n\rangle ^{\alpha}}\varphi_n(x)\, , \quad
		v^\omega_N(x) = \sum_{\lambda_n\leq N} h_n(\omega)\varphi_n(x)\, .
		$$
		Then almost surely in $\omega$,  $(u_N^\omega(t,x),v_N^\omega(t,x))$  converges to the solution 
		$(u,v)$ of \eqref{sys} constructed in Theorem~\ref{thm2}, in $C(\mathbb{R};\mathcal{H}^{\sigma})$ (and $u_N^\omega$ converges to $u$ in
		$L^{\infty}_{loc}(\mathbb{R}\times M)$).
	\end{theo}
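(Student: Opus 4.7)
The plan is to deduce Theorem~\ref{thm3} by combining a deterministic stability estimate for the perturbed equation with the globalisation argument (à la Bourgain) used to build the flow in Theorem~\ref{thm2}.

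\medskip

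\emph{Step 1: linear part and decomposition.} I would write $u_N^\omega=\phi_N+w_N$ where
$$\phi_N(t)=\cos(tD^\alpha)u_N^\omega+D^{-\alpha}\sin(tD^\alpha)v_N^\omega,$$
and similarly $u=\phi+w$. Since the $\phi_N$ form the partial Fourier sums of the stationary Gaussian process $\phi$, standard Borel--Cantelli reasoning combined with the tail bound of Proposition~\ref{ho} (applied with $\theta=0$ and $\theta=\sigma$) shows that almost surely
$$\phi_N\longrightarrow \phi\quad\text{in }C_{loc}(\R;\mathcal H^\sigma)\cap L^\infty_{loc}(\R\times M).$$
Consequently the convergence of $(u_N^\omega,v_N^\omega)$ to $(u,v)$ is reduced to the convergence of the smoother nonlinear remainders $w_N\to w$ in $C_{loc}(\R;\mathcal H^\alpha)$.

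\medskip

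\emph{Step 2: local stability for the remainder.} The difference $W_N:=w_N-w$ solves a linear wave equation with source $e^{\phi_N+w_N}-e^{\phi+w}$ and zero data. Writing
$$e^{\phi_N+w_N}-e^{\phi+w}=\int_0^1 e^{\theta(\phi_N+w_N)+(1-\theta)(\phi+w)}\bigl((\phi_N-\phi)+W_N\bigr)\,d\theta,$$
the pointwise exponential factor is controlled by $e^{\|\phi_N\|_{L^\infty}+\|\phi\|_{L^\infty}}(e^{w_N}+e^w)$. Combining the uniform $L^\infty$ bound on $\phi_N,\phi$ from Step~1 with the Moser--Trudinger estimate \eqref{Moser-Trudinger} applied to $w_N,w\in H^\alpha$, an energy estimate for the linear wave equation yields, on a short interval $[0,\tau]$ depending only on the local norms of $\phi$ and $w$,
$$\|W_N\|_{C([0,\tau];\mathcal H^\alpha)}\le C\bigl(\|\phi_N-\phi\|_{L^\infty_{t,x}([0,\tau]\times M)}+\|\phi_N-\phi\|_{C([0,\tau];\mathcal H^\alpha)}\bigr),$$
and in fact a Gronwall argument applied to $\|W_N(t)\|_{\mathcal H^\alpha}$ closes the estimate. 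This gives $W_N\to 0$ on $[0,\tau]$ almost surely, and the same conclusion in $L^\infty_{t,x}$ for $u_N^\omega-u$ after adding back the linear part.

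\medskip

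\emph{Step 3: globalisation via the Gibbs measure.} The previous step is deterministic but the existence time $\tau$ depends on the sizes of $\phi$ and $w$. To iterate on $[-T,T]$ I would rely on the invariance of $\rho$ already established in Theorem~\ref{thm2}: for every $\varepsilon>0$ and $T>0$ there is a set $\Sigma_{\varepsilon,T}\subset\Sigma$ with $\mu(\Sigma_{\varepsilon,T}^c)<\varepsilon$ on which the random quantities $\|\phi(t)\|_{L^\infty_x}$, $\|\phi(t)\|_{\mathcal H^\sigma}$ and $\|w(t)\|_{\mathcal H^\alpha}$ remain bounded uniformly for $|t|\le T$ by a constant $K(\varepsilon,T)$; the same uniform bound, up to a fixed multiplicative constant, holds simultaneously for all $\phi_N,w_N$ on $\Sigma_{\varepsilon,T}$ by Bourgain's approximation lemma applied to the finite-dimensional invariant measures $\rho_N$ of the truncated flow. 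On $\Sigma_{\varepsilon,T}$ one can therefore iterate Step~2 with a uniform step length $\tau=\tau(K(\varepsilon,T))$ over $[-T,T]$. Letting $\varepsilon\to 0$ and $T\to\infty$ along countable sequences and intersecting the full-measure sets yields the claimed almost-sure convergence in $C(\R;\mathcal H^\sigma)$ and in $L^\infty_{loc}(\R\times M)$.

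\medskip

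The principal difficulty is Step~2: because of the exponential nonlinearity, any stability estimate requires simultaneous $L^\infty$ control of the rough Gaussian part and Moser--Trudinger control of the smoother remainder, and the two must be coupled uniformly in $N$. Once this is achieved, Step~3 is essentially the same measure-theoretic Bourgain procedure that underlies Theorem~\ref{thm2}.
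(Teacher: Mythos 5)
Your strategy (split the solution into linear free evolution plus smoother nonlinear remainder, prove local stability for the remainder with Gronwall, then iterate globally using the invariant measure to control the iteration length) matches the paper's overall plan in Section~5. But there is a genuine gap in Step~1, and it is precisely the point the paper flags as the main technical novelty.

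You claim that Proposition~\ref{ho} plus ``standard Borel--Cantelli reasoning'' gives $\phi_N\to\phi$ in $L^\infty_{loc}(\R\times M)$ almost surely. Proposition~\ref{ho}, however, only controls the tail of $\|D^\theta u\|_{L^\infty(M)}$ for the \emph{random data} at time zero; it says nothing about $\sup_{|t|\le T}\|S(t)\Pi_N^\perp(u_0^\omega,v_0^\omega)\|_{L^\infty(M)}$. To pass from a pointwise-in-time Gaussian tail bound to an $L^\infty_t$ bound one needs the fractional-in-time-derivative / Besov argument of Proposition~\ref{LD}, and to make the Borel--Cantelli argument close one needs a quantitative rate in $N$, which is exactly what Proposition~\ref{probabilistic2} delivers: $\mu\bigl(N^s\|\Pi_N^\perp(u_0,v_0)\|_{\mathcal{Y}^\beta}>R\bigr)\le Ce^{-cR^2}$, so that $\sum_N\mu(\mathcal{K}_N^c)<\infty$. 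Without this estimate your Step~2 cannot even be launched, because the stability inequality for $W_N$ involves $\|\phi_N-\phi\|_{L^\infty_{t,x}}$, and the exponential nonlinearity makes this $L^\infty$-in-time control mandatory (the authors explicitly note that this is new relative to \cite{BT-JEMS}, where only $L^p_t$ with finite $p$ was established). You should replace the appeal to Proposition~\ref{ho} by Proposition~\ref{probabilistic2} and the $\mathcal{Y}^\beta$/$\mathcal{Z}^\beta$ norms.

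A second, smaller issue: in Step~3 you propose to get uniform-in-$N$ bounds on $\|\phi_N(t)\|_{L^\infty_x}$ and $\|w_N(t)\|_{\mathcal{H}^\alpha}$ over $[-T,T]$ by ``Bourgain's approximation lemma applied to the finite-dimensional invariant measures $\rho_N$ of the truncated flow''. But $\rho_N$ is invariant under the truncated flow $\Phi_N(t)$, whereas here $(u_N^\omega,v_N^\omega)$ solves the \emph{full} equation with truncated data, i.e.\ $\Phi(t)\Pi_N(u_0^\omega,v_0^\omega)$, which has no invariant measure. The paper resolves this by a bootstrap (Lemma~\ref{convergence-iteration}): first obtain the global-in-time bound for $\Phi(t)(u_0,v_0)$ alone from Proposition~\ref{globalexistence}, and then infer the bound on $\Phi(t)\Pi_N(u_0,v_0)$ for $N$ large from the already established local convergence. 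That ordering matters, and your proposal inverts it.
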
	
	The restriction $\alpha>d/2$ in Theorem~\ref{thm2} and Theorem~\ref{thm3} is optimal in the sense that for $\alpha=d/2$ the construction of the measure $\rho$ fails because $\int e^u$ is ill defined on the support of $\mu$. However, for $d=2$ one may suitably renormalise $\int e^u$. Such a renormalisation would unfortunately lead to a change of the equation. 
	In the case of the renormalisation used in \cite{GRV,DKRV,DRV} one would obtain the wave equation, without the mass term, but with a source term, related to the curvature of $M$. One can also use a renormalisation as in \cite{BTT-Toulouse} which would avoid the source term in the equation but the mass term should be kept. In the case of both renormalisations we have just mentioned, one can apply  compactness techniques as employed in \cite{BTT-Toulouse, OTh}. The obtained solutions would however be non unique and an approximation result as the one of 
	Theorem~\ref{thm3} is completely out of reach of the scope of the applicability of these weak solution techniques. We believe that obtaining a result as Theorem~\ref{thm3} in the case $\alpha=1$ ($d=2$) for the above mentioned renormalised equations is an interesting and challenging problem.  It is worth mentioning that the relevant parabolic equations  have been  recently studied in \cite{Ga}.
	\\
	
	A novelty in the proof of Theorem~\ref{thm2} and Theorem~\ref{thm3} compared to \cite{B,bourgain,BT-JEMS,BTT-AIF,Tz-Lecture} is that because of the exponential nonlinearity, we need to prove probabilistic Strichartz estimates involving $L^\infty$ norms with respect to the time variables. 
	\subsection{The case of arbitrary polynomial nonlinearity}
	The strategy to prove Theorem ~\ref{thm2} and Theorem~\ref{thm3}  works equally well for power type nonlinearity of an arbitrary degree, as follows
	\begin{equation}\label{wave-polynome}
	\partial_t^2u+D^{2\alpha}u+u^{2k+1}=0.
	\end{equation}
	We have the following statement in the context of \eqref{wave-polynome}.
	\begin{theo}\label{thmpolynomepositive}
		Let $\alpha>\frac{d}{2}$ and $0<\sigma<\alpha-\frac{d}{2}$. Then \eqref{wave-polynome} is $\mu$ almost surely globally well-posed. 
	\end{theo}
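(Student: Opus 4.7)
The plan is to follow the same three-step strategy used for Theorem~\ref{thm2}, since the polynomial nonlinearity $u^{2k+1}$ is in fact easier to handle than the exponential one, yet requires exactly the same $L^\infty$-in-time probabilistic control of the free evolution. Since $\alpha>d/2$, Proposition~\ref{ho} applied with $\theta=0$ gives $\|u\|_{L^\infty(M)}<\infty$ $\mu$-a.s. with Gaussian tails, and in particular $\int_M u^{2k+2}<\infty$ $\mu$-a.s. I would first use this to define the Gibbs measure
$$
d\rho(u,v)=Z^{-1}e^{-\frac{1}{2k+2}\int_M u^{2k+2}}d\mu(u,v),
$$
whose density lies in $(0,1]$ $\mu$-a.s. and hence in every $L^p(d\mu)$, so $\rho$ is a well-defined probability measure absolutely continuous with respect to $\mu$.

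Next I would set up the local Cauchy theory by splitting $u=u_F+w$, where
$$
u_F(t)=\cos(tD^\alpha)u_0+\frac{\sin(tD^\alpha)}{D^\alpha}v_0
$$
is the free evolution of the random data and $w$ solves
$$
\partial_t^2 w+D^{2\alpha}w=-(u_F+w)^{2k+1},\qquad (w,\partial_t w)|_{t=0}=(0,0).
$$
The key probabilistic input, proved exactly as for the exponential equation, is that $u_F\in L^\infty_{loc}(\R;L^\infty(M))$ almost surely, with a quantitative large-deviation bound of the same Gaussian type as in Proposition~\ref{ho}. On the deterministic side, using $H^\alpha\hookrightarrow L^\infty$ (since $\alpha>d/2$) and the algebra-type bound
$$
\|(u_F+w)^{2k+1}\|_{L^2(M)}\lesssim \bigl(\|u_F\|_{L^\infty}+\|w\|_{H^\alpha}\bigr)^{2k+1},
$$
together with the mapping $\sin(tD^\alpha)/D^\alpha:L^2\to H^\alpha$, a standard fixed-point argument in $C([0,T];\mathcal H^\alpha)$ (with zero initial data) produces a local-in-time nonlinear remainder $w$, on a random time interval $[0,T(\omega)]$ depending on $\|u_F\|_{L^\infty_TL^\infty_x}$.

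To globalize, I would introduce the spectral truncation $P_N$ and the finite-dimensional Hamiltonian ODE
$$
\partial_t^2 u_N+D^{2\alpha}u_N+P_N\bigl((P_Nu_N)^{2k+1}\bigr)=0,
$$
which preserves the truncated Gibbs measure
$$
d\rho_N=Z_N^{-1}e^{-\frac{1}{2k+2}\int_M (P_Nu)^{2k+2}}d\mu.
$$
Combining the $L^\infty$ probabilistic Strichartz bounds for $u_F$ with the $\rho_N$-invariance of the truncated flow $\Phi_N(t)$, Bourgain's iteration scheme extends the local solutions to a set of full $\mu$-measure (hence full $\rho$-measure) globally in time, and a limiting argument shows $\Phi_N(t)\to\Phi(t)$ in $C(\R;\mathcal H^\sigma)$ almost surely, together with the invariance of $\rho$ under $\Phi(t)$.

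The main obstacle, as in the exponential case, is the $L^\infty_t L^\infty_x$ control of $u_F$: since $(u_F+w)^{2k+1}$ must be evaluated pointwise in time inside the Duhamel integral, the usual $L^p_tL^q_x$ Strichartz bounds are insufficient and one genuinely needs the $L^\infty$-in-time probabilistic estimate underlying Proposition~\ref{ho}. Once this is in place the polynomial case is in fact cleaner than the exponential one, since one only needs a finite algebraic power of $\|u_F\|_{L^\infty}$ rather than an exponential moment; the dependence on $k$ enters only through constants in the fixed-point radius and the Bourgain iteration, and is harmless because $\|u_F\|_{L^\infty}$ has sub-Gaussian tails under $\mu$.
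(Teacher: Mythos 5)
Your proposal follows essentially the same route as the paper, which explicitly states that the strategy used for Theorem~\ref{thm2} and Theorem~\ref{thm3} carries over verbatim to the defocusing polynomial nonlinearity: construct the Gibbs measure $d\rho = e^{-\frac{1}{2k+2}\int_M u^{2k+2}}d\mu$ (well-defined thanks to the a.s.\ $L^\infty$ bound from Proposition~\ref{ho}), set up the local theory in the weighted spaces $\mathcal{X}^{\sigma,\beta}$, $\mathcal{Y}^{\beta}$, $\mathcal{Z}^{\beta}$ via the $L^\infty$-in-time probabilistic estimate of Proposition~\ref{LD}, and globalize by invariance of the truncated Gibbs measure under the finite-dimensional Hamiltonian flow together with Bourgain's argument. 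Two minor caveats: the paper's truncated dynamics uses the smooth Fourier multiplier $\pi_N$ rather than a sharp spectral projector, since only the former is uniformly bounded on $L^\infty(M)$ on a general compact manifold; and your assertion that $L^\infty$-in-time control is ``genuinely needed'' for the polynomial case overstates things --- the paper itself singles out the exponential nonlinearity as the reason $L^\infty_t$ is required, and for a polynomial nonlinearity $L^p_t L^q_x$ bounds with finite $p$ as in \cite{BT-JEMS} would suffice, the $L^\infty_t$ framework being reused here merely for uniformity of treatment.
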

	Note that from the scaling consideration:
	$ u(t,x)\mapsto \lambda^{\frac{\alpha}{k}}u(\lambda^{\alpha}t,\lambda x),
	$
	the critical index is $s_c=\frac{d}{2}-\frac{\alpha}{k}.$ Therefore, if  $s<\frac{d}{2}-\frac{\alpha}{k}$, \eqref{wave-polynome} is super-critical with respect to $H^s$. 
	As a consequence, we have an ill-posedness result, Proposition \ref{ill-proposition}, proved in Section~6.
	\\
	
	We underline that Theorem \ref{thmpolynomepositive} is really a super-critical result, in the sense that the way to approximate the solution in $C(\mathbb{R};\mathcal{H}^{\sigma})$ by smooth solutions is very sensitive. More precisely, as a consequence of 
	Theorem~\ref{thmpolynomepositive} and Proposition~\ref{ill-proposition}, we have the following remarkable statement. 
	\begin{corollaire}\label{corollaire1}
		\begin{itemize}
		Assume that $\displaystyle{0<\sigma<\alpha-\frac{d}{2}}$. Let $k\in\N$ such that
			$$\frac{(2k-1)d}{2k+1}<\alpha<\frac{kd}{2},  k\in\mathbb{N}, \,\,\textrm{ and }\,\, \frac{d}{2}-\frac{2\alpha}{2k-1}\leq \sigma<\frac{d}{2}-\frac{\alpha}{k}.$$
			\item For almost every  $(u_0^{\omega},u_1^{\omega})\in {\mathcal H}^{\sigma}$, there exists a sequence 
			$$
			u_N^{\omega}(t,x)\in C(\mathbb{R};C^{\infty}(M)),\quad N=1,2,\cdots
			$$
			of  global solutions  to \eqref{wave-polynome} 
			such that
			$$ \lim_{N\rightarrow \infty}\|(u_{N}^{\omega}(0),\partial_t u_{N}^{\omega}(0))-(u_0^{\omega},u_1^{\omega})\|_{{\mathcal H}^{\sigma}}\rightarrow 0,
			$$
			while for every $T>0$,
			$$ \lim_{N\rightarrow\infty}\|u_n^{\omega}(t)\|_{L^\infty([0,T];H^{\sigma}(M))}\rightarrow\infty.
			$$
			\item 
			Let $(u_N^\omega(t,x),v_N^\omega(t,x))$ be the solution of   \eqref{wave-polynome}  with smooth data 
			$$
			u^\omega_N(x) = \sum_{\lambda_n\leq N} \frac{g_n(\omega)}{\langle \lambda_n\rangle ^{\alpha}}\varphi_n(x)\, , \quad
			v^\omega_N(x) = \sum_{\lambda_n\leq N} h_n(\omega)\varphi_n(x)\, .
			$$
			Then almost surely in $\omega$,  $(u_N^\omega(t,x),v_N^\omega(t,x))$  converges to the solution 
			$(u,v)$ of  \eqref{wave-polynome}  constructed in Theorem~\ref{thmpolynomepositive} in $C(\mathbb{R};\mathcal{H}^{\sigma})$.			
		\end{itemize}
	\end{corollaire}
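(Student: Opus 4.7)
The plan is to treat the two items separately. Item (ii) extends the approximation statement of Theorem~\ref{thm3} from the exponential to the polynomial equation \eqref{wave-polynome}, while item (i) combines Theorem~\ref{thmpolynomepositive} with Proposition~\ref{ill-proposition} to exhibit a second sequence of smooth solutions whose initial data also converge to $(u_0^\omega, u_1^\omega)$ but whose solutions blow up in $L^\infty_t H^\sigma_x$.

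For item (ii), I would reproduce the three-step argument behind Theorem~\ref{thm3}. First, set up a random local Cauchy theory at regularity $\sigma$: writing $u = S(t)(u_0^\omega, u_1^\omega) + w$ with $S(t)$ the free fractional-wave propagator, solve the fixed point for $w$ in a suitable Strichartz space. The hypothesis $\sigma \geq \tfrac{d}{2}-\tfrac{2\alpha}{2k-1}$ is precisely the threshold at which the random linear piece lies $\mu$-almost surely in the $L^p_tL^q_x$ space needed to make $(S(t)(u_0^\omega,u_1^\omega))^{2k+1}$ integrable and to close the contraction; the same estimates hold for the truncated data with $N$-uniform constants. Second, a difference estimate on the common random interval of existence yields $(u_N^\omega, v_N^\omega) \to (u,v)$ in $C([-T,T]; \mathcal{H}^\sigma)$. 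Third, globalise in time by Bourgain's iteration: invariance of $\rho$ under each truncated flow together with Fernique-type tail bounds and Borel--Cantelli give $N$-uniform control of $\|(u_N^\omega(t), v_N^\omega(t))\|_{\mathcal{H}^\sigma}$ at every integer time, and summing the local estimates delivers convergence in $C(\mathbb{R}; \mathcal{H}^\sigma)$ almost surely.

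For item (i), I would apply Proposition~\ref{ill-proposition} to the random data $(u_0^\omega, u_1^\omega)$: for almost every $\omega$, the proposition supplies a sequence of smooth Cauchy data $(\widetilde{u}_{0,n}^\omega, \widetilde{u}_{1,n}^\omega) \to (u_0^\omega, u_1^\omega)$ in $\mathcal{H}^\sigma$ whose smooth global solutions $U_n^\omega$ to \eqref{wave-polynome} satisfy $\|U_n^\omega\|_{L^\infty([0,T]; H^\sigma)} \to \infty$ for every $T>0$, and extracting a subsequence $u_N^\omega := U_{n(N)}^\omega$ gives the required sequence. Should Proposition~\ref{ill-proposition} only be formulated relative to a deterministic profile (typically zero), I would write $(u_0^\omega, u_1^\omega) = (u_M^\omega, v_M^\omega) + (u_0^\omega - u_M^\omega, u_1^\omega - v_M^\omega)$, send the second summand to zero in $\mathcal{H}^\sigma$ by letting $M\to\infty$, and apply the norm-inflating perturbation at frequencies much higher than $M$, using the smoothness of $(u_M^\omega, v_M^\omega)$ to handle the cross-terms in the Duhamel expansion of $(u_M^\omega + \phi_n)^{2k+1}$.

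The main obstacle is item (i), namely the robustness of the norm-inflation mechanism in the presence of a non-trivial rough random background. Concretely, in the Duhamel expansion of $(u_M^\omega + \phi)^{2k+1}$ one must check that the cross-terms $(u_M^\omega)^j\, \phi^{2k+1-j}$ do not cancel the pure self-interaction $\phi^{2k+1}$ responsible for the explosion; this reduces to a careful frequency localisation of $\phi$ relative to the scale $M$, which I expect to be the technical heart of the construction and to parallel the analysis carried out in Section~6.
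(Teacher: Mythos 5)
Your overall plan is sound -- item~(ii) is indeed the polynomial analogue of Theorem~\ref{thm3}, and item~(i) is indeed Proposition~\ref{ill-proposition} applied to the random data -- but there are two points where the proposal mislocates the difficulty.

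First, you attribute the hypothesis $\sigma \geq \frac{d}{2}-\frac{2\alpha}{2k-1}$ to the probabilistic local Cauchy theory, claiming it is ``precisely the threshold at which the random linear piece lies $\mu$-almost surely in the $L^p_tL^q_x$ space needed\ldots to close the contraction.'' This is not the role of that hypothesis. The probabilistic well-posedness (Theorem~\ref{thmpolynomepositive}, and the convergence in item~(ii)) requires only $0<\sigma<\alpha-\frac{d}{2}$, exactly as in the exponential case; the $\mu$-almost sure integrability of the free evolution follows from Proposition~\ref{LD} under that condition alone and has nothing to do with $k$. The lower bound $\sigma\geq \frac{d}{2}-\frac{2\alpha}{2k-1}$ comes exclusively from part~(2) of Proposition~\ref{ill-proposition} and is what makes the norm-inflation construction work near an \emph{arbitrary} smooth background, not just near zero. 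It enters item~(i) only.

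Second, for item~(i) you frame the decomposition $(u_0^\omega,u_1^\omega)=(u_M^\omega,v_M^\omega)+\text{remainder}$ as a fallback ``should Proposition~\ref{ill-proposition} only be formulated relative to a deterministic profile (typically zero),'' and you propose to redo a Duhamel cross-term analysis. This misses the actual situation. Part~(2) of Proposition~\ref{ill-proposition} is already stated for arbitrary $(u_0,u_1)\in C^\infty(M)\times C^\infty(M)$, so the cross-terms $u_L^j\phi^{2k+1-j}$ you worry about are handled \emph{inside} the proposition (this is precisely what the energy estimate of Lemma~\ref{Energyestimate}, Case~1, does, and it is why the condition $\sigma\geq\frac{d}{2}-\frac{2\alpha}{2k-1}$ is imposed there). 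The genuine gap is elsewhere: the random data $(u_0^\omega,u_1^\omega)$ lies only in $\mathcal{H}^\sigma$, not in $C^\infty\times C^\infty$, so Proposition~\ref{ill-proposition}(2) does not apply to it directly. The bridge is a diagonal argument: approximate $(u_0^\omega,u_1^\omega)$ in $\mathcal{H}^\sigma$ by its spectral truncations, apply Proposition~\ref{ill-proposition}(2) around each smooth truncation to produce a family of norm-inflating sequences, and then extract a diagonal subsequence. The paper flags this explicitly just after the proposition (``to pass from $(u_0,u_1)\in\mathcal{H}^s(M)$, one can use the diagonal argument as in~\cite{Tz-Lecture}''). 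Your decomposition idea is in the right spirit, but it is not a conditional workaround for a zero-profile proposition; it is the necessary step for non-smooth data, and it requires no new cross-term estimates beyond those already established in Section~6.

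Item~(ii), as you say, is obtained by rerunning Sections~3--5 with $e^u$ replaced by $u^{2k+1}$; the only changes are simpler (polynomial rather than exponential Lipschitz bounds), and no hypothesis beyond $0<\sigma<\alpha-\frac{d}{2}$ is used there.
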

\begin{remarque}
	For fixed $k\in\mathbb{N}$, if 
	$$
	\frac{(2k-1)d}{2k+1}<\alpha<\frac{kd}{2},
	$$
	then $$\left(\frac{d}{2}-\frac{2\alpha}{2k-1},\frac{d}{2}-\frac{\alpha}{k}\right)\cap \left(0,\alpha-\frac{d}{2}\right)\neq \emptyset.$$
\end{remarque} 
\begin{remarque}
The first assertion of this corollary will follow from the strong ill-posedness result of Proposition \ref{ill-proposition}. Unlike usual ill-posedness construction near the zero initial data, we prove norm-inflation near any smooth data of arbitrary size. The restriction $\alpha>\frac{(2k-1)d}{2k+1}$ here is only a technical assumption (see case 2 in the proof of Lemma \ref{Energyestimate} for detailed discussion). It would be interesting to decide whther the same conclusion holds for the full range $\frac{d}{2}<\alpha<\frac{kd}{2}$. 	
\end{remarque}
	\subsection{General randomisations}
	We remark that for the polynomial nonlinearity, if the underlying manifold is $M=\T^d$, we could also treat the general randomisation introduced in \cite{BT-JEMS}. More precisely, for any  
	$ (u_0,v_0) \in H^s(\T^d)\times H^{s-1}(\T^d),$
	$$
	u_0(x)=a_0+\sum_{n\in \Z^d\setminus\{0\}}(a_{n,1}\cos(n\cdot x)+a_{n,2}\sin(n\cdot x)),$$
	$$ v_0(x)=b_0+\sum_{n\in\Z^d\setminus\{0\}}(b_{n,1}\cos(n\cdot x)x+b_{n,2}\sin(n\cdot x)),
	$$
	we consider the randomisation around $(u_0,v_0)$:
	\begin{equation}\label{randomization-general}
	\begin{split}
	&u_0^{\omega}(x)=a_0g_0(\omega)+\sum_{n\in \Z^d\setminus\{0\}}(a_{n,1}g_{n,1}(\omega)\cos(n\cdot x)+a_{n,2}g_{n,2}(\omega)\sin(n\cdot x)),\\
	& v_0^{\omega}(x)=b_0h_0(\omega)+\sum_{n\in\Z^d\setminus\{0\}}(b_{n,1}h_{n,1}(\omega)\cos(n\cdot x)x+b_{n,2}h_{n,2}(\omega)\sin(n\cdot x)),
	\end{split}
	\end{equation}
	where $\{ g_{0}(\omega), g_{n,j}(\omega), h_0(\omega), h_{n,j}(\omega):n\in \Z^d\setminus\{0\}, j=1,2\}$ are independent standard  Gaussian variables.
	Denote by
\begin{equation*}
\begin{split}
&u_{0,N}^{\omega}(x)=a_0g_0(\omega)+\sum_{|n|\leq N, n\neq 0}(a_{n,1}g_{n,1}(\omega)\cos(n\cdot x)+a_{n,2}g_{n,2}(\omega)\sin(n\cdot x)),\\
& v_{0,N}^{\omega}(x)=b_0h_0(\omega)+\sum_{|n|\leq N, n\neq 0}(b_{n,1}h_{n,1}(\omega)\cos(n\cdot x)x+b_{n,2}h_{n,2}(\omega)\sin(n\cdot x)),
\end{split}
\end{equation*}
	We have the following almost surely global existence as well as uniqueness theorem:
	\begin{theo}\label{thm-general}
		Assume that $M=\T^d$, $\alpha>\frac{d}{2}$. Let $(u_0,v_0)\in \mathcal{H}^s$ with $\frac{(k-1)\alpha}{k}<s<\alpha$. Then almost surely in $\omega\in \Omega$,  \eqref{wave-polynome} with initial data $(u_0^{\omega}, v_0^{\omega})$ is globally well-posed. Moreover, the sequence of smooth solutions $u_N(t)$ to \eqref{wave-polynome} with initial datum $(u_{0,N}^{\omega}, v_{0,N}^{\omega})$ converges in $C(\R;H^s(\T^d)\times H^{s-1}(\T^d))$ to the solution $u^{\omega}(t)$ with initial data $(u_0^{\omega}, v_0^{\omega})$.
	\end{theo}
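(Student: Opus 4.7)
I would follow the Burq--Tzvetkov decomposition strategy. Write $u=u_L+w$, where $u_L$ solves the free equation $\partial_t^2 u_L+D^{2\alpha}u_L=0$ with the random initial data $(u_0^\omega,v_0^\omega)$, and the nonlinear remainder $w$ satisfies
\[
\partial_t^2 w+D^{2\alpha}w+(u_L+w)^{2k+1}=0,\qquad (w,\partial_t w)|_{t=0}=(0,0).
\]
The aim is to show that although $u_L$ only lives in the rough space $\mathcal H^s$ with $s<\alpha$, the remainder $w$ almost surely lies in the sub-critical space $\mathcal H^\alpha$, on which the equation has a classical deterministic Cauchy theory.

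The probabilistic ingredient is Khintchine's inequality applied to the randomised trigonometric series \eqref{randomization-general}: from $(u_0,v_0)\in\mathcal H^s$ one deduces that almost surely, for every $T>0$ and every $p<\infty$,
\[
\|u_L\|_{L^\infty([-T,T];W^{s,p}(\mathbb T^d))}+\|\partial_t u_L\|_{L^\infty([-T,T];W^{s-\alpha,p}(\mathbb T^d))}<\infty,
\]
with the usual Gaussian tails. Since $s>0$, the Sobolev embedding $W^{s,p}\hookrightarrow L^\infty$ (valid for $p>d/s$) then yields $u_L\in L^\infty_{\mathrm{loc},t,x}$ a.s.

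Armed with these bounds, local well-posedness of $w$ in $C([0,T^*);\mathcal H^\alpha)$ follows from a standard contraction argument: since $\alpha>d/2$, $H^\alpha(\mathbb T^d)$ is a Banach algebra, so the Duhamel map $v\mapsto -\int_0^t \tfrac{\sin((t-\tau)D^\alpha)}{D^\alpha}(u_L+v)^{2k+1}\,d\tau$ contracts a small ball of $C([0,T];\mathcal H^\alpha)$ for $T>0$ small (depending on $\omega$). For global existence, I would work with the modified energy
\[
\mathcal E(t)=\tfrac12\|(w,\partial_t w)\|_{\mathcal H^\alpha}^2+\tfrac{1}{2k+2}\int_{\mathbb T^d}w^{2k+2},
\]
compute $\tfrac{d\mathcal E}{dt}=-\int_{\mathbb T^d}\partial_t w\sum_{j=1}^{2k+1}\binom{2k+1}{j}u_L^j w^{2k+1-j}$, and estimate each cross term by H\"older using the probabilistic $L^p$ bounds on $u_L$, the embedding $H^\alpha\hookrightarrow L^\infty$, and the interpolation of $\|w\|_{L^q}$ between $L^{2k+2}$ and $L^\infty$. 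The target is a Gr\"onwall-type inequality of the shape $\mathcal E'\leq C(\omega,T)(1+\mathcal E)$, which excludes finite-time blow-up.

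For the approximation statement, the truncated data $(u_{0,N}^\omega,v_{0,N}^\omega)$ produce solutions $u_N=u_{L,N}+w_N$ within the same framework; $u_{L,N}\to u_L$ a.s.\ in $C_{\mathrm{loc}}(\mathbb R;\mathcal H^s)\cap L^\infty_{\mathrm{loc},t,x}$ by a Fourier truncation applied to the random series, and the continuous dependence inherent in the contraction scheme above transfers this to $w_N\to w$ in $C([0,T];\mathcal H^\alpha)$, hence $u_N\to u$ in the prescribed topology. The main obstacle is closing the modified-energy estimate at the stated threshold $s>\tfrac{(k-1)\alpha}{k}$: among the cross terms, the most costly is the one with a single factor of $u_L$ and $2k$ factors of $w$, and matching the probabilistic $L^p$-gain on $u_L$ against the $2k$ copies of $w$ (each controlled in $L^\infty$ only by $\mathcal E^{1/2}$) is precisely what forces $s$ to lie above $\tfrac{(k-1)\alpha}{k}$.
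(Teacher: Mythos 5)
Your decomposition, choice of modified energy, and local-in-time contraction argument all coincide with the paper's. The probabilistic ingredient (a.s.\ $L^\infty$-in-space bounds on the linear flow $z=S(t)(u_0^\omega,v_0^\omega)$ via Khintchine and the Weyl-type sum) is also the same. Where the proposal has a genuine gap is in the global energy argument: you propose to estimate
\[
\frac{d\mathcal E}{dt}=-\int_{\T^d}\partial_t w\sum_{j=1}^{2k+1}\binom{2k+1}{j}u_L^j w^{2k+1-j}
\]
directly by H\"older, $H^\alpha\hookrightarrow L^\infty$, and interpolation. But run this on the worst term ($j=1$): you get
\[
\Big|\int_{\T^d}\partial_t w\,u_L\,w^{2k}\Big|\le\|\partial_t w\|_{L^2}\|u_L\|_{L^\infty}\|w\|_{L^{4k}}^{2k}\lesssim\|u_L\|_{L^\infty}\,\mathcal E^{1/2}\cdot\mathcal E^{1/2}\cdot\mathcal E^{(k-1)/2}=\|u_L\|_{L^\infty}\,\mathcal E^{(k+1)/2},
\]
using $\|w\|_{L^{4k}}^{2k}\le\|w\|_{L^{2k+2}}^{k+1}\|w\|_{L^\infty}^{k-1}$ and Sobolev. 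The exponent $(k+1)/2$ exceeds $1$ as soon as $k\geq2$, so Gr\"onwall does not close and you get at most a local bound. Note also that this direct estimate never actually uses $s>\frac{(k-1)\alpha}{k}$: it requires only $u_L\in L^\infty$, i.e.\ $s>0$, so it cannot be the mechanism that selects the stated threshold.

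What is missing is the key Oh--Pocovnicu step, used explicitly in Proposition~\ref{energy-estimate}: integrate $\frac{d}{dt}E[w]$ in time and then integrate by parts \emph{in $t$} to transfer the time derivative off $w$ and onto the powers of $z$. This replaces the problematic factor $\partial_t w$ (worth a full $\mathcal E^{1/2}$) by $\partial_t(z^m)=m z^{m-1}\partial_t z$, which is a controlled random object. Observing that $\partial_t z$ carries an intrinsic $D^\alpha$ in front (so $D^{-\alpha}\partial_t z$ has the same $W^{s,\infty}$ regularity as $z$), one can then distribute $D^{\alpha-s}$ derivatives onto the product $w^{2k+1}$ and use Gagliardo--Nirenberg $\|D^{\alpha-s}w\|_{L^{k+1}}\lesssim\|D^\alpha w\|_{L^2}^\theta\|w\|_{L^{2k+2}}^{1-\theta}$. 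The resulting bound is $\lesssim\|D^s z\|_{L^\infty}\,E[w]^{\frac{k}{k+1}+\frac\theta2+\frac{1-\theta}{2(k+1)}}$, and requiring the total exponent to be $\le1$ is precisely what gives the restriction $s>\frac{(k-1)\alpha}{k}$. Without this integration by parts, there is no route to the linear Gr\"onwall inequality at the stated regularity, so the global-existence half of the theorem is not actually proved by your argument. The approximation statement at the end is fine once the uniform a priori bound on $w$ is in hand, exactly as you describe.
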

	We will sketch the proof in  Section~7. 
	The only additional ingredient in the proof of Theorem~\ref{thm-general} is an energy a priori estimate, following the method of Oh-Pocovnicu \cite{OTh-Po} 
	(see also \cite{Sun-Xia}). The crucial fact we use to prove the energy estimate is the almost sure $L^{\infty}$ bound for the linear evolution of the Gaussian random initial data. 
	This is the reason to restrict our consideration to $M=\T^d$ in Theorem~\ref{thm-general}.
	For general randomisations on arbitrary manifold, the almost sure  $L^{\infty}$ bound does not always hold true (see for example \cite{A-Tz}). 
	However, as in  \cite{P-R-T}, using the idea of Burq-Lebeau \cite{BL-AENS}, such an $L^\infty$ bound can be achieved by imposing some assumptions on the variations of the Fourier coefficients of $(u_0,v_0)$. 
	\\
	
	It is worth mentioning that there are many situations when the energy method of Oh-Pocovnicu does not cover the results obtained by exploiting the Gibbs measure. Indeed, for general randomisations,  we need $s\rightarrow 
	\alpha$  as $k\rightarrow \infty$ while for data on the support of the Gibbs measure we need $0<s<\alpha-\frac{d}{2}$, {\it independently}  of $k$. 
	\subsection*{Acknowledgement}
	The authors are supported by the ANR grant ODA (ANR-18-CE40-0020-01). 
	The problem considered in this paper is inspired by a talk of Vincent Vargas at the University of Cergy-Pontoise and by a discussion of the second author with R\'emi Rhodes and Vincent Vargas at ENS Paris. 
	\section{Construction of the Gibbs measure}
\subsection{Notations}	Denote by $\Pi_N$ the sharp spectral projector on $E_N=\mathrm{span}\{\varphi_n:\lambda_n\leq N \}$.  
	Let $\pi_N$ be a smooth projector 
		where 
	$$
	\pi_{N}\big(\sum_{n=0}^\infty c_n \varphi_n \big):=\sum_{n=0}^\infty \psi(\lambda_n/N) c_n \varphi_n,
	$$
	where $\psi\in C^\infty_0(\R)$ $\psi(r)\geq 0$ and $\psi(r)\equiv 1$ if $r\leq 1/2$, $\psi(r)\equiv 0$ if $r>1$. By convention $\pi_{\infty}={\rm Id}$. Clearly, we have
	$$ 
	\pi_N\Pi_N=\pi_N,\quad  \pi_N\Pi_{N/2}=\Pi_{N/2}.
	$$
	We will also use the notations:
	$$ \pi_N^{\perp}:=\mathrm{Id}-\pi_N, \quad \Pi_N^{\perp}:=\mathrm{Id}-\Pi_N.
	$$
\subsection{Definition of Gibbs measure}
	Denote by $\wtilde{\mu}_N$ the distribution of the $E_N\times E_N$ valued random variable
	$$ 
	\omega\mapsto \Big(\sum_{\lambda_n\leq N} \frac{g_n(\omega)}{\langle \lambda_n\rangle ^{\alpha}}\varphi_n(x),\sum_{\lambda_n\leq N}h_n(\omega)\varphi_n(x)\Big).
	$$ 
	Consider the Gaussian  measure $\wtilde{\mu}_N$ induced by this map, which is the probability measure on $\mathbb{R}^{2\dim (E_N)}$ defined by
	$$ d\wtilde{\mu}_N=\prod_{\lambda_n\leq N}\frac{\langle\lambda_n\rangle^{\alpha}}{2\pi}e^{-\frac{(\lambda_n^2+1)^{\alpha}a_n^2}{2}-\frac{b_n^2}{2}}da_ndb_n=\frac{1}{Z_N}e^{-H_0(a_n,b_n)}\prod_{\lambda_n\leq N}da_ndb_n\,,
	$$
	where $H_0$ is the free Hamiltonian
	$$ H_0(u,v)=\frac{1}{2}\int_M(|D^{\alpha}u|^2+|v|^2).
	$$
	Now we define a Gaussian measure on $\mathcal{H}^{\sigma}(M)$$(\sigma<\alpha-d/2)$ be the induced probability measure by the map
	$$ \omega\mapsto \Big(\sum_{n =0}^\infty \frac{g_n(\omega)}{\langle \lambda_n\rangle ^{\alpha}}\varphi_n(x),\sum_{n=0}^\infty h_n(\omega)\varphi_n(x)\Big).
	$$
	The measure $\mu$ can be decomposed into
	$ \mu=\mu^N\otimes\wtilde{\mu}_N
	$
	for all $N$, where $\mu^N$ is the distribution of the random variable on $E_N^{\perp}\times E_N^{\perp}$. Now we define the Gibbs measure $\rho$ by
	$$ d\rho(u)=\exp\Big(-\int_M e^u\Big)d\mu(u).
	$$
	We denote by
	$$ F_N(u)=\int_M e^{\pi_N u},\quad F(u)=\int_M e^u
	$$
	and $$G(u)=\exp\Big(-\int_M e^u\Big),\quad  G_N(u)=\exp\Big(-\int_M e^{\pi_N u}\Big).$$
	To be precise, we firstly define its finite dimensional approximations
	\begin{equation}\label{Gibbs_definition}
	d\wtilde{\rho}_N(u)=G_N(u)d\wtilde{\mu}_N(u),\quad d\rho_N(u)=G_N(u)d\mu(u)=d\mu^N\otimes d\wtilde{\rho}_N.
	\end{equation} 
	The following proposition justifies our definition of $d\rho(u)$.
	\begin{proposition}\label{finite_Gibbs}
		We have the following statements :
		\begin{enumerate}
			\item
			The sequences $(F_N(u))_{N\geq 1}$ and $(G_N(u))_{N\geq 1}$ converge to the limits $F(u), G(u)$ in $L^p(d\mu(u))$, $2\leq p<\infty$,  respectively.  In particular, $G(u)$ exists almost surely with respect to $\mu$.
			\item  $G(u)^{-1}=\exp\left(\int_Me^u\right)$ is almost surely  finite with repsect to $\mu$.
			\item 
			$\lim_{N\rightarrow \infty} \rho_N(A)=\rho(A)$, for every Borel set $A\subset \mathcal{H}^{\sigma}$.
		\end{enumerate}
	\end{proposition}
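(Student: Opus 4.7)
My plan is to handle (1) first, from which (2) and (3) follow directly, and within (1) to treat $F_N$ by a two-step argument (uniform $L^p$ bound plus almost-sure convergence) and deduce the statement for $G_N$ by boundedness.

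For the uniform $L^p$ bound on $F_N$, Minkowski's integral inequality yields
$$
\|F_N\|_{L^p(d\mu)} \le \int_M \mathbb{E}_\mu\bigl[e^{p\pi_N u(x)}\bigr]^{1/p}\,dx.
$$
Since $\pi_N u(x)$ is a centered Gaussian with variance
$$
\sigma_N^2(x)=\sum_{\lambda_n\le N}\psi(\lambda_n/N)^2\langle\lambda_n\rangle^{-2\alpha}\varphi_n(x)^2,
$$
one has $\mathbb{E}_\mu[e^{p\pi_N u(x)}]=\exp\bigl(\tfrac{p^2}{2}\sigma_N^2(x)\bigr)$. I would then invoke the local Weyl law $\sum_{\lambda_n\le \Lambda}\varphi_n(x)^2=O(\Lambda^d)$ uniformly in $x$ together with the assumption $\alpha>d/2$ to conclude $\sup_{N,x}\sigma_N^2(x)<\infty$, yielding $\sup_N\|F_N\|_{L^p(d\mu)}<\infty$ for every finite $p$. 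The same bound passes to the $N=\infty$ case, giving $F\in L^p(d\mu)$, and since $0\le G_N\le 1$ (as $F_N\ge 0$), the $L^p$ control on $G_N$ is trivial.

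For the almost-sure convergence $F_N\to F$, the key is to establish $\pi_N u\to u$ in $L^\infty(M)$ $\mu$-almost surely. I would prove this by running the proof strategy of Proposition~\ref{ho} on the high-frequency tail
$$
\pi_N^\perp u=\sum_{n}(1-\psi(\lambda_n/N))\langle\lambda_n\rangle^{-\alpha}g_n\varphi_n,
$$
expecting a Gaussian chaining / entropy argument to deliver a quantitative tail bound of the form $\mu\{\|\pi_N^\perp u\|_{L^\infty}>\lambda\}\le C e^{-c\lambda^2/\epsilon_N^2}$ with $\epsilon_N\downarrow 0$. Borel-Cantelli along $N=2^k$ then gives the pointwise (in $\omega$) convergence. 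Consequently $e^{\pi_N u}\to e^u$ uniformly on $M$ almost surely, whence $F_N\to F$ and $G_N\to G$ pointwise. Combined with the uniform $L^{2p}$ bound from the previous paragraph, Vitali's theorem upgrades pointwise a.s. convergence to $L^p$ convergence for $F_N$, while for $G_N$ the bounded convergence theorem (using $G_N\le 1$) suffices.

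Part (2) then follows from the a.s. finiteness of $F(u)=\int_M e^u$, which is implied by $F\in L^p(d\mu)$ (or directly by Proposition~\ref{ho} with $\theta=0$). Part (3) is transparent: since $d\rho_N=G_N\,d\mu$ and $d\rho=G\,d\mu$, we get $|\rho_N(A)-\rho(A)|\le\|G_N-G\|_{L^1(d\mu)}\to 0$ uniformly in the Borel set $A\subset\mathcal{H}^\sigma$. The main obstacle I expect is the quantitative tail bound with rate $\epsilon_N\to 0$: Proposition~\ref{ho} as stated gives only a.s. finiteness of the sup norm, and one must revisit its proof to extract a frequency-localized version compatible with the smooth cutoff $\psi$ that produces a summable rate in $N$.
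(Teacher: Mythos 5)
Your proof is correct but takes a genuinely different route from the paper for the crucial convergence step. The paper establishes $L^p$ convergence from two ingredients: a uniform $L^p(d\mu)$ bound on $F_N$ (obtained by reducing to the tail estimate of Proposition~\ref{ho} via $|F_N(u)|\le \mathrm{vol}(M)\,e^{\|\pi_N u\|_{L^\infty}}$) and \emph{convergence in measure}, which is shown \emph{directly} by proving $(F_N)$ is Cauchy in $L^1(d\mu)$: the pointwise estimate $|F_{N_1}(u)-F_{N_2}(u)|\lesssim \|\pi_{N_1}u-\pi_{N_2}u\|_{L^2(M)}\exp(\|\pi_{N_1}u\|_{L^\infty}+\|\pi_{N_2}u\|_{L^\infty})$, followed by Cauchy--Schwarz, makes the first factor go to zero in $L^2(d\mu)$ while the second stays bounded. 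Your route instead goes through almost-sure $L^\infty$ convergence of $\pi_N u$ to $u$, which requires a \emph{quantitative} tail bound on $\|\pi_N^\perp u\|_{L^\infty}$ with a rate $\epsilon_N\downarrow 0$ summable enough for Borel--Cantelli; you then invoke Vitali. This is sound, and the paper in fact proves exactly such a quantitative tail bound in Proposition~\ref{probabilistic2} (for the sharp projector $\Pi_N$, with rate $N^{-s}$ and a Gaussian tail), so your flagged ``main obstacle'' is not a genuine gap. However, the paper's Cauchy-in-$L^1$ argument is strictly lighter: it produces convergence in measure with no need for a quantitative rate, no Borel--Cantelli, and no frequency-localized $L^\infty$ large-deviation estimate. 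Your uniform $L^p$ bound via Minkowski plus the explicit Gaussian moment generating function $\mathbb{E}[e^{p\pi_N u(x)}]=\exp(\tfrac{p^2}{2}\sigma_N^2(x))$ and the uniform spectral bound $\sup_x\sum_n\langle\lambda_n\rangle^{-2\alpha}\varphi_n(x)^2<\infty$ is correct and arguably more transparent than the paper's reduction to the tail bound of Proposition~\ref{ho}; both hinge on the same estimate \eqref{last}. Parts (2) and (3) in your write-up match the paper in spirit; for (3) your direct bound $|\rho_N(A)-\rho(A)|\le\|G_N-G\|_{L^1(d\mu)}$ is a slight simplification of the paper's Cauchy--Schwarz step.
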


	\begin{proof}[Proof of Proposition~\ref{Gibbs_definition}]
		(1) As mentioned in Remark 3.8 of \cite{Tz-AIF},  in order to prove that $F_N(u), G_N(u)$ converge to $F(u), G(u)$ in $L^p(d\mu)$, it will be sufficient to show  that
		\begin{itemize}
			\item $\|F_N(u)\|_{L^p(d\mu)}, \|G_N(u)\|_{L^p(d\mu)}$ are uniformly bounded. 
			\item $F_N(u), G_N(u)$ converge to $F(u), G(u)$ in measure.
		\end{itemize}  
		Now we verify the boundeness in $L^p(d\mu)$. Note that $G_N(u)\leq 1$, we only need to check for $F_N(u)$.  We write
		$$
		|F_{N}(u)|\leq 	 \mathrm{vol}(M)e^{\|\pi_N u\|_{L^\infty(M)}}
		\,.
		$$
		Therefore using Proposition~\ref{ho}, we write for $\lambda\geq 1$,
		$$
		\mu(u: |F_N(u)|>\lambda)\leq \mu\big(u: \|\pi_N u\|_{L^\infty(M)}>\frac{\log(\lambda)}{C}\big)\leq C'\, e^{-C(\log(\lambda))^2}
		\leq C_l\lambda^{-l},
		$$
		for every $l\geq 1$.  This proves the uniform in $N$ boundedness of $\|F_N(u)\|_{L^p(d\mu)}$ for every $p<\infty$. Next, 
		we claim that  $F_N(u)$ converges in measure to $F(u)$. Once this is justified, the convergence in measure for $G_N(u)$ would follow automatically since $G_N=e^{-F_N}$.
		For $N_1\geq N_2$, we observe that
	\begin{equation*}
		\begin{split}
			|F_{N_1}(u)-F_{N_2}(u)|  \leq & \int_M |\pi_{N_1}u-\pi_{N_2}u|\cdot e^{|\pi_{N_1}u|+|\pi_{N_2}u|}
			\\
			 \leq  &C \|\pi_{N_1}u-\pi_{N_2}u\|_{L^2(M)}\exp\big(\|\pi_{N_1} u\|_{L^\infty(M)} + \|\pi_{N_2} u\|_{L^\infty(M)}\big)\,.
			 \end{split}
		\end{equation*}
		Therefore by Cauchy-Schwarz, 
		$
		\|F_{N_1}(u)-F_{N_2}(u)\|_{L^1(d\mu)}
		$
		is bounded by
		$$C  \big\| \|\pi_{N_1}u-\pi_{N_2}u\|_{L^2(M)}\big\|_{L^2(d\mu)}
		\|\exp\big(\|\pi_{N_1} u\|_{L^\infty(M)} + \|\pi_{N_2} u\|_{L^\infty(M)}\big)\|_{L^2(d\mu)}\, .	
		$$
		The first factor is clearly  going to zero as $N_1,N_2$ go to infinity. 
		One can show that the second factor is uniformly bounded, exactly as in the proof of the  uniform  boundedness of $\|F_N(u)\|_{L^p(d\mu)}$ .
		Therefore  $(F_N(u))$ is a Cauchy sequence in $L^1(d\mu)$ which implies its convergence in mesure.  This in turn implies the convergence in measure of the sequence $(G_N(u))$ ($G_N$ is a continuous function of $F_N$). 
		\\
		
		(2) To show that $\int_Me^{u}$ is almost surely finite, it will be sufficient to verify that
		$$ \mathbb{E}_{\mu}\left[\int_M e^{u} \right]<\infty.
		$$
		From the proof of (1), 
		$\mathbb{E}_{\mu}[e^{\|\pi_Nu\|_{L^{\infty}(M)}}] 
		$ is uniformly bounded in $N$. Thus we conclude by the dominated convergence.
		\\
		
		(3)  It will be sufficient to check that, for all Borel set $A\subset H^{\sigma}(M)$, we have
		$$ \lim_{N\rightarrow\infty}\int_{H^{\sigma}(M)}\mathbf{1}_{u\in A}|G_N(u)-G(u)|d\mu(u)=0.
		$$
		This is a simple consequence of $L^2(d\mu(u))$ convergence, since 
		$$ \int_{H^{\sigma}(M)}\mathbf{1}_{u\in A}|G_N(u)-G(u)|d\mu(u)\leq \|G_N(u)-G(u)\|_{L^2(d\mu)}\mu(A)^{1/2}\rightarrow 0,
		$$
		as $N\rightarrow\infty$.
		This completes the proof of Proposition~\ref{Gibbs_definition}.
	\end{proof}
	\section{Probabilistic local well posedness}
	\subsection{Deterministic local well-posedness result}Consider the following truncated version of \eqref{sys}
	\begin{equation}\label{sys_N}
	\partial_t u=v, \quad \partial_t v=-D^{2\alpha} u-\pi_{N} (e^{\pi_N u}),
	\end{equation}
	with initial data 
	\begin{equation}\label{uu}
	u|_{t=t_0}=u_0,\quad v|_{t=t_0}=v_0.
	\end{equation}
 Let us next define the free evolution.
	The solution of 
	\begin{equation*}
		\partial_t u=v, \quad \partial_t v=-D^{2\alpha} u,
	\end{equation*}
	subject to initial data
	$$
	u(0,x)=u_0(x), \quad v(0,x)=v_0(x)
	$$
	is given by
	$$
	\bar{S}(t)(u_0, v_0) =(S(t)(u_0,v_0), \partial_t S(t)(u_0,v_0)),
	$$ 
	where
	$$
	S(t)(u_0,v_0) =\cos(tD^\alpha)u_0+D^{-\alpha}\sin(tD^\alpha)v_0
	$$
	and
	$$
	\partial_t S(t)(u_0,v_0) =-D^\alpha\sin(tD^\alpha)u_0+\cos(tD^\alpha)v_0\,.
	$$
	Assume that $\epsilon_0\ll 1$, $1\ll r_0<\infty$ such that $$\frac{d}{r_0}<\epsilon_0<\alpha-\frac{d}{2},
	$$ 
	which is possible since $\alpha>\frac{d}{2}$.  In order to establish the probabilistic local well-posednesss as well as globalising the dynamics, we need some auxillary functional spaces $\mathcal{X}^{s,\beta}, \mathcal{Y}^{\beta}, \mathcal{Z}^{\beta}$, defined via the norms
	$$ 
	\|(u,v)\|_{\mathcal{Y}^{\beta}}:=\sum_{l\in\Z}(1+|l|)^{-\beta}\sup_{l\leq t<l+1}\|S(t)(u,v)\|_{W^{\epsilon_0,r_0}(M)},
	$$
	$$ 
	\|(u,v)\|_{\mathcal{X}^{s,\beta}}:=\|(u,v)\|_{\mathcal{H}^s}+\|(u,v)\|_{\mathcal{Y}^{\beta}}
	$$
	and
	$$ 
	\|(u,v)\|_{\mathcal{Z}^{\beta}}:=\sum_{l\in\Z}(1+|l|)^{-\beta}\sup_{l\leq t\leq l+1}\|S(t)(u,v)\|_{L^{\infty}(M)}.
	$$
	Note that $$\frac{\alpha-s_0}{d}=\frac{1}{2}-\frac{1}{r_0} \textrm{ with } s_0=\alpha-\frac{d}{2}+\frac{d}{r_0}>\epsilon_0,$$
	thus $H^{\alpha}(M)\subset W^{\epsilon_0,r_0}(M)\subset L^{\infty}(M)$ (with continuous inclusions). Moreover, 
 $\mathcal{Y}^{\beta}\subset \mathcal{Z}^{\beta}$. 
 The definition of these weighted in time spaces $\mathcal{Y}^{\beta}, \mathcal{Z}^{\beta}$ is inspired by the work of N.~Burq and the second author (a similar definition appears in \cite{BT-JEMS}). The weight $\beta>1$ will be fixed in the sequel to ensure the $l^1$ summation, without other importance. These norms are only designed to treat the linear evolution part of the solution, since unlike \cite{BTT-AIF}, the linear evolution is not periodic in time.

	Denote by $\Phi_N(t)$ the flow of the truncated equation \eqref{sys_N}-\eqref{uu}. In components, we write 
 $$\Phi_N(t)(u_{0},v_{0})=(\Phi^1_N(t)(u_0,v_0),\Phi^2_N(t)(u_0,v_0))$$ with $\Phi_N^1(t)(u_0,v_0)=u_N(t), \Phi^2_N(t)(u_0,v_0)=\partial_tu_N(t)$. Similarly, we denote by
 $$\Phi(t)(u_0,v_0)=(\Phi^1(t)(u_0,v_0), \Phi^2(t)(u_0,v_0)  ),$$ where $\Phi^1(t)(u_0,v_0)=u(t), \Phi^2(t)(u_0,v_0)=\partial_tu(t)$ for solutions of the non truncated equation 
 $$  \partial_t^2u+D^{2\alpha}u+e^u=0, \quad (u,\partial_tu)|_{t=0}=(u_0,v_0).
 $$
 We also denote the nonlinear evolution part by
	$$ W(t)(u_0,v_0):=\Phi^1(t)(u_0,v_0)-S(t)(u_0,v_0),$$
	$$
	W_N(t)(u_{0,N},v_{0,N}):=\Phi_N^1(t)(u_{0,N},v_{0,N})-S(t)(u_{0,N},v_{0,N}).
	$$

	The next proposition contains the local theory for \eqref{sys_N} and the original system \eqref{sys}.
	\begin{proposition}\label{local_theory}
		Let $N \in \N \cup \{\infty\}$ and $\beta>1$. There exist $c>0$ and $\kappa>0$ such that the following holds true.
		The Cauchy problems  \eqref{sys_N}-\eqref{uu} and \eqref{sys}  are locally well-posed for data $(u_0,v_0)$ such that 
		$S(t)(u_0,v_0)\in L^\infty(M)$.  More precisely for every $R\geq 1$ if $(u_0,v_0)$ satisfies 
		\begin{equation}\label{bound}
		\|(u_0,v_0)\|_{\mathcal{Z}^{\beta}}<R,
		\end{equation}
		then there is a unique solution of  \eqref{sys_N}-\eqref{uu}  on $[t_0-\tau,t_0+\tau]$, where 
		\begin{equation}\label{time}
		\tau=ce^{-\kappa R}
		\end{equation}
		which can be written as 
		$$
		(u,v)=\bar{S}(t-t_0)(u_0,v_0)+(\tilde{u},\tilde{v}),
		$$
		with 
		$$
		\|(\tilde{u},\tilde{v})\|_{{\mathcal H^{\alpha}}}\leq C\,.
		$$
		In particular, the Cauchy problems \eqref{sys_N}-\eqref{uu} and \eqref{sys}  are locally well-posed for data $(u_0,v_0)\in \mathcal{X}^{\sigma,\beta}$, for all $t\in[t_0-\tau,t_0+\tau]$, provided that $\sigma<\alpha-\frac{d}{2}$.
	\end{proposition}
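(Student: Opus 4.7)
The plan is to run a standard contraction argument for the Duhamel formulation of the nonlinear part of the solution, treating all $N\in\N\cup\{\infty\}$ uniformly (with the convention $\pi_\infty={\rm Id}$). Writing
\[
(u(t),v(t))=\bar{S}(t-t_0)(u_0,v_0)+(\tilde u(t),\tilde v(t)),
\]
the pair $(\tilde u,\tilde v)$ solves the inhomogeneous linear system with zero data at $t=t_0$ and forcing $F(s,\tilde u):=-\pi_N\bigl(e^{\pi_N(S(s-t_0)(u_0,v_0)+\tilde u(s))}\bigr)$, hence by Duhamel
\[
\tilde u(t)=\int_{t_0}^{t}D^{-\alpha}\sin((t-s)D^\alpha)F(s,\tilde u)\,ds,\qquad \tilde v(t)=\int_{t_0}^{t}\cos((t-s)D^\alpha)F(s,\tilde u)\,ds.
\]
I would solve this in the closed ball
\[
B:=\bigl\{(\tilde u,\tilde v)\in C([t_0-\tau,t_0+\tau];\mathcal H^\alpha):\,\|(\tilde u,\tilde v)\|_{L^\infty_t\mathcal H^\alpha}\leq 1\bigr\}
\]
for some $\tau\leq 1$ to be chosen.

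The necessary ingredients are: (i) the operators $D^{-\alpha}\sin(tD^\alpha):L^2(M)\to H^\alpha(M)$ and $\cos(tD^\alpha):L^2(M)\to L^2(M)$ have norms bounded uniformly in $t\in\R$; (ii) $\pi_N$ is bounded on every Sobolev space uniformly in $N\in\N\cup\{\infty\}$; (iii) since $\alpha>d/2$, the Sobolev embedding $H^\alpha(M)\hookrightarrow L^\infty(M)$ yields $\|\tilde u(s)\|_{L^\infty(M)}\leq C$ for $(\tilde u,\tilde v)\in B$; (iv) since $\tau\leq 1$, the definition of $\mathcal Z^\beta$ together with \eqref{bound} gives $\sup_{|s-t_0|\leq\tau}\|S(s-t_0)(u_0,v_0)\|_{L^\infty(M)}\leq C_\beta R$. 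Combining (iii) and (iv) with monotonicity of $e^x$ one gets
\[
\|F(s,\tilde u)\|_{L^2(M)}\leq {\rm vol}(M)^{1/2}\,e^{C_\beta R+C}\qquad\text{for all } s\in[t_0-\tau,t_0+\tau],\ (\tilde u,\tilde v)\in B.
\]
Applying (i) and (ii) to the Duhamel formulas,
\[
\|(\tilde u,\tilde v)\|_{L^\infty_t\mathcal H^\alpha}\leq C\tau\,{\rm vol}(M)^{1/2}e^{C_\beta R+C},
\]
so the choice $\tau=ce^{-\kappa R}$ with $\kappa$ large enough keeps $B$ invariant. For the contraction, the elementary inequality $|e^a-e^b|\leq(e^{|a|}+e^{|b|})|a-b|$ together with the uniform $L^\infty$ bound on $B$ and the continuous embedding $H^\alpha(M)\hookrightarrow L^2(M)$ gives
\[
\|F(s,\tilde u_1)-F(s,\tilde u_2)\|_{L^2(M)}\leq C\,e^{C_\beta R+C}\,\|(\tilde u_1,\tilde v_1)(s)-(\tilde u_2,\tilde v_2)(s)\|_{\mathcal H^\alpha},
\]
so, after possibly enlarging $\kappa$, the map is a strict contraction on $B$ in the $L^\infty_t\mathcal H^\alpha$ metric. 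The unique fixed point is the desired solution $(\tilde u,\tilde v)$ on $[t_0-\tau,t_0+\tau]$ with $\|(\tilde u,\tilde v)\|_{\mathcal H^\alpha}\leq 1$.

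The last assertion follows from $\mathcal X^{\sigma,\beta}\hookrightarrow\mathcal H^\sigma$ and $\mathcal Y^\beta\subset\mathcal Z^\beta$: any $(u_0,v_0)\in\mathcal X^{\sigma,\beta}$ satisfies \eqref{bound} with $R:=\|(u_0,v_0)\|_{\mathcal Z^\beta}+1$, and the decomposition $(u,v)=\bar S(t-t_0)(u_0,v_0)+(\tilde u,\tilde v)$ then lies in $C([t_0-\tau,t_0+\tau];\mathcal H^\sigma)$ since $\mathcal H^\alpha\hookrightarrow\mathcal H^\sigma$. The main obstacle, and the only non-routine feature, is the exponential time loss $\tau\sim e^{-\kappa R}$: this is forced by the exponential nonlinearity itself, since an $L^\infty$ bound of size $R$ on the free evolution produces a pointwise bound of size $e^R$ on $e^u$, which no amount of Sobolev regularity can compensate. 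Consequently, globalisation of the dynamics later in the paper must control the $\mathcal Z^\beta$ norm of the random linear evolution on arbitrarily long time intervals via a large-deviation estimate, which is the role of Proposition~\ref{LD}.
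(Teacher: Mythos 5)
Your proposal is correct and follows essentially the same route as the paper: decompose $(u,v)=\bar S(t-t_0)(u_0,v_0)+(\tilde u,\tilde v)$, run a contraction for $\tilde u$ in the unit ball of $C(I;\mathcal H^\alpha)$, use the Sobolev embedding $H^\alpha\hookrightarrow L^\infty$ for the nonlinear part and the $\mathcal Z^\beta$ bound for the free evolution, and accept the exponential loss $\tau\sim e^{-\kappa R}$ forced by $e^u$. One small imprecision: your ingredient (ii) should read ``$\pi_N$ is bounded on $L^\infty(M)$ uniformly in $N$'' (as the paper notes, citing \cite{BGT}); boundedness on $H^s$ is immediate since $\pi_N$ is a spectral multiplier, but it is the $L^\infty$ bound that is actually invoked to control $\|\pi_N(S(\cdot)(u_0,v_0)+\tilde u)\|_{L^\infty(M)}$ inside the exponential.
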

	\begin{proof}
		We can write  \eqref{sys_N}-\eqref{uu}  as
		\begin{equation*}
			(u(t),v(t))=\bar{S}(t-t_0)(u_0,v_0)+\big(F_1(u),F_2(u)\big),
		\end{equation*}
		where
		\begin{align*}
			F_1(u)& =-\int_{t_0}^t D^{-\alpha}\sin((t-\tau)D^\alpha)\, \pi_N\big(e^{\pi_N u(\tau)} \big)d\tau,\\
			F_2(u)& =-\int_{t_0}^t \cos((t-\tau)D^\alpha)\, \pi_N\big( e^{\pi_N u(\tau)}\big) d\tau.
		\end{align*}
		If we write $(u,v)=\bar{S}(t-t_0)(u_0,v_0)+(\tilde{u},\tilde{v})$, we obtain that $(\tilde{u},\tilde{v})$ solves 
		$$
		\partial_t \tilde{u}=\tilde{v}, \quad \partial_t \tilde{v}=-D^{2\alpha} \tilde{u}-\pi_{N} (e^{\pi_N (\tilde{u}+S(t)(u_0,v_0))})
		$$
		with zero initial data.  Therefore $\tilde{u}$ solves  
		\begin{equation}\label{int_eq}
		\tilde{u}(t)=F_1(\tilde{u}+S(t-t_0)(u_0,v_0) ) \,.
		\end{equation}
		Once we solve \eqref{int_eq}, we recover $\tilde{v}$ by  $\tilde{v}= \partial_t \tilde{u}$.
		Define the map $\Phi_{u_0,v_0}$ by
		$$
		\Phi_{u_0,v_0}(u):=F_1(u+S(t-t_0)(u_0,v_0) )\,.
		$$
		It follows from the definition that 
		\begin{equation}\label{bgt}
		\begin{split}
			\|\Phi_{u_0,v_0}(u)\| _{L^\infty([t_0, t_0+T]; H^\alpha (M))} \leq  &CT\|e^{\pi_N (u+S(t-t_0)(u_0,v_0))}\|_{L^\infty(I;L^2(M))}
			\\
			\leq 
			&CT({\rm vol}(M))^{\frac{1}{2}}\|e^{\pi_N (u+S(t-t_0)(u_0,v_0))}\|_{L^\infty(I;L^\infty(M))}\,
		\end{split}
		\end{equation}
			where $I=[t_0,t_0+T]$. Since $\pi_N$ is bounded on $L^\infty$ (see e.g. \cite{BGT}), a use of the Sobolev embedding $H^\alpha(M)\subset L^\infty(M)$  yields the estimate 
		$$
		\|\Phi_{u_0,v_0}(u)\| _{L^\infty([I;H^\alpha (M))} \leq CT e^{C\|S(t-t_0)(u_0,v_0))\|_{L^\infty(I\times M)}}e^{\|u\|_{L^\infty(I;H^\alpha(M))}}\,.
		$$
	  Note that
		$$ \|S(t-t_0)(u_0,v_0)\|_{L^{\infty}(I\times M)}\leq C\|(u_0,v_0)\|_{\mathcal{Z}^{\beta}},
		$$
		hence under \eqref{bound} we deduce that 
		\begin{equation}\label{contr1}
		\|\Phi_{u_0,v_0}(u)\| _{L^\infty(I;H^\alpha (M))} \leq CT e^{CR}e^{\|u\|_{L^\infty(I;H^\alpha(M))}}\,.
		\end{equation}
		Similarly we obtains that
		\begin{equation}\label{contr2}
		\begin{split}
		&	\|\Phi_{u_0,v_0}(u)-\Phi_{u_0,v_0}(v)\| _{L^\infty(I;H^\alpha (M))} 
			\\
			\leq &CT e^{CR}\exp\big(\|u\|_{L^\infty(I;H^\alpha(M))}+\|v\|_{L^\infty(I;H^\alpha(M))}\big)\|u-v\|_{L^\infty(I;H^\alpha(M))}\,.
		\end{split}
		\end{equation}
		Define the space $X_T$ as
		$$
		X_T=\{u\in C(I;H^\alpha(M))\,:\, \|u\|_{L^\infty(I;H^\alpha(M))}\leq 1\}.
		$$
		Using \eqref{contr1}, we obtain that for $T$ as in \eqref{time} the map $\Phi_{u_0,v_0}$ enjoys the property 
		$$
		\Phi_{u_0,v_0}(X_T)\subset X_T.
		$$ 
		Under the same restriction on $T$, thanks to \eqref{contr2}, we obtain that the map  $\Phi_{u_0,v_0}$ is a contraction on $X_T$. 
		The fixed point of this contraction is the solution of \eqref{int_eq}. 
		This completes the proof of Proposition~\ref{local_theory}.
	\end{proof}
In the same spirit of the proof, we establish a local convergence result, which will be needed to construct global dynamics in Section~4.	
		\begin{lemme}\label{localconvergence}
		Assume that $0<\sigma<\alpha-\frac{d}{2}$ and $\beta>1$. There exist $R_0>0, c>0, \kappa>0$ such that the following holds true. Consider a sequence $(u_{0,N_p},v_{0,N_p})\in E_{N_p}\times E_{N_p}$ and $(u_0,v_0)\in \mathcal{X}^{\sigma}$, with $N_p\rightarrow \infty$. Assume that there exists $R>R_0$ such that
		$$\|(u_{0,N_p},v_{0,N_p})\|_{\mathcal{X}^{\sigma,\beta}}\leq R,\quad \|(u_0,v_0)\|_{\mathcal{X}^{\sigma,\beta}}\leq R, $$ and
		$$ \lim_{p\rightarrow\infty}\|\pi_{N_p}(u_{0,N_p},v_{0,N_p})-(u_0,v_0)\|_{\mathcal{H}^{\sigma}}=0.
		$$ 
		Then if we set $\tau=ce^{-\kappa R}$, the flow $\Phi_{N_p}(t)(u_{0,N_0},v_{0,N_p}), \Phi(t)(u_0,v_0)$ exist for $t\in[-\tau,\tau]$ and satisfy
		$$ \|\Phi_{N_p}(t)(u_{0,N_p},v_{0,N_p})\|_{L^{\infty}([-\tau,\tau];\mathcal{X}^{\sigma,\beta})}\leq R+1, \quad \|\Phi(t)(u_0,v_{0,})\|_{L^{\infty}([-\tau,\tau];\mathcal{X}^{\sigma,\beta})}\leq R+1.
		$$
		Furthermore,
		$$ \lim_{p\rightarrow\infty}\|\pi_{N_p}\Phi_{N_p}(t)(u_{0,N_p},v_{0,N_p})-\Phi(t)(u_0,v_0)\|_{L^{\infty}([-\tau,\tau];\mathcal{H}^{\sigma})}=0,
		$$
		and
		$$ \lim_{p\rightarrow\infty}\|\pi_{N_p}W_{N_p}(t)(u_{0,N_p},v_{0,N_p})-W(t)(u_0,v_0)\|_{L^{\infty}([-\tau,\tau];H^{\alpha}(M))}=0.
		$$
	\end{lemme}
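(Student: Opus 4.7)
The plan is to recycle and refine the contraction argument of Proposition~\ref{local_theory}, uniformly in the sequence $(u_{0,N_p},v_{0,N_p})$, and then to promote the hypothesis $\pi_{N_p}(u_{0,N_p},v_{0,N_p})\to(u_0,v_0)$ in $\mathcal{H}^\sigma$ into a convergence of the full nonlinear flows. First, possibly shrinking $c$ in $\tau=ce^{-\kappa R}$, I would arrange that $\Phi_{u_0,v_0}$ is a $1/2$-contraction on the ball $\{\|\tilde u\|_{L^\infty([-\tau,\tau];H^\alpha)}\leq\eta\}$ for a fixed small $\eta$, and apply this simultaneously to each $(u_{0,N_p},v_{0,N_p})$ and to $(u_0,v_0)$; this yields $\Phi_{N_p}(t)$ and $\Phi(t)$ on $[-\tau,\tau]$ together with nonlinear parts $W_{N_p},W$ whose $L^\infty_t H^\alpha$ norms are at most $\eta$. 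The bound $\|\Phi_{N_p}(t)(u_{0,N_p},v_{0,N_p})\|_{\mathcal{X}^{\sigma,\beta}}\leq R+1$ then follows from the decomposition $\Phi_{N_p}(t)=\bar S(t)(u_{0,N_p},v_{0,N_p})+(W_{N_p}(t),\partial_tW_{N_p}(t))$: on the $\mathcal{H}^\sigma$ side, $\bar S(t)$ is an isometry on $\mathcal{H}^\sigma$ and $H^\alpha\hookrightarrow H^\sigma$ takes care of the nonlinear part; on the $\mathcal{Y}^\beta$ side, the group law $S(s)\bar S(t)=S(s+t)$ combined with an elementary shift estimate on the weights $(1+|l|)^{-\beta}$ (valid since $|t|\leq\tau<1$) controls the free-evolution contribution, while $H^\alpha\hookrightarrow W^{\epsilon_0,r_0}$ handles the nonlinear one.

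The core step is $\pi_{N_p}W_{N_p}\to W$ in $L^\infty([-\tau,\tau];H^\alpha)$. Setting
\[
A_p(s)=\pi_{N_p}\bigl(W_{N_p}(s)+S(s)(u_{0,N_p},v_{0,N_p})\bigr),\qquad A(s)=W(s)+S(s)(u_0,v_0),
\]
and subtracting the two Duhamel formulas, the $L^2\to H^\alpha$ bound on $D^{-\alpha}\sin((t-s)D^\alpha)$ gives
\[
\|\pi_{N_p}W_{N_p}(t)-W(t)\|_{H^\alpha}\leq C\int_0^{|t|}\|\pi_{N_p}^2\,e^{A_p(s)}-e^{A(s)}\|_{L^2}\,ds.
\]
I would split the integrand as $\pi_{N_p}^2(e^{A_p}-e^A)+(\pi_{N_p}^2-\mathrm{Id})e^{A}$. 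The second piece tends to zero in $L^\infty_sL^2$ by dominated convergence, since $\pi_{N_p}\to\mathrm{Id}$ strongly on $L^2$ and $e^A\in L^\infty_sL^2$. For the first piece I would use $|e^a-e^b|\leq|a-b|(e^a+e^b)$ together with H\"older, noting that both $\|A_p\|_{L^\infty_{s,x}}$ and $\|A\|_{L^\infty_{s,x}}$ are bounded by $CR$, thanks to $\mathcal{Y}^\beta\hookrightarrow\mathcal{Z}^\beta$ for the linear parts (this is where the assumption $\|(u_{0,N_p},v_{0,N_p})\|_{\mathcal{Y}^\beta}\leq R$ is crucial) together with $H^\alpha\hookrightarrow L^\infty$ on $W_{N_p},W$. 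Using $\pi_{N_p}S(s)=S(s)\pi_{N_p}$ I then write
\[
A_p-A=(\pi_{N_p}W_{N_p}-W)+S(s)\bigl[\pi_{N_p}(u_{0,N_p},v_{0,N_p})-(u_0,v_0)\bigr],
\]
so that by boundedness of $\bar S(s)$ on $\mathcal{H}^\sigma$ and the embedding $H^\sigma\hookrightarrow L^2$ the second summand tends to $0$ in $L^\infty_sL^2$. Absorbing $\|\pi_{N_p}W_{N_p}-W\|_{L^2}\leq\|\pi_{N_p}W_{N_p}-W\|_{H^\alpha}$ into the left-hand side and using $\tau e^{CR}\leq 1/2$ (enforced by $\kappa$ large), the bootstrap closes and delivers the claim.

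The $\mathcal{H}^\sigma$-convergence of $\pi_{N_p}\Phi_{N_p}-\Phi$ then follows by decomposing the first component as
\[
S(t)\bigl[\pi_{N_p}(u_{0,N_p},v_{0,N_p})-(u_0,v_0)\bigr]+\bigl[\pi_{N_p}W_{N_p}(t)-W(t)\bigr],
\]
the first summand vanishing in $H^\sigma$ by hypothesis and boundedness of $\bar S(t)$ on $\mathcal{H}^\sigma$, the second by the previous step combined with $H^\alpha\hookrightarrow H^\sigma$; the time-derivative component is handled identically. The main obstacle I anticipate is coupling the various constants in the bootstrap: one must simultaneously absorb the Lipschitz constant inherited from Proposition~\ref{local_theory}, the exponential blow-up $e^{CR}$ from the pointwise estimate on $e^{A_p}-e^A$, and the shift-constant in the $\mathcal{Y}^\beta$-preservation, while choosing a single $\kappa$ that works uniformly for all $R>R_0$.
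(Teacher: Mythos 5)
Your proof takes essentially the same route as the paper's: decompose each flow into free plus Duhamel part, compare the Duhamel terms using the Lipschitz estimate on the exponential together with the $L^\infty$ control furnished by $\mathcal{Y}^\beta\hookrightarrow\mathcal{Z}^\beta$ and $H^\alpha\hookrightarrow L^\infty$, and close via Gronwall/absorption; the tail $\pi_{N_p}^\perp$ is dispatched by boundedness of $e^{w+S}$ in $L^\infty_tL^2$ and strong convergence of the smooth projector. The only deviations are bookkeeping: you combine what the paper splits into three contributions (I, II, III) into two (via $A_p-A$), use $\pi_{N_p}^2$ explicitly in place of the paper's single $\pi_{N_p}$, and run an absorption argument in place of Gronwall — all harmless. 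Both your argument and the paper's gloss over the same minor point, namely that the $\mathcal{Y}^\beta$ bound $\leq R+1$ for $\Phi_{N_p}(t)$ really requires the small time-shift estimate (and in truth yields a bound $C_0R+C_1$ for absolute constants rather than literally $R+1$), but that does not affect the conclusion.
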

	
	\begin{proof}
		The existence of $\Phi_{N_p}(t)(u_{0,N_p},v_{0,N_p})$ and $\Phi(t)(u_0,v_0)$ as well as the bound on $[-\tau,\tau]$ are guaranteed by the local well-posedness result, Proposition~\ref{local_theory}. We only need to prove the convergence.
		
		Denote by
		$$ (u_p(t),\partial_tu_p(t))=\Phi_{N_p}(t)(u_{0,N_p},v_{0,N_p}),\quad (u(t),\partial_t u(t))=\Phi(t)(u_0,v_0).
		$$
		From local theory, we can write
		$$ u_p(t)=S(t)(u_{0,N_p},v_{0,N_p})+w_p(t),\quad u(t)=S(t)(u_0,v_0)+w(t),
		$$
		such that 
		$$ \|(w_p,\partial_tw_p)\|_{L^{\infty}([0,\tau];\mathcal{H}^{\alpha})}\leq R+1, \quad \|(w,\partial_tw)\|_{L^{\infty}([0,\tau];\mathcal{H}^{\alpha})}\leq R+1.
		$$ 
		The convergence of the linear part
		$$ \|\pi_{N_p}\overline{S}(t)(u_{0,N_p},v_{0,N_p})-\overline{S}(t)(u_0,v_0)\|_{L^{\infty}([0,\tau];\mathcal{H}^{\alpha})}=0
		$$
		follows from the assumption and the boundedness of $\pi_{N_p}$ on $W^{\epsilon_0,r_0}(M)$.
		
		Next we estimate the nonlinear part 
		$$ \|\pi_{N_{p}}w_p(t)-w(t)\|_{H^{\alpha}(M)}.
		$$
		Writing $w_p,w$ by the Duhamel formula and using triangle inequality,  we can bound the quantity above by the three contributions:
		\begin{equation*}
			\begin{split}
				&\mathrm{I}(t):=\int_0^t\left\|e^{\pi_{N_p}(w_p(t')+S(t')(u_{0,N_p},v_{0,N_p}))}-e^{\pi_{N_p}(w(t')+S(t')(u_0,v_{0}))}\right\|_{L^2(M)}dt',\\
				&\mathrm{II}(t):=\int_0^t\left\|e^{w(t')+S(t')(u_0,v_0)}-e^{\pi_{N_p}(w(t')+S(t')(u_0,v_0))}\right\|_{L^2(M)}dt',\\
				&\mathrm{III}(t):=\int_0^t\left\|\pi_N^{\perp}e^{w(t')+S(t')(u_0,v_0)}\right\|_{L^2(M)}dt'.
			\end{split}
		\end{equation*}
		Since $$e^{w(t)+S(t)(u_0,v_0)}\in L^{\infty}([-\tau,\tau]\times M),$$  we have that $$\|\mathrm{III}\|_{L^{\infty}([-\tau,\tau])}=o_{p\rightarrow\infty}(1).$$
		For $\mathrm{I}(t)$ and $\mathrm{II}(t)$, we can bound them by
		$$ C(R,\tau)\left(o_{p\rightarrow\infty}(1)+\int_0^t\|\pi_{N_p}w_p(t')-w(t')\|_{L^2(M)}dt'\right).
		$$
		By applying Gronwall inequality, the proof of Lemma \ref{localconvergence} is complete.
	\end{proof}

	\subsection{Large deviation estimate for linear evolutions }
	Proposition~\ref{local_theory} is deterministic. The probabilistic part of the analysis comes from the following statement. 
	\begin{proposition}\label{LD}
		Assume that $\beta>1$.	There are positive constants $C$ and $c$ such that for every $R\geq 1$,
		\begin{equation*}
			\mu\big((u_0,v_0)\,:\,  \|(u_0,v_0)\|_{\mathcal{Y}^{\beta}}\geq R\big)\leq C\, e^{-cR^2}.
		\end{equation*}
		As a consequence, we also have a similar bound for the Gibbs measure 
		\begin{equation*}
			\rho\big((u_0,v_0)\,:\,  \|(u_0,v_0)\|_{\mathcal{Y}^{\beta}}\geq R\big)\leq C\, e^{-cR^2}.
		\end{equation*}
	\end{proposition}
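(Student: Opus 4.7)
The plan is to establish the uniform moment bound
\[
\bigl\|\,\|(u^\omega,v^\omega)\|_{\mathcal{Y}^\beta}\bigr\|_{L^p(\Omega)} \leq C\sqrt{p}\qquad (p\geq p_0),
\]
with $C$ independent of $p$. Chebyshev's inequality, optimised at $p\sim R^2$, converts this into $\mu(\|\cdot\|_{\mathcal{Y}^\beta}\geq R)\leq Ce^{-cR^2}$. The second claim is then free: since $d\rho=e^{-\int_M e^u}d\mu\leq d\mu$, one has $\rho\leq \mu$ as measures and the tail bound transfers unchanged.

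Set $Y_l(\omega):=\sup_{t\in[l,l+1]}\|S(t)(u^\omega,v^\omega)\|_{W^{\epsilon_0,r_0}(M)}$. A direct computation shows that $\bar S(l)$ acts on each Gaussian pair $(g_n,h_n)$ as a rotation, so the joint law of the coefficients is preserved; consequently the $Y_l$ are identically distributed. Using Minkowski in $L^p(\Omega)$ together with $\sum_l(1+|l|)^{-\beta}<\infty$ (this is where $\beta>1$ enters), the problem reduces to $\|Y_0\|_{L^p(\Omega)}\leq C\sqrt p$. To remove the supremum in $t$ I would invoke the vector-valued Sobolev embedding $W^{\gamma,p}_t(0,1)\hookrightarrow L^\infty_t(0,1)$, valid when $\gamma p>1$, and then swap $L^p(\Omega)$ with $L^p_t L^{r_0}_x$ by Minkowski (legitimate for $p\geq r_0$). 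One is then left to bound, pointwise in $(t,x)$, the Gaussian random variable
\[
\partial_t^\gamma D^{\epsilon_0}S(t)(u^\omega,v^\omega)(x)=\sum_{n}\langle\lambda_n\rangle^{\epsilon_0+\gamma\alpha-\alpha}\bigl[g'_n\cos(t\langle\lambda_n\rangle^\alpha)+h'_n\sin(t\langle\lambda_n\rangle^\alpha)\bigr]\varphi_n(x),
\]
where $(g'_n,h'_n)$ is the rotation of $(g_n,h_n)$ that absorbs the phase shift introduced by the fractional derivative. Using $\cos^2+\sin^2=1$, its pointwise variance is the \emph{time-independent}
\[
\sigma(x)^2=\sum_{n}\langle\lambda_n\rangle^{-2(\alpha-\gamma\alpha-\epsilon_0)}\varphi_n(x)^2,
\]
which is exactly the on-diagonal Schwartz kernel of $(1-\Delta_g)^{-(\alpha-\gamma\alpha-\epsilon_0)}$. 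A standard spectral/heat-kernel computation gives $\sigma\in L^\infty(M)$ whenever $\alpha-\gamma\alpha-\epsilon_0>d/2$. The classical Gaussian moment inequality $\|X\|_{L^p(\Omega)}\leq C\sqrt p\,\|X\|_{L^2(\Omega)}$ applied pointwise then closes the $\sqrt p$ bound.

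The main obstacle is balancing the two constraints on $\gamma$: Sobolev-in-time requires $\gamma>1/p$, while pointwise boundedness of $\sigma$ requires $\gamma<(\alpha-d/2-\epsilon_0)/\alpha$. The assumption $\epsilon_0<\alpha-d/2$ is precisely what makes the latter upper bound strictly positive, so one first fixes any $\gamma$ in the resulting open interval and only afterwards takes $p$ large enough to satisfy simultaneously $\gamma p>1$ and $p\geq r_0$. With this choice every constant is uniform in $(l,t)$, giving the required $\|Y_0\|_{L^p(\Omega)}\leq C\sqrt p$, and the proposition follows. The same scheme, carried out with $\gamma=0$ and $L^{r_0}$ replaced by $L^\infty$ (using Sogge-type $L^\infty$ spectral cluster bounds), yields Proposition~\ref{ho} as a byproduct.
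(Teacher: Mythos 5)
Your overall architecture matches the paper's: reduce to a uniform $L^p(\Omega)$ moment bound $\|\,\|\cdot\|_{\mathcal{Y}^\beta}\|_{L^p(\Omega)}\leq C\sqrt p$, sum over $l$ using $\beta>1$, trade the time--supremum for a fractional Sobolev norm in $t$, swap $L^p(\Omega)$ and $L^q_t L^{r_0}_x$ by Minkowski, apply Khinchin/Gaussian moment bounds pointwise, and finish with the Weyl-type bound $\sum_{N\leq\langle\lambda_n\rangle\leq 2N}|\varphi_n(x)|^2\lesssim N^d$ from H\"ormander. Your observation that $\bar S(l)$ acts on $(g_n,h_n)$ as a rotation and hence the $Y_l$ are equidistributed is a clean way of phrasing what the paper dismisses with ``for other $l$ the arguments are the same.''

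There is, however, a genuine gap at the step where you assert
\[
\partial_t^\gamma D^{\epsilon_0}S(t)(u^\omega,v^\omega)(x)=\sum_{n}\langle\lambda_n\rangle^{\epsilon_0+\gamma\alpha-\alpha}\bigl[g'_n\cos(t\langle\lambda_n\rangle^\alpha)+h'_n\sin(t\langle\lambda_n\rangle^\alpha)\bigr]\varphi_n(x),
\]
with a \emph{time-independent} pointwise variance. This commutation of $\partial_t^\gamma$ with the oscillating exponentials, producing exactly the factor $\langle\lambda_n\rangle^{\alpha\gamma}$ plus a phase rotation, is only valid for the whole-line (or periodic) fractional derivative acting on a pure exponential. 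But $\sup_{t\in[l,l+1]}$ forces you either to work with the Gagliardo seminorm of $W^{\gamma,p}(0,1)$ (for which there is no such pointwise operator to take a variance of) or to cut off in time by a bump $\eta$ and then compute $D_t^\gamma\bigl(\eta(t)e^{it\langle\lambda_n\rangle^\alpha}\bigr)$, which is \emph{not} $\langle\lambda_n\rangle^{\alpha\gamma}\eta(t)e^{it\langle\lambda_n\rangle^\alpha+i\phi}$: the localisation destroys the clean multiplier behaviour. The actual estimate one needs is that the $L^q_t$ norm of this localised factor is $\lesssim\langle\lambda_n\rangle^{\alpha\gamma}$, which is exactly what the paper proves as Lemma~\ref{fd} via the Besov-difference characterisation, splitting $|\tau|\lessgtr\lambda_n^{-\alpha}$. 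Your proposal silently assumes the conclusion of that lemma (it yields the same weight $\langle\lambda_n\rangle^{-2(\alpha-\gamma\alpha-\epsilon_0)}$), so the final tail bound is correct, but as written the key variance computation rests on an identity that does not hold once the time variable is localised. Supplying Lemma~\ref{fd} (or an equivalent Gagliardo-seminorm estimate) would close the argument.
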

	
	Observe that in the case of an exponential nonlinearity, we need a large deviation estimate for $L^\infty$ norms in time (in \cite{BT-JEMS} only $L^p$ in time for a finite $p$ were established).
	\begin{proof}[Proof of Proposition~\ref{LD}]
		Let $\eta\in C_0^\infty$ be a bump function localising in the interval $[-2,2]$. Denote by $\eta_l(t):=\eta(t-l)$ for $l\in Z$.
		We need to show 
		$$
		\Big\|\sum_{l\in Z}\langle l\rangle^{-\beta}\|\eta_l(t)S(t)(u_0,v_0)\|_{L^{\infty}(\R; W^{\epsilon_0,r_0}(M)}\Big\|_{L^p(d\mu)}\leq C\sqrt{p}\,.
		$$
		Coming back to the definition of $S(t)$, we observe that it suffices to prove the bound 
		$$
		\Big\|\sum_{l\in \Z}\langle l\rangle^{-\beta}\Big\|
		\eta_l(t)\sum_{n=0}^\infty
		\frac{c_n g_n(\omega)}{\langle\lambda_n\rangle^{\alpha}}e^{it\lambda_n^\alpha}\varphi_n(x)
		\Big\|_{L^\infty(\R;W^{\epsilon_0,r_0}(M)
		}\Big\|_{L^p(\Omega)}\leq C\sqrt{p}\,,
		$$
		where $|c_n|$ is bounded. Thanks to $\beta>1$, we have $\sum_{l}\langle l\rangle^{-\beta}<\infty$, thus it suffices to prove the bound
		$$ \left\|\left\|\eta_{l}(t)\sum_{n=0}^{\infty}\frac{c_ng_n(\omega)}{\langle\lambda_n\rangle^{\alpha}}e^{it\lambda_n^{\alpha}}\varphi_n(x)\right\|_{L^{\infty}(\R;W^{\epsilon_0,r_0}(M))}\right\|_{L^p(\Omega)}\leq C\sqrt{p},
		$$
		uniformly in $l\in\Z$.
		
		To simplify the notation, we only write down the case where $l=0$. For other $l$, the arguments are exactly the same.  Using the Sobolev embedding with a large $q$ and a small $\delta$ (depending on $q$) we obtain that its is sufficient to obtain 
		$$
		\Big\|\Big\|
		D_t^{\delta}D^{\epsilon_0}
		\big(\eta(t)\sum_{n=0}^\infty\frac{c_n g_n(\omega)}{\langle\lambda_n\rangle^{\alpha}}e^{it\lambda_n^\alpha}\varphi_n(x)\big)
		\Big\|_{L^q(\R;L^{r_0}（(M)
		}\Big\|_{L^p(\Omega)}\leq C\sqrt{p}\,,
		$$
		where $D_t^\delta=(1-\partial_t^2)^{\delta/2}$. 
		Since $D^{\epsilon_0}\varphi_n=\langle\lambda_n\rangle^{\epsilon_0}\varphi_n$, we are reduced to prove the bound 
		$$
		\Big\|\Big\|
		\sum_{n=0}^\infty\frac{c_n g_n(\omega)}{\langle\lambda_n\rangle^{\alpha-{\epsilon_0}}}
		\alpha_n(t)\varphi_n(x)\Big\|_{L^q(\R;L^{r_0}( M)
		}\Big\|_{L^p(\Omega)}\leq C\sqrt{p}\,,
		$$
		where 
		$
		\alpha_n(t)=D_t^\delta(\eta(t)\exp(it\lambda_n^\alpha)).
		$
		Applying the Minkowski inequality, we deduce that it suffices to prove that for $p\geq q\geq r_0$,
		$$
		\Big\|\Big\|
		\sum_{n=0}^\infty\frac{c_n g_n(\omega)}{\langle\lambda_n\rangle^{\alpha-\epsilon_0}}
		\alpha_n(t)\varphi_n(x)\Big\|_{L^p(\Omega)}
		\Big\|_{L^{r_0}(M; L^{q}(\R))}\leq C\sqrt{p}\,.
		$$
		Observe that in this discussion $q$ is large but fixed and $p$ goes to $\infty$. 
		Now for a fixed $(t,x)$, we can apply the Khinchin inequality and write 
		$$
		\Big\|
		\sum_{n=0}^\infty\frac{c_n g_n(\omega)}{\langle\lambda_n\rangle^{\alpha-\epsilon_0}}
		\alpha_n(t)\varphi_n(x)\Big\|_{L^p(\Omega)}
		\leq 
		C\sqrt{p}
		\Big(
		\sum_{n=0}^\infty
		\frac{|\alpha_n(t)|^2 \, |\varphi_n(x)|^2}{\langle \lambda_n\rangle^{2(\alpha-\epsilon_0)}}
		\Big)^{\frac{1}{2}}\,.
		$$
		Therefore, we reduce the matters to the deterministic bound 
		$$
		\Big\|
		\sum_{n=0}^\infty
		\frac{|\alpha_n(t)|^2 \, |\varphi_n(x)|^2}{\langle \lambda_n\rangle^{2(\alpha-\epsilon_0)}}
		\Big\|_{L^{r_0/2}(M; L^{q/2} (\R)}\leq C\,.
		$$
		For a fixed $x$, we apply the triangle inequality to obtain that 
		\begin{equation}\label{back}
		\Big\|
		\sum_{n=0}^\infty
		\frac{|\alpha_n(t)|^2 \, |\varphi_n(x)|^2}{\langle \lambda_n\rangle^{2(\alpha-\epsilon_0)}}
		\Big\|_{L^{q/2}(\R)}\leq 
		\sum_{n=0}^\infty
		\frac{ |\varphi_n(x)|^2}{\langle \lambda_n\rangle^{2(\alpha-\epsilon_0)}}
		\|\alpha_n\|_{L^q(\R)}^2\,.
		\end{equation}
	
		\begin{lemme}\label{fd}
			There is $C$ such that for every $n\geq 1$,
			$
			\|\alpha_n\|_{L^q(\R)}\leq C\lambda_n^{\alpha\delta}\,.
			$
		\end{lemme}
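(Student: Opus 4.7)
The plan is to compute $\alpha_n$ on the Fourier side in $t$. Since $\mathcal{F}_t[\eta(t)e^{it\lambda_n^\alpha}](\tau) = \hat{\eta}(\tau - \lambda_n^\alpha)$ and the symbol of $D_t^\delta = (1-\partial_t^2)^{\delta/2}$ is $(1+\tau^2)^{\delta/2}$, we can write
$$\alpha_n(t) = \frac{1}{2\pi}\int_{\R} e^{it\tau}(1+\tau^2)^{\delta/2}\hat{\eta}(\tau - \lambda_n^\alpha)\,d\tau.$$
The translation $\tau = \xi + \lambda_n^\alpha$ converts this into
$$\alpha_n(t) = \frac{e^{it\lambda_n^\alpha}}{2\pi}\int_{\R} e^{it\xi}\bigl(1+(\xi + \lambda_n^\alpha)^2\bigr)^{\delta/2}\hat{\eta}(\xi)\,d\xi,$$
so the modulus of $\alpha_n(t)$ equals that of the integral itself; the oscillation in $\lambda_n^\alpha$ disappears.

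The main ingredient is a pointwise multiplier bound. We are in the regime $\delta \in (0,1)$ (since $\delta$ was chosen small in the Sobolev embedding step just above the lemma), so the subadditivity $(A+B)^{\delta/2}\leq A^{\delta/2}+B^{\delta/2}$ applied to $1+(\xi+\lambda_n^\alpha)^2 \leq 1 + 2\xi^2 + 2\lambda_n^{2\alpha}$ yields
$$\bigl(1+(\xi+\lambda_n^\alpha)^2\bigr)^{\delta/2} \leq C(1+\xi^2)^{\delta/2} + C\lambda_n^{\alpha\delta},$$
with $C$ independent of $n$. Substituting into the integral representation gives the pointwise-in-$t$ bound
$$|\alpha_n(t)| \leq C\lambda_n^{\alpha\delta}|\eta(t)| + C|G(t)|,$$
where $G(t) := \mathcal{F}^{-1}\bigl[(1+\xi^2)^{\delta/2}\hat{\eta}(\xi)\bigr](t)$.

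To conclude, I would observe that since $\eta \in C_0^\infty(\R)$ its Fourier transform $\hat{\eta}$ is Schwartz, and multiplication by the polynomially-growing weight $(1+\xi^2)^{\delta/2}$ preserves the Schwartz class; hence $G$ is itself Schwartz in $t$, so $\|G\|_{L^q(\R)}$ is finite and depends only on $\eta$, $\delta$, $q$. Combining this with $\|\eta\|_{L^q(\R)} < \infty$ and $\lambda_n^{\alpha\delta}\geq \lambda_1^{\alpha\delta}>0$ for $n\geq 1$, we get $\|\alpha_n\|_{L^q(\R)}\leq C\lambda_n^{\alpha\delta}$ as claimed. There is essentially no obstacle here: the whole argument is a clean Fourier-multiplier manipulation, and the only step requiring any attention is the elementary multiplier inequality, which is where the smallness $\delta<1$ comes in.
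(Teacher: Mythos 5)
Your Fourier-side reduction is the right start, and the integral representation
$$\alpha_n(t) = \frac{e^{it\lambda_n^\alpha}}{2\pi}\int_{\R} e^{it\xi}\bigl(1+(\xi + \lambda_n^\alpha)^2\bigr)^{\delta/2}\hat{\eta}(\xi)\,d\xi$$
and the multiplier bound $\bigl(1+(\xi+\lambda_n^\alpha)^2\bigr)^{\delta/2} \leq C(1+\xi^2)^{\delta/2} + C\lambda_n^{\alpha\delta}$ are both correct. But the very next step — ``substituting into the integral representation gives the pointwise-in-$t$ bound $|\alpha_n(t)| \leq C\lambda_n^{\alpha\delta}|\eta(t)| + C|G(t)|$'' — is not valid. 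A pointwise inequality between Fourier multipliers does not transfer to a pointwise inequality between the inverse Fourier transforms: the phase $e^{it\xi}$ destroys positivity, and the remainder $C(1+\xi^2)^{\delta/2}+C\lambda_n^{\alpha\delta}-m_n(\xi)\geq 0$ contributes an uncontrolled term after inversion. What the absolute-value substitution legitimately gives is only the $t$-independent bound $\|\alpha_n\|_{L^\infty(\R)}\leq \frac{1}{2\pi}\|m_n\hat\eta\|_{L^1}\leq C\lambda_n^{\alpha\delta}$, which is not enough by itself to control $\|\alpha_n\|_{L^q}$, since $\alpha_n$ is not compactly supported in $t$ (the operator $D_t^\delta$ is nonlocal) and you have supplied no decay estimate.

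The Fourier route can be repaired, but it needs one more idea to handle the tails: for $q\geq 2$ interpolate $\|\alpha_n\|_{L^q}\leq\|\alpha_n\|_{L^2}^{2/q}\|\alpha_n\|_{L^\infty}^{1-2/q}$, using Plancherel and the same multiplier bound to get $\|\alpha_n\|_{L^2}=c\|m_n\hat\eta\|_{L^2}\leq C\lambda_n^{\alpha\delta}$; alternatively, use the rapid decay of $\hat\eta$ to show that the Schwartz function $\alpha_n$ has tails uniformly summable in $L^q$ with the right power of $\lambda_n$. The paper avoids the Fourier side entirely: it writes $\beta(t)=\eta(t)e^{it\lambda_n^\alpha}$, bounds $\|\alpha_n\|_{L^q}\leq C\|\beta\|_{L^q}+C\|\beta\|_{\dot B^\delta_{q,2}}$, and estimates the homogeneous Besov norm via the finite-difference characterization, splitting the $\tau$-integral at scale $\lambda_n^{-\alpha}$. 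Both routes work once completed, but as written your argument has a genuine gap at the pointwise-bound step.
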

		
		\begin{proof}
			Let $\beta(t)=\eta(t)e^{it\lambda_n^{\alpha}}$, we have that
			$$ \|\alpha_n\|_{L^q(\mathbb{R})}\leq C\|\beta\|_{L^q(\mathbb{R})}+ C\|\beta\|_{\dot{B}_{q,2}^{\delta}(\mathbb{R})}.
			$$
			From the characterisation of Besov spaces (see \cite{BCT-livre}), we have that
			$$ \|\beta\|_{\dot{B}_{q,2}^{\delta}}^2\sim  \int_{\mathbb{R}}\left(\int_{\mathbb{R}}|\beta(t+\tau)-\beta(t)|^qdt\right)^{\frac{2}{q}}\frac{d\tau}{|\tau|^{1+2\delta}}.
			$$
			We write 
			$$
			\eta(t)\exp(it\lambda_n^\alpha)-\eta(\tau)\exp(i\tau\lambda_n^\alpha)=\exp(it\lambda_n^\alpha)(\eta(t)-\eta(\tau))+\eta(\tau)(\exp(it\lambda_n^\alpha)-\exp(i\tau\lambda_n^\alpha))
			$$
			which yields two contributions. The first one is again uniformly bounded. Therefore the issue is to check that
			$$ \int_{\mathbb{R}}\left(\int_{\mathbb{R}}|\eta(t)(e^{i(t+\tau)\lambda_n^{\alpha}}-e^{it\lambda_n^{\alpha}})|^qdt\right)^{\frac{2}{q}}\frac{d\tau}{|\tau|^{1+2\delta}}\leq (C\lambda_n^{\alpha\delta})^2.
			$$
			For $|\tau|\leq c\lambda_n^{-\alpha}$, we use
			$$  |e^{i\tau\lambda_n^{\alpha}}-1|\leq |\tau|\lambda_n^{\alpha},
			$$
			and thus
			\begin{equation*}
				\begin{split}
					&\int_{|\tau|<c\lambda_n^{-\alpha}}\frac{1}{|\tau|^{1+2\delta}}\left(\int_{\mathbb{R}}|\eta(t)(e^{i(t+\tau)\lambda_n^{\alpha}}-e^{it\lambda_n^{\alpha}})|^qdt\right)^{\frac{2}{q}}d\tau \\ \leq &\int_{|\tau|<c\lambda_n^{-\alpha}}\frac{1}{|\tau|^{1+2\delta}}\left(\int_{\R}|\eta(t)|^q|\tau|^q\lambda_n^{\alpha q}\right)^{\frac{2}{q}}d\tau\leq C\lambda_n^{2\alpha\delta}.
				\end{split}
			\end{equation*}
			The other contribution for $|\tau|>c\lambda_n^{-\alpha}$ can be bounded by
			$$
			\int_{|\tau|>c\lambda_n^{-\alpha}}\left(\int_{\R}|2\eta(t)|^qdt\right)^{\frac{2}{q}}d\tau\leq C\lambda_n^{2\alpha\delta}.
			$$
			This completes the proof of Lemma~\ref{fd}.
			
		\end{proof}
		
		Coming back to \eqref{back} and using Lemma~\ref{fd}, we deduce that it suffices to majorize
		$$
		\Big\|\sum_{n=0}^\infty
		\frac{ |\varphi_n(x)|^2}{\langle \lambda_n\rangle^{2(\alpha-\epsilon_0-\alpha\delta)}}
		\Big\|_{L^{r_0/2}(M)}\,.
		$$
		Using the compactness of $M$, it suffices to get the following estimate 
		\begin{equation*}
			\sup_{x\in M}\,\,
			\sum_{n=0}^\infty\frac{ |\varphi_n(x)|^2}{\langle \lambda_n\rangle^{2(\alpha-\epsilon_0-\alpha\delta)}}
			<\infty.
		\end{equation*}
		Fix $\delta$ sufficiently small such that $\beta:=2(\alpha-\epsilon_0-\alpha\delta)>d$ (here we fix the value of $q$ as well). 
		Therefore we need to show that 
		\begin{equation}\label{last}
		\sup_{x\in M}\,
		\sum_{n=0}^\infty\frac{ |\varphi_n(x)|^2}{\langle \lambda_n\rangle^{\beta}}
		<\infty,\quad \beta>d\,.
		\end{equation}
		Estimate \eqref{last} is direct if $|\varphi_n(x)|$ are uniformly bounded (this is the case of the torus).
		In the case of a general manifold, it is not true that $|\varphi_n(x)|$ are uniformly bounded. However \eqref{last} is true thanks to \cite{H}. 
		More precisely for a dyadic $N$, we can write
		$$
		\sum_{N\leq  \langle \lambda_n\rangle \leq 2N}\frac{ |\varphi_n(x)|^2}{\langle \lambda_n\rangle^{\beta}}
		\leq 
		CN^{-\beta} 
		\sum_{N\leq  \langle \lambda_n\rangle \leq 2N}  |\varphi_n(x)|^2\,.
		$$
		Thanks to \cite{H} there is $C$ such that for every dyadic $N\geq 1$ and every $x\in M$,
		\begin{equation}\label{Hormander}
		\sum_{N\leq  \langle \lambda_n\rangle \leq 2N}  |\varphi_n(x)|^2\leq C N^d.
		\end{equation}

		This readily implies \eqref{last}. The proof of Proposition~\ref{LD} is completed. 
	\end{proof}

We complete this section by proving following probabilistic bound for the tails of sharp spectral truncation, which will be used in the proof of Theorem \ref{thm3}.

	\begin{proposition}\label{probabilistic2}
		Assume that $2\leq r_0<\infty$, $\epsilon_0<\alpha-\frac{d}{2}$, $0<s<\alpha-\frac{d}{2}-\epsilon_0$ and $\beta>1$. Then there exist $C>0, c>0,$  such that for any $R>0$, we have 
		$$ \mu\left((u_0,v_0):N^s\|\Pi_N^{\perp}(u_0,v_0)\|_{\mathcal{Y}^{\beta}}>R\right)\leq Ce^{-cR^2}.
		$$
	\end{proposition}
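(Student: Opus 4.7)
The plan is to revisit the proof of Proposition~\ref{LD} and insert the gain coming from the spectral projector $\Pi_N^{\perp}$. First, by the standard Chebyshev + Gaussian moment argument, the stated tail bound is equivalent to showing
$$
\bigl\|N^{s}\|\Pi_{N}^{\perp}(u_0,v_0)\|_{\mathcal{Y}^{\beta}}\bigr\|_{L^{p}(d\mu)}\leq C\sqrt{p}
$$
uniformly in $N$ and for every $p\geq 2$. I would then carry out exactly the same reductions as in Proposition~\ref{LD}: introduce a bump function $\eta_{l}$ to localize in unit time intervals, use $\sum_{l}\langle l\rangle^{-\beta}<\infty$ to reduce to the $l=0$ term, apply Sobolev embedding in the time variable (with exponents $q$ large and $\delta$ small), commute Minkowski and Khinchin to bring the $L^{p}(\Omega)$ norm inside the spatial/temporal integral, and obtain the time-frequency factor $\alpha_{n}(t)$ satisfying $\|\alpha_{n}\|_{L^{q}(\R)}\leq C\lambda_{n}^{\alpha\delta}$ via Lemma~\ref{fd}.

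The only difference is that the spectral sum is restricted to $\lambda_{n}>N$, so the reduction arrives at the deterministic kernel bound
$$
N^{2s}\sup_{x\in M}\sum_{\lambda_{n}>N}\frac{|\varphi_{n}(x)|^{2}}{\langle\lambda_{n}\rangle^{2(\alpha-\epsilon_{0}-\alpha\delta)}}\leq C.
$$
On the range $\lambda_{n}>N$ one has the pointwise gain
$$
\frac{1}{\langle\lambda_{n}\rangle^{2(\alpha-\epsilon_{0}-\alpha\delta)}}\leq \frac{C}{N^{2s}}\cdot\frac{1}{\langle\lambda_{n}\rangle^{2(\alpha-\epsilon_{0}-\alpha\delta)-2s}},
$$
so matters reduce to the convergence of
$$
\sup_{x\in M}\sum_{n\geq 0}\frac{|\varphi_{n}(x)|^{2}}{\langle\lambda_{n}\rangle^{2(\alpha-\epsilon_{0}-\alpha\delta-s)}}.
$$
This is finite by the local Weyl law \eqref{Hormander} of H\"ormander, provided $2(\alpha-\epsilon_{0}-\alpha\delta-s)>d$. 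Since the hypothesis gives $s<\alpha-\epsilon_{0}-\frac{d}{2}$, picking $\delta>0$ sufficiently small (and $q$ correspondingly large) makes this choice admissible.

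The main difficulty is really just a careful bookkeeping of the parameters $\delta, q, r_{0}, \epsilon_{0}, s$ so that all the reductions above, and in particular the Sobolev embedding used to pass from $L^{\infty}(\R;W^{\epsilon_{0},r_{0}})$ to $L^{q}(\R;L^{r_{0}})$ after applying $D_{t}^{\delta}D^{\epsilon_{0}}$, remain valid while the exponent $2(\alpha-\epsilon_{0}-\alpha\delta-s)$ stays strictly above $d$. No new probabilistic input beyond the Khinchin inequality and the H\"ormander bound \eqref{Hormander} is required, so once the admissibility of the parameters is verified the argument is identical to that of Proposition~\ref{LD}.
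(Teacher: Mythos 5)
Your proposal is correct and follows essentially the same route as the paper's own proof: identical reductions (Chebyshev, localization in time, Sobolev embedding, Minkowski, Khinchin, Lemma~\ref{fd}, H\"ormander's bound \eqref{Hormander}). The only cosmetic difference is in the final step: the paper sums over dyadic blocks $2^{k}N\leq\lambda_{n}\leq 2^{k+1}N$ to extract the factor $N^{-2(\alpha-\epsilon_{0}-\alpha\delta-d/2)}$ directly, while you factor out $N^{-2s}$ pointwise from $\langle\lambda_{n}\rangle^{-2s}$ on the tail and then invoke convergence of the full sum; both yield the same admissibility condition $s<\alpha-\epsilon_{0}-\alpha\delta-d/2$, met by taking $\delta$ small.
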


	\begin{proof}
		Following the same notations as in the proof of the Proposition~\ref{LD}. It suffices to prove that 
		\begin{equation}\label{probabilistic2-1}
		\left\| \left\|\eta(t)\sum_{\lambda_n\geq N}\frac{c_ng_n(\omega)}{\langle\lambda_n\rangle^{\alpha}}e^{it\lambda_n^{\alpha}}\varphi_n(x)\right\|_{L^{\infty}(\mathbb{R};W^{\epsilon_0,r_0}(M))} \right\|_{L^p(\Omega)}\leq CN^{-s}\sqrt{p},
		\end{equation}
		for $p$ large enough. 
		From Sobolev embedding, we are reduced to prove the bound
		\begin{equation}\label{probabilistic2-2}
		\left\|\left\|\sum_{\lambda_n\geq N}\frac{c_ng_n(\omega)}{\langle\lambda_n\rangle^{\alpha-\epsilon_0}}\alpha_n(t)\varphi_n(x)\right\|_{L^q(\mathbb{R};L^{r_0}(M))}\right\|_{L^p(\Omega)}\leq CN^{-s}\sqrt{p},
		\end{equation}
		where $\alpha_n(t)=D_t^{\delta}(\eta(t)e^{it\lambda_n^{\alpha}})$.
		From Minkowski inequality, it will be sufficient to prove that for $p\geq q,p\geq r_0$,
		\begin{equation}\label{probabilistic2-3}
		\left\|\left\|\sum_{\lambda_n\geq N}\frac{c_ng_n(\omega)}{\langle\lambda_n\rangle^{\alpha-\epsilon_0}}\alpha_n(t)\varphi_n(x)\right\|_{L^p(\Omega)}\right\|_{L^q(\mathbb{R};L^{r_0}(M))}\leq CN^{-s}\sqrt{p}
		\end{equation}
		We can apply the Khinchin inequality and write 
		$$
		\Big\|
		\sum_{\lambda_n\geq N}\frac{c_n g_n(\omega)}{\langle\lambda_n\rangle^{\alpha-\epsilon_0}}
		\alpha_n(t)\varphi_n(x)\Big\|_{L^p(\Omega)}
		\leq 
		C\sqrt{p}
		\Big(
		\sum_{\lambda_n\geq N}
		\frac{|\alpha_n(t)|^2 \, |\varphi_n(x)|^2}{\langle \lambda_n\rangle^{2(\alpha-\epsilon_0)}}
		\Big)^{\frac{1}{2}}\,.
		$$
		By taking $\delta>0$ small and $q\geq r_0$ to be large enough and using Minkowski inequality again, we are reduced to prove the bound
		$$ \left\|\sum_{\lambda_n\geq N}\frac{|\alpha_n(t)|^2|\varphi_n(x)|^2}{\langle\lambda_n\rangle^{2(\alpha-\epsilon_0)}}\right\|_{L^{r_0/2}(M;L^{q/2}(\mathbb{R}))}\leq CN^{-s}.
		$$ 
		Using the bound $\|\alpha_n(t)\|_{L^q(t)}\leq C\lambda_n^{\alpha\delta}$ and \eqref{Hormander}, we have for each $x\in M$,
		\begin{equation*}
			\begin{split}
				\sum_{\lambda_n\geq N}\frac{|\varphi_n(x)|^2}{\langle\lambda_n\rangle^{2(\alpha-\epsilon_0-\alpha\delta)}}=&\sum_{k=0}^{\infty}\sum_{2^kN\leq \lambda_n\leq 2^{k+1}N}\frac{|\varphi_n(x)|^2}{\langle\lambda_n\rangle^{2(\alpha-\epsilon_0-\alpha\delta)}}\\
				\leq &C\sum_{k=0}^{\infty}\frac{(2^kN)^d}{(2^kN)^{2(\alpha-\epsilon_0-\alpha\delta)}}\\
				\leq &C'N^{-2(\alpha-\epsilon_0-\alpha\delta-d/2)},
			\end{split}
		\end{equation*}
		provided that $\alpha-\epsilon_0-\alpha\delta>\frac{d}{2}$. This completes the proof of Proposition~\ref{probabilistic2}.
	\end{proof}

	
	
	\section{Global existence and measure invariance}

	\subsection{Hamiltonian structure for the truncated equation}
	
	We consider here the truncated problem
	\begin{equation}\label{finite}
	\partial_tu=v, \partial_tv=-D^{2\alpha}u-\pi_Ne^{\pi_Nu};\quad (u,v)|_{t=0}=(u_0,v_0)=\pi_N(u_0,v_0)\in E_{N}\times E_{N}.
	\end{equation}
	
	For $(u,v)\in E_{N}\times E_{N}$, we write
	$$ (u,v)=\left(\sum_{\lambda_n\leq N}\psi\left(\frac{\lambda_n}{N}\right)a_n\varphi_n, \sum_{\lambda_n\leq N}\psi\left(\frac{\lambda_n}{N}\right)b_n\varphi_n\right),\quad a_n,b_n\in\mathbb{R}.
	$$
	Consider the Hamiltonian 
	$$ J(a_1,\cdots,a_N;b_1,\cdots;b_N):=\frac{1}{2}\sum_{\lambda_n\leq N}\psi\left(\frac{\lambda_n}{N}\right)^2(\langle\lambda_n\rangle^{2\alpha}a_n^2+b_n^2)+\int_M e^{\sum_{\lambda_n\leq N}\psi\left(\frac{\lambda_n}{N}\right)a_n\varphi_n(x)}.
	$$
	One easily verifies that
	\eqref{finite} is just the Hamiltonian ODE
	$$ \frac{da_n}{dt}=\frac{\partial J}{\partial b_n},\quad \frac{db_n}{dt}=-\frac{\partial J}{\partial a_n}.
	$$
	Recall that  $\Phi_N(t)$ denotes the flow map associated with \eqref{finite}. Thus from Liouville theorem, the measure $\mu_N$ is invariant under the flow $\Phi_N(t)$.

	\subsection{Construction of global solution}

	\begin{proposition}\label{almostglobal}
		Fix $0<\sigma<\alpha-\frac{d}{2}$ and $\beta>1$. There exists a constant $C>0$ such that for all $m\in\mathbb{N}$, there exists a $\rho_N$ measurable set $\wtilde{\Sigma}_N^{m}\subset E_N\times E_N$ so that
		$$ \rho_N(E_N\setminus\wtilde{\Sigma}_N^m)\leq 2^{-m}.
		$$
		For all $(f,g)\in\wtilde{\Sigma}_N^m$, $t\in\R$, 
		\begin{equation}\label{bd1}
		\|\Phi_N(t)(f,g)\|_{\mathcal{X}^{\sigma,\beta}}\leq C\sqrt{m+\log(1+|t|)}.
		\end{equation}
		Moreover, there exists $c>0$ such that for all $t_0\in\R$, $m\in\N$, $N\geq 1$, 
		\begin{equation}\label{inclusion1}
		\Phi_N(t_0)(\wtilde{\Sigma}_N^m)\subset \wtilde{\Sigma}_N^{m+\left\lfloor\frac{\log(1+|t_0|)}{\log 2}\right\rfloor+2}.
		\end{equation}
	\end{proposition}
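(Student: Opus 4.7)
The plan is to build the bound \eqref{bd1} directly into the definition of $\wtilde{\Sigma}_N^m$, so that \eqref{bd1} becomes tautological and \eqref{inclusion1} a formal consequence of the composition law $\Phi_N(s)\circ\Phi_N(t_0)=\Phi_N(s+t_0)$. The real content is the measure estimate $\rho_N((\wtilde{\Sigma}_N^m)^c)\leq 2^{-m}$, which I would obtain via a grid argument combining the $\rho_N$-invariance of $\Phi_N$, the large deviation bound Proposition~\ref{LD}, and the local theory Proposition~\ref{local_theory}.

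The restriction of $\rho_N$ to $E_N\times E_N$ has density proportional to $e^{-H_N(u,v)}$, where $H_N(u,v)=\tfrac12\|u\|_{H^\alpha}^2+\tfrac12\|v\|_{L^2}^2+\int_M e^{\pi_N u}$ is the conserved Hamiltonian of \eqref{sys_N}; Liouville's theorem together with conservation of $H_N$ therefore yield $\rho_N$-invariance of $\Phi_N(t)$, which combined with Proposition~\ref{LD} gives
\begin{equation}\label{inv}
\rho_N\big\{(f,g):\|\Phi_N(t)(f,g)\|_{\mathcal{X}^{\sigma,\beta}}>R\big\}\leq Ce^{-cR^2},\qquad\forall t\in\mathbb{R},\ R\geq 1.
\end{equation}
Fix a large $C_*$ (chosen below) and set
\[
\wtilde{\Sigma}_N^m:=\big\{(f,g)\in E_N\times E_N:\|\Phi_N(t)(f,g)\|_{\mathcal{X}^{\sigma,\beta}}\leq C_*\sqrt{m+\log(1+|t|)}\ \forall t\in\mathbb{R}\big\}.
\]
Then \eqref{bd1} holds by construction. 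For \eqref{inclusion1}, write $\Phi_N(s)(\Phi_N(t_0)(f,g))=\Phi_N(s+t_0)(f,g)$ and observe that $\log(1+|s+t_0|)-\log(1+|s|)\leq\log(1+|t_0|)$ (maximum at $s=0$), so $\Phi_N(t_0)(f,g)\in\wtilde{\Sigma}_N^{m'}$ as soon as $m'\geq m+\log(1+|t_0|)$; the prescription $m'=m+\lfloor\log(1+|t_0|)/\log 2\rfloor+2$ suffices since $1/\log 2>1$.

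The main task is the measure estimate. Decompose $\mathbb{R}=\bigcup_{k\geq 0}I_k$ with $I_k=[-2^k,2^k]$. Set $R_k:=C_1\sqrt{m+k}$, $\tau_k:=ce^{-\kappa R_k}$ (with $c,\kappa$ as in Proposition~\ref{local_theory}), and cover $I_k$ by a grid $\mathcal{G}_k$ of spacing $\tau_k$, so that $|\mathcal{G}_k|\lesssim 2^k e^{\kappa R_k}$. If $\|\Phi_N(s)(f,g)\|_{\mathcal{X}^{\sigma,\beta}}\leq R_k$ at every $s\in\mathcal{G}_k$, then a single application of Proposition~\ref{local_theory} on each subinterval $[s,s+\tau_k]$, using the boundedness of $\bar{S}(\tau)$ on $\mathcal{X}^{\sigma,\beta}$ uniformly for $|\tau|\leq 1$ together with the Sobolev embedding $\mathcal{H}^\alpha\hookrightarrow\mathcal{X}^{\sigma,\beta}$ for the nonlinear part, yields
\[
\|\Phi_N(t)(f,g)\|_{\mathcal{X}^{\sigma,\beta}}\leq C_\beta R_k+C_0\leq C_*\sqrt{m+\log(1+|t|)}\quad\text{for all }t\in I_k,
\]
after taking $C_*$ large enough to absorb $C_\beta C_1+C_0$ and the universal factor converting $\sqrt{m+k}$ into $\sqrt{m+\log(1+|t|)}$. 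Consequently, by \eqref{inv} and a union bound,
\[
\rho_N((\wtilde{\Sigma}_N^m)^c)\leq\sum_{k\geq 0}|\mathcal{G}_k|\cdot Ce^{-cR_k^2}\lesssim\sum_{k\geq 0}2^k e^{\kappa R_k-cR_k^2};
\]
choosing $C_1$ large enough that $cR_k^2-\kappa R_k\geq 2(m+k)\log 2$ bounds each summand by $2^{-(m+k+1)}$, so the total is $\leq 2^{-m}$.

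The key obstacle is that naive iteration of Proposition~\ref{local_theory} across $I_k$ would fail catastrophically: the step size $\tau_k\sim e^{-\kappa R_k}$ is tiny, while each iteration multiplies the $\mathcal{X}^{\sigma,\beta}$ norm by a factor $C_\beta>1$, so after the required $\sim e^{\kappa R_k}$ steps the bound would blow up doubly exponentially. The measure invariance \eqref{inv} is precisely what lets us sidestep this: we pay the combinatorial cost $|\mathcal{G}_k|=O(2^k e^{\kappa R_k})$ in the union bound in exchange for being able to start the local theory afresh from each grid point, so that a single application of Proposition~\ref{local_theory} per subinterval suffices. Tuning $C_1$ so that the Gaussian tail $e^{-cR_k^2}$ outruns both $|\mathcal{G}_k|$ and the target $2^{-m-k}$ with summable slack, and then $C_*\geq C_\beta C_1+C_0$, closes the argument.
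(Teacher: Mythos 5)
Your proposal is correct and follows essentially the same route as the paper: a dyadic decomposition of time $[-2^k,2^k]$, a grid of spacing $\tau_k\sim e^{-\kappa\sqrt{m+k}}$ dictated by the local theory of Proposition~\ref{local_theory}, the $\rho_N$-invariance of $\Phi_N(t)$ combined with Proposition~\ref{LD} and a union bound to control the complement, and a choice of the constant in the radius $R_k=C_1\sqrt{m+k}$ large enough to make the Gaussian tail beat the $O(2^ke^{\kappa R_k})$ cardinality of the grid. The only notable (and arguably cleaner) departure is your treatment of \eqref{inclusion1}: the paper iterates through the index $k$ using the ``symmetry'' $m\leftrightarrow k$ in the definition of $B_N^{m,k}$, whereas you deduce it directly from the group law $\Phi_N(s)\Phi_N(t_0)=\Phi_N(s+t_0)$ and the elementary inequality $\log(1+|s+t_0|)-\log(1+|s|)\leq\log(1+|t_0|)$; also beware that the chain ``$C_\beta R_k+C_0\leq C_*\sqrt{m+\log(1+|t|)}$ for all $t\in I_k$'' as literally written fails for small $|t|$ — it should be stated only for the dyadic shell $2^{k-1}<|t|\leq 2^k$, the smaller $|t|$ being handled by the grids at smaller $k$ (which is how the paper organizes it via the sets $\wtilde{\Sigma}_N^{m,k}$).
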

	\begin{proof}
		We follow closely \cite{BTT-AIF}. Define the set
		\begin{equation*}
			B_N^{m,k}(D):=\{(u_0,v_0)\in E_N\times E_N:\|(u_0,v_0)\|_{\mathcal{Y}^{\beta}}\leq D\sqrt{m+k}  \},
		\end{equation*} 
		where $D\gg 1$ is to be chosen later. From Proposition \ref{local_theory}, the time for local existence is
		$$ \tau_{m,k}:=ce^{-CD\sqrt{m+k}} \textrm{ on } B_N^{m,k}(D).
		$$ 
		Then for any $|t|\leq \tau_{m,k}$,
		\begin{equation}\label{A}
		\Phi_N(t)(B_N^{m,k}(D))\subset \{(u_0,v_0)\in E_N\times E_N: \sup_{|t|\leq \tau_{m,k}}\|\Phi_N(t)(u_0,v_0)\|_{\mathcal{X}^{\sigma,\beta}}\leq D\sqrt{m+k+1} \}.
		\end{equation}
		Moreover, 
		$$ \rho_N(E_N\times E_N\setminus B_N^{m,k}(D))\leq \mu(E_N\times E_N\setminus B_N^{m,k}(D))\leq Ce^{-cD^2(m+k)},
		$$
		thanks to Proposition \ref{LD} and Proposition \ref{probabilistic2} with $N=0$ there.
		
		Now we set
		$$ \wtilde{\Sigma}_{N}^{m,k}(D):=\bigcap_{j=-\left\lfloor \frac{2^k}{\tau_{m,k}}\right\rfloor}^{\left\lfloor \frac{2^k}{\tau_{m,k}}\right\rfloor}\Phi_N(-j\tau_{m,k})(B_N^{m,k}(D)).
		$$
		Thanks to \eqref{A}, we obtain that the for any $(u_0,v_0)\in \wtilde{\Sigma}_N^{m,k}(D)$, $|t|\leq 2^k$, 
		$$ \|\Phi_N(t)(u_0,v_0)\|_{\mathcal{X}^{\sigma,\beta}}\leq D\sqrt{m+k+1}.
		$$
		Since the measure $\rho_N$ is invariant by the flow $\Phi_N(t)$, we obtain that
		\begin{equation}\label{B}
		\begin{split}
		\rho_N(E_N\times E_N\setminus \wtilde{\Sigma}_{N}^{m,k}(D))\leq &
		\sum_{|j|\leq \left\lfloor 2^k\tau_{m,k}^{-1}\right\rfloor}\rho_N(E_N\times E_N\setminus \Phi_N(-j\tau_{m,k})(B_N^{m,k}(D))) 
		\\ \leq& \frac{2^{k+2}}{\tau_{m,k}}\rho_N(E_N\times E_N\setminus B_{N}^{m,k}(D))\\
		\leq & \frac{1}{c}2^{k+2}e^{CD\sqrt{m+k}-cD^2(m+k)}\leq 2^{-(m+k)},
		\end{split}
		\end{equation}
		provided that $D$ large enough, independent of $m,k$ and $N$. 
		
		Next, we set
		$$ \wtilde{\Sigma}_N^m:=\bigcap_{k=1}^{\infty}\wtilde{\Sigma}_N^{m,k}(D).
		$$
		Thanks to \eqref{B}, we have
		$$ \rho_N(E_N\times E_N\setminus \wtilde{\Sigma}_N^{m})\leq 2^{-m}.
		$$
		Moreover, we have for any $t\in \R$, $(u_0,v_0)\in\wtilde{\Sigma}_N^m$,
		$$ \|\Phi_N(t)(u_0,v_0)\|_{\mathcal{X}^{\sigma,\beta}}\leq C\sqrt{m+2+\log(1+|t|)}.
		$$
		Let us turn to the proof of \eqref{inclusion1}. The point is that the indices $m,k$ are symmetric in the definition of $B_N^{m,k}(D)$. Fix $(u_0,v_0)\in\wtilde{\Sigma}_N^m$, and let $k_0\in\mathbb{Z}$ so that 
		$ 2^{k_0-1}\leq |t|\leq 2^{k_0}.
		$ 
		 From \eqref{bd1} , we know that for any $k\geq k_0$, 
		$$ \|\Phi_N(t)\Phi_{N}(t_0)(u_0,v_0)\|_{\mathcal{X}^{\sigma,\beta}}\leq D\sqrt{m+k+k_0+1},
		$$
		which by definition means that
		$ \Phi_N(t)\Phi_N(t_0)(u_0,v_0)\in B_N^{m+k_0+1,k}(D).
		$
		This implies that $$\Phi_N(t_0)(u_0,v_0)\in \wtilde{\Sigma}_N^{m+k_0+1,k},\quad \forall k\in\mathbb{Z}.$$
		Thus
		$$ \Phi_N(t_0)(u_0,v_0)\in \wtilde{\Sigma}_N^{m+k_0+1}\subset \wtilde{\Sigma}_N^{m+2+\left\lfloor\frac{\log(1+|t_0|)}{\log 2}\right\rfloor}.
		$$
		This completes the proof of Proposition~\ref{almostglobal}.
	\end{proof}
	
In what follows, we always fix $\beta>1$.	
For integers $m\geq  1$ and $N\geq 1$, define the cylindrical sets
	$$ \Sigma_N^m:=\{(u,v)\in\mathcal{X}^{\sigma,\beta}:\pi_N(u,v)\in \wtilde{\Sigma}_N^m \}.
	$$

	For each $m\geq 1$, we set
	$$ \Sigma^m:=\left\{(f,g)\in\mathcal{X}^{\sigma,\beta}:\exists N_p\rightarrow\infty,\exists(f_p,g_{p})\in\Sigma_{N_p}^m, \lim_{p\rightarrow\infty}\|\pi_{N_p}(f_p,g_p)-(f,g)\|_{\mathcal{H}^{\sigma}}=0\right\}.
	$$
	\begin{lemme}\label{limsup}
		$\Sigma^m$ is a closed subset of $\mathcal{H}^{\sigma}$ and
		$$\limsup_{N\rightarrow\infty}\Sigma_N^m：=\bigcap_{N=1}^{\infty}\bigcup_{N'=N}^{\infty}\Sigma_{N'}^m\subset \Sigma^m.
		$$
		Moreover,
	\begin{equation}\label{mi}
	 \rho(\Sigma^m)\geq \rho(\mathcal{H}^{\sigma})-2^{-m}.
	\end{equation}	
	\end{lemme}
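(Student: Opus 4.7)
The plan is to prove the three assertions in succession: the inclusion $\limsup_N \Sigma_N^m \subset \Sigma^m$, the closedness of $\Sigma^m$ in $\mathcal{H}^{\sigma}$, and the measure bound \eqref{mi}.

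The inclusion is immediate: if $(f,g) \in \limsup_N \Sigma_N^m$ there is a sequence $N_p \to \infty$ with $(f,g) \in \Sigma_{N_p}^m$, and taking $(f_p,g_p) := (f,g)$ in the definition of $\Sigma^m$ reduces the required convergence to $\pi_{N_p}(f,g) \to (f,g)$ in $\mathcal{H}^{\sigma}$, which is just the convergence of the spectral truncations of an element of $\mathcal{H}^{\sigma}$.

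For closedness, given $(f^{(k)},g^{(k)}) \in \Sigma^m$ with $(f^{(k)},g^{(k)}) \to (f,g)$ in $\mathcal{H}^{\sigma}$, a standard diagonal extraction produces $(F_k,G_k) \in \Sigma_{M_k}^m$ with $M_k \to \infty$ and $\|\pi_{M_k}(F_k,G_k) - (f^{(k)},g^{(k)})\|_{\mathcal{H}^{\sigma}} < 1/k$, so that $\pi_{M_k}(F_k,G_k) \to (f,g)$ in $\mathcal{H}^{\sigma}$. The approximation clause of $\Sigma^m$ is then satisfied, and the only non-trivial point is that the limit $(f,g)$ must belong to $\mathcal{X}^{\sigma,\beta}$. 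For this, Proposition \ref{almostglobal} at $t=0$ gives $\|\pi_{M_k}(F_k,G_k)\|_{\mathcal{X}^{\sigma,\beta}} \leq C\sqrt{m}$ uniformly in $k$. Since $S(t)$ is bounded on $\mathcal{H}^{\sigma}$, one has $S(t)\pi_{M_k}(F_k,G_k) \to S(t)(f,g)$ in $\mathcal{H}^{\sigma}$ (hence distributionally) for each $t$. The reflexivity of $W^{\epsilon_0,r_0}(M)$ (since $1<r_0<\infty$) combined with Banach--Alaoglu lets me extract a weak-$\ast$ convergent subsequence whose limit is identified with $S(t)(f,g)$, and lower semi-continuity of the norm produces
\[
\|S(t)(f,g)\|_{W^{\epsilon_0,r_0}(M)} \leq \liminf_{k\to\infty} \|S(t)\pi_{M_k}(F_k,G_k)\|_{W^{\epsilon_0,r_0}(M)}
\]
for each $t$. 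Taking the supremum over $t \in [l,l+1)$ and summing in $l$ against the weight $\langle l\rangle^{-\beta}$ via Fatou's lemma for series yields $\|(f,g)\|_{\mathcal{Y}^{\beta}} \leq C\sqrt{m}$, so $(f,g) \in \mathcal{X}^{\sigma,\beta}$ and closedness follows.

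For \eqref{mi}, I would combine the inclusion just proved with the reverse Fatou inequality for the finite measure $\rho$,
\[
\rho(\Sigma^m) \geq \rho\Big(\limsup_{N\to\infty}\Sigma_N^m\Big) \geq \limsup_{N\to\infty}\rho(\Sigma_N^m),
\]
and reduce to showing $\limsup_N \rho(\Sigma_N^m) \geq \rho(\mathcal{H}^{\sigma}) - 2^{-m}$. Since $d\rho = G\,d\mu$, $d\rho_N = G_N\,d\mu$ and $G_N \to G$ in $L^1(d\mu)$ by Proposition \ref{finite_Gibbs}, one has $|\rho(A) - \rho_N(A)| \to 0$ uniformly in $A$. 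Interpreting the conclusion of Proposition \ref{almostglobal} as the bound $\rho_N(\mathcal{H}^{\sigma} \setminus \Sigma_N^m) \leq 2^{-m}$ on the $\pi_N$-pullback of $\wtilde{\Sigma}_N^m$, and using $\rho_N(\mathcal{H}^{\sigma}) \to \rho(\mathcal{H}^{\sigma})$, one obtains $\rho(\Sigma_N^m) \geq \rho(\mathcal{H}^{\sigma}) - 2^{-m} - o(1)$, which gives \eqref{mi}. The only genuinely delicate step in the entire proof is passing the $\mathcal{Y}^{\beta}$ bound to the $\mathcal{H}^{\sigma}$-limit in the closedness argument; the measure computation and the inclusion are soft and rest on reverse Fatou together with the $L^1(d\mu)$ convergence of $G_N$ to $G$.
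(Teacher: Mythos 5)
Your proof is correct and follows essentially the same scheme as the paper's (soft inclusion and closedness arguments, reverse Fatou plus the $L^1(d\mu)$ convergence $G_N \to G$ from Proposition \ref{finite_Gibbs}, and the finite-dimensional control from Proposition \ref{almostglobal}). The one genuine organizational difference is worth pointing out: you place the weak-compactness/Fatou verification of $(f,g) \in \mathcal{Y}^\beta$ inside the \emph{closedness} argument, which is where it is actually needed, since the $\mathcal{H}^\sigma$-limit of a sequence in $\Sigma^m$ is not a priori in $\mathcal{X}^{\sigma,\beta}$; in the \emph{inclusion} argument you correctly observe that $\mathcal{X}^{\sigma,\beta}$-membership is automatic because $\Sigma_{N_p}^m \subset \mathcal{X}^{\sigma,\beta}$ by definition, so no weak-$\ast$ argument is needed there. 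The paper does it the other way around — the $\mathcal{Y}^\beta$ verification is carried out in the inclusion proof (where it is essentially redundant, though it does furnish the quantitative bound $\|(f,g)\|_{\mathcal{Y}^\beta} \le C\sqrt{m}$ used later in Proposition \ref{globalexistence}), while the closedness argument as written does not explicitly confirm that the limit lies in $\mathcal{X}^{\sigma,\beta}$. Your arrangement is the cleaner one. A second minor variation: you use weak compactness in $W^{\epsilon_0,r_0}(M)$ at each fixed $t$ together with $\sup \liminf \le \liminf \sup$, whereas the paper works directly with weak-$\ast$ compactness in $L^\infty([l,l+1]; W^{\epsilon_0,r_0}(M))$; both routes give the same lower semicontinuity of the $\mathcal{Y}^\beta$-norm. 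The measure estimate is the same computation in both treatments.
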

\begin{remarque}
	Due to the lack of periodicity of the linear evolution, the definition of $\Sigma^m$ and the proof given below is a little different, compared to  \cite{BTT-AIF}. Indeed, the strong convergence in $\mathcal{H}^{\sigma}$ and the weak convergence in $\mathcal{X}^{\sigma,\beta}$ will 
	fullfil our need.  
\end{remarque}
	\begin{proof}
		For the closeness, take a sequence $(f^k,g^k)\in\Sigma^m$, such that $(f^k,g^k)\rightarrow (f,g)$, strongly in $\mathcal{H}^{\sigma}$. By definition, for each $k$, there exists a sequence  $(f_{p_k}^k,g_{p_k}^k)\in \Sigma_{N_k}^{m}$, such that
		$$ \|\pi_{N_k}(f_{p_k}^k,g_{p_k}^k)-(f^k,g^k)\|_{\mathcal{X}^{\sigma,\beta}}<2^{-k}.
		$$
		In particular, $(f_{p_k}^k,g_{p_k}^k)\rightarrow (f,g)$, strongly in $\mathcal{H}^{\sigma}$. Hence $(f,g)\in\Sigma^m$.
		
		Next, for any $(f,g)\in \limsup_{N\rightarrow\infty} \Sigma_N^m$, there exists $N_p\rightarrow\infty$, such that $\pi_{N_p}(f,g)\in\widetilde{\Sigma}_{N_p}^m$. Denote by $(f_p,g_p):=\pi_{N_p}(f,g)$. Obviously we have $\|(f_p,g_p)-(f,g)\|_{\mathcal{H}^{\sigma}}\rightarrow 0$ as $p\rightarrow\infty$.
		The next goal is to show that $(f,g)\in\mathcal{Y}^{\beta}$. First we claim that from 
		$$ \sum_{l\in\Z}\langle l\rangle^{-\beta}\sup_{l\leq t<l+1}\|\pi_{N_p}S(t)(f,g)\|_{W^{\epsilon_0,r_0}(M)}\leq C\sqrt{m},
		$$
		and the fact that $\|\pi_{N}\|_{W^{\epsilon_0,r_0}(M)\rightarrow W^{\epsilon_0,r_0}(M)}\leq C$, we have $\|(f,g)\|_{\mathcal{Y}^{\beta}}\leq C\sqrt{m}$. 
		
		Indeed, for any fixed $l\in\Z$, $\pi_{N_p}S(t)(f,g)\rightarrow S(t)(f,g)$, strongly in $L^{\infty}([l,l+1]; \mathcal{H}^{\sigma})$. From Banach-Alaoglu theorem, $\pi_{N_p}S(t)(f,g)$ converges in weak* topology of the space $L^{\infty}([l,l+1];W^{\epsilon_0,r_0}(M))$, up to a subsequence. Thus
		   $\pi_{N_p}S(t)(f,g)*\rightharpoonup S(t)(f,g)$ in $L^{\infty}([l,l+1];W^{\epsilon_0,r_0}(M))$. Moreover, we obtain that
		$$ \|S(t)(f,g)\|_{L^{\infty}([l,l+1];W^{\epsilon_0,r_0}(M))}\leq \liminf_{p\rightarrow\infty} \sup_{l\leq t<l+1}\|\pi_{N_p}S(t)(f,g)\|_{W^{\epsilon_0,r_0}(M)}.
		$$
		Applying Fatou's lemma to the summation over $l\in\Z$, we conclude that
		$$ \sum_{l\in\Z}\langle l\rangle^{-\beta}\sup_{l\leq t<l+1}\|S(t)(f,g)\|_{W^{\epsilon_0,r_0}(M)}\leq C\sqrt{m}.
		$$
		
	To prove \eqref{mi}, we use Fatou's lemma to get
		$$ \rho(\Sigma^m)=\rho\left(\limsup_{N\rightarrow\infty}\Sigma_N^m\right)\geq \limsup_{N\rightarrow\infty}\rho(\Sigma_N^m).
		$$
		By definition,
		$$\rho(\Sigma_N^m)=\int_{\Sigma_N^m}G(u)d\mu(u),\quad \rho_N(\Sigma_N^m)=\int_{\Sigma_N^m}G_N(u)d\mu_N(u).
		$$
		From Lemma \ref{Gibbs_definition}, we know that 
		$$ \lim_{N\rightarrow\infty}(\rho(\Sigma_N^m)-\rho_N(\Sigma_N^m))=0.
		$$
		Thanks to Proposition \ref{almostglobal}, $\rho_N(\Sigma_N^m)\geq\rho_N(\mathcal{H}^{\sigma})-2^{-m}$, thus
		$$ \limsup_{N\rightarrow\infty}\rho(\Sigma_N^m)\geq \limsup_{N\rightarrow\infty}(\rho_N(\mathcal{H}^{\sigma})-2^{-m})=\rho(\mathcal{H}^{\sigma})-2^{-m}.
		$$
	This completes the proof of Lemma~\ref{limsup}.
	\end{proof}

	As a consequence, thet set
	\begin{equation}\label{definition-Sigma}
	 \Sigma:=\bigcup_{m=1}^{\infty}\Sigma^m,
	\end{equation}
	has the full $\rho$ measure.
	Following similar argument in \cite{BTT-AIF}, we have the following global existence results for the Cauchy problem \eqref{sys} with any initial condition $(u_0,v_0)\in \Sigma$.
	\begin{proposition}\label{globalexistence}
		For every integer $m\in\mathbb{N}$, the local solution $u$ of \eqref{sys} with initial condition $(u_0,v_0)\in\Sigma^m$ is globally defined and we will denote it by $(u,\partial_t u)=\Phi(t)(u_0,v_0)$. Moreover, there exists $C>0$ such that for every $(u_0,v_0)\in\Sigma^m$ and every $t\in\mathbb{R}$, we have 
		$$ \|\Phi(t)(u_0,v_0)\|_{\mathcal{X}^{\sigma,\beta}}\leq C\sqrt{m+\log(1+|t|)}.
		$$
		Furthermore, there exists $(u_{0,k},v_{0,k})\in\widetilde{\Sigma}_{N_k}^m, N_k\rightarrow\infty$, such that
		$$ \lim_{k\rightarrow\infty}\|\pi_{N_k}\Phi_{N_k}(t)(u_{0,k},v_{0,k})-\Phi(t)(u_0,v_0)\|_{\mathcal{H}^{\sigma}}=0,
		$$
		and for every $t\in\mathbb{R}$,
		\begin{equation}\label{limit_globalexistence}
		\lim_{k\rightarrow\infty}\|W(t)(u_0,v_0)-\pi_{N_k}W_{N_k}(t)(u_{0,k},v_{0,k})\|_{H^{\alpha}(M)}=0.
		\end{equation}
		Finally, for every $t\in\mathbb{R}, \Phi(t)(\Sigma)=\Sigma$.
	\end{proposition}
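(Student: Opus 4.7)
The plan is to iterate the local convergence statement of Lemma~\ref{localconvergence} on a time grid whose step size is fixed by the uniform $\mathcal{X}^{\sigma,\beta}$-bound on the smooth truncated flows provided by Proposition~\ref{almostglobal}. Fix $(u_0,v_0)\in\Sigma^m$ and an approximating sequence $(f_p,g_p)\in\widetilde{\Sigma}_{N_p}^m$ with $\pi_{N_p}(f_p,g_p)\to(u_0,v_0)$ in $\mathcal{H}^\sigma$. For any fixed $T>0$, set $R:=C\sqrt{m+\log(1+T)}$, with $C$ the constant of Proposition~\ref{almostglobal}, so that $\|\Phi_{N_p}(t)(f_p,g_p)\|_{\mathcal{X}^{\sigma,\beta}}\leq R$ uniformly in $p$ for $|t|\leq T$. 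Let $\tau:=ce^{-\kappa R}$ be the corresponding time of Lemma~\ref{localconvergence}.

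I would then proceed by induction on $j\geq 0$ on the intervals $[j\tau,(j+1)\tau]$. Assume $\Phi(s)(u_0,v_0)$ has been constructed for $s\in[0,j\tau]$ together with the convergence $\pi_{N_p}\Phi_{N_p}(j\tau)(f_p,g_p)\to \Phi(j\tau)(u_0,v_0)$ in $\mathcal{H}^\sigma$. The same weak$^*$ lower-semicontinuity argument used in the proof of Lemma~\ref{limsup}—applied to the sequence $\pi_{N_p}\Phi_{N_p}(j\tau)(f_p,g_p)$, which is bounded by $R$ in $\mathcal{X}^{\sigma,\beta}$—upgrades the $\mathcal{H}^\sigma$ convergence to the bound $\|\Phi(j\tau)(u_0,v_0)\|_{\mathcal{X}^{\sigma,\beta}}\leq R$. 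One is then in position to apply Lemma~\ref{localconvergence} with initial datum at time $j\tau$ and common radius $R$; this extends both the flow $\Phi(t)(u_0,v_0)$ and the convergences ($\mathcal{H}^\sigma$ for the full solution, $H^\alpha$ for the nonlinear part $W$) to $[j\tau,(j+1)\tau]$. Iterating $\lceil T/\tau\rceil$ times, and symmetrically for negative times, constructs the flow on $[-T,T]$, and since $T$ is arbitrary one gets the global bound $\|\Phi(t)(u_0,v_0)\|_{\mathcal{X}^{\sigma,\beta}}\leq C\sqrt{m+\log(1+|t|)}$ and \eqref{limit_globalexistence}.

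For the invariance $\Phi(t)(\Sigma)=\Sigma$, combine the convergence just obtained with the containment \eqref{inclusion1} from Proposition~\ref{almostglobal}: for any $t_0\in\mathbb{R}$ and $(u_0,v_0)\in\Sigma^m$, one has $\Phi_{N_p}(t_0)(f_p,g_p)\in\widetilde{\Sigma}_{N_p}^{m+\lfloor\log(1+|t_0|)/\log 2\rfloor+2}$ together with $\pi_{N_p}\Phi_{N_p}(t_0)(f_p,g_p)\to\Phi(t_0)(u_0,v_0)$ in $\mathcal{H}^\sigma$. By the definition \eqref{definition-Sigma} of $\Sigma$ via $\Sigma^m$, this places $\Phi(t_0)(u_0,v_0)$ in $\Sigma^{m+\lfloor\log(1+|t_0|)/\log 2\rfloor+2}\subset\Sigma$, giving $\Phi(t_0)(\Sigma)\subset\Sigma$; the reverse inclusion follows by applying the same argument to $-t_0$ and using the group property of $\Phi$.

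The main obstacle is closing the induction: to re-apply Lemma~\ref{localconvergence} at time $j\tau$ one needs the limit $\Phi(j\tau)(u_0,v_0)$ in $\mathcal{X}^{\sigma,\beta}$, whereas the inductive convergence only delivers $\mathcal{H}^\sigma$ control. The resolution is the weak$^*$ lower-semicontinuity of the $\mathcal{Y}^\beta$-norm (noting that $\pi_{N_p}$ is uniformly bounded on $W^{\epsilon_0,r_0}(M)$), exactly as in Lemma~\ref{limsup}, which transfers the uniform $\mathcal{X}^{\sigma,\beta}$-bound on the smooth iterates to their $\mathcal{H}^\sigma$-limit at each step.
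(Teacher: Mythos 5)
Your proposal is correct and follows essentially the same route as the paper: uniform $\mathcal{X}^{\sigma,\beta}$ control on the truncated flows from Proposition~\ref{almostglobal}, weak$^*$ lower semicontinuity (Banach--Alaoglu plus Fatou on the $\mathcal{Y}^\beta$ sum, exactly as in Lemma~\ref{limsup}) to pass that bound to the $\mathcal{H}^\sigma$-limit at each grid time, then iteration of Lemma~\ref{localconvergence} with a fixed step $\tau$, and for the invariance the containment \eqref{inclusion1} combined with the convergence and the definition of $\Sigma^m$. You also correctly single out the one genuinely delicate step---upgrading $\mathcal{H}^\sigma$ convergence to an $\mathcal{X}^{\sigma,\beta}$ bound at each iterate so that the local existence time does not shrink---which is exactly the point the paper emphasizes.
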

	
	\begin{remarque}
		Unlike \cite{BTT-AIF}, here we only have the strong convergence for the norm $\mathcal{H}^{\sigma}$ for linear evolution instead of the norm $\mathcal{X}^{\sigma,\beta}$, a counterpart of the norm $Y^s$ in \cite{BTT-AIF}. However, weak convergence holds true in the functional space $\mathcal{X}^{\sigma,\beta}$, which allows us to get the desired bound of the $\mathcal{X}^{\sigma,\beta}$ for the limit. 
	\end{remarque}

	\begin{proof}
		By assumption, there exist sequences $N_k\rightarrow\infty, (u_{0,k},v_{0,k})\in \widetilde{\Sigma}_{N_k}^m$ such that
		
		$$ \lim_{k\rightarrow\infty}\|\pi_{N_k}(u_{0,k},v_{0,k})-(u_0,v_0)\|_{\mathcal{H}^{\sigma}}=0.
		$$ 
		From Proposition \ref{almostglobal}, we know that for any $t\in\mathbb{R}$,
		\begin{equation}\label{-1}
		\|\pi_{N_k}\Phi_{N_k}(t)(u_{0,k},v_{0,k})\|_{\mathcal{X}^{\sigma,\beta}}\leq C\sqrt{m+\log (1+|t|)}.
		\end{equation}
		Denote by $\Lambda_T=C\sqrt{m+\log (1+|T|)}$ for any given $T>0$. In order to apply Lemma \ref{localconvergence}, we need show that there exists a uniform constant $C'>0$, such that
		\begin{equation}\label{goal}
		\|\Phi(t)(u_0,v_0)\|_{L^{\infty}([-T,T];\mathcal{X}^{\sigma,\beta})}\leq C'\Lambda_T.
		\end{equation} 
		Note that we could not obtain \eqref{goal} by passing to the limit of \eqref{-1}, since we do not know the whether  $\|(u_{0,k},v_{0,k})-(u_0,v_0)\|_{L^{\infty}([-T,T];\mathcal{Y}^{\beta})}$ converges to zero. 
		
		Since $(u_{0,k},v_{0,k})\rightarrow(u_0,v_0)$ strongly in $\mathcal{H}^{\sigma}$ and $\|\overline{S}(t)\|_{\mathcal{H}^{\sigma}\rightarrow\mathcal{H}^{\sigma}}\leq 1$, we have that $S(t)(u_{0,k},v_{0,k})\rightarrow S(t)(u_0,v_0)$ in $C(\R;H^{\sigma}(M))$. From $$\sum_{l\in\Z}\langle l\rangle^{-\beta}\sup_{l\leq s<l+1}\|S(t+s)(u_{0,k},v_{0,k})\|_{W^{\epsilon_0,r_0}(M)}\leq \Lambda_T, $$ Banach-Alaoglu theorem implies that $S(t)(u_{0,k},v_{0,k})^*\rightharpoonup S(t)(u_0,v_0)$, in the weak* topology of $L^{\infty}([l,l+1];W^{\epsilon_0,r_0}(M))$, up to a subsequence in priori. Note that the convergence indeed takes place for the full sequence, since it converges in the strong topology of $L^{\infty}([l,l+1];H^{\sigma}(M))$. Consequently,  we have
		$$ \|S(t)(u_0,v_0)\|_{L^{\infty}([l,l+1];W^{\epsilon_0,r_0}(M))}\leq \liminf_{k\rightarrow\infty}\|S(t)(u_{0,k},v_{0,k})\|_{L^{\infty}([l,l+1];W^{\epsilon_0,r_0}(M))}.
		$$
		Multiply by $\langle l\rangle^{-\beta}$ in both sides of the inequality above and sum over $l\in\Z$, we have that
		$$ \|(u_0,v_0)\|_{\mathcal{Y}^{\beta}}\leq \Lambda_T,
		$$
		thanks to Fatou's lemma .

		Now, we are in a position to apply Lemma \ref{localconvergence} from for $|t|\leq \tau=\tau_{m,T}:=ce^{-\kappa\Lambda_T}$. The outputs are 
		$$ \lim_{k\rightarrow\infty}\|W_{N_k}(t)(u_{0,k},v_{0,k})-W(t)(u_0,v_0)\|_{L^{\infty}([-\tau,\tau];H^{\alpha}(M))}=0,
		$$
		$$ \lim_{k\rightarrow\infty}\|\pi_{N_k}\Phi_{N_k}(\tau)(u_{0,k},v_{0,k})-\Phi(\tau)(u_0,v_0)\|_{L^{\infty}([-\tau,\tau];\mathcal{H}^{\sigma})}=0.
		$$
		The same weak convergence argument yields
		$$ \|\Phi(t)(u_0,v_0)\|_{\mathcal{Y}^{\beta}}\leq 2\Lambda_T,\quad \forall |t|\leq \tau.
		$$

		We can then apply Lemma \ref{localconvergence} successively with the same constant $\Lambda_T$, to continue the flow map $\Phi(t)(u_0,v_0)$ to $|t|\leq 2\tau, 3\tau,\cdots$ up to $T$. The key point is that at each iteration step, the argument above does not increase the constant $\Lambda_T$, hence the length of local existence can be always chosen as $\tau$.
		
		In order to check the invariance of the set $\Sigma$, note that from Proposition \ref{almostglobal}, 
		$$\Phi_{N_k}(t_0)(\widetilde{\Sigma}_{N_k}^m)\subset \widetilde{\Sigma}_{N_k}^{m+l(t_0)},\quad l(t_0)=\left\lfloor\frac{\log(1+|t_0|)}{\log 2}\right\rfloor+2.
		$$ 
		Thus for any $(u_0,v_0)\in \Sigma^m$, there exists a seuqence $(u_{0,N_k},v_{0,N_k})\in\widetilde{\Sigma}_{N_k}^m$, such that 
		$$ \lim_{k\rightarrow\infty}\|\pi_{N_k}\Phi(t_0)(u_{0,k},v_{0,k})-\Phi(t_0)(u_0,v_0)\|_{\mathcal{H}^{\sigma}}=0.
		$$ 
		By definition, this implies that $\Phi(t_0)(u_0,v_0)\in \Sigma^{m+l(t_0)}$. Thus $\Phi(\Sigma)\subset\Sigma$. From the reversibility of $\Phi(t)$, we have $\Phi(t)(\Sigma)=\Sigma$. This completes the proof of Proposition  \ref{globalexistence}.
	\end{proof}


	\subsection{Measure invariance}
	The proof follows from several reductions and an approximation lemma. One should pay attention to the topology used here.

	By reversibility, it suffices to show that
	\begin{equation}\label{invariance}
	\rho(A)\leq \rho(\Phi(t)A)
	\end{equation}
	for all $t\in \mathbb{R}$ and every measurable set $A\subset \Sigma\subset\mathcal{X}^{\sigma,\beta}$. Note that the flow $\Phi(t)$ is well-defined on $\Sigma\subset\mathcal{X}^{\sigma,\beta}$. By inner regularity of the measure $\mu$ (hence for $\rho$ ), there exists a sequence of closed set $F_n\subset A$, with respect to the topology of $\mathcal{H}^{\sigma}$, such that
	$$ \rho(A)=\lim_{n\rightarrow\infty}\rho(F_n).
	$$
	Note that $\Sigma $ and $\mathcal{X}^{\sigma,\beta}$ both have the full  $\rho$ measure, hence it is reduced to prove \eqref{invariance} for all $F\subset \Sigma$, closed in $\mathcal{H}^{\sigma}$. Indeed, $F_n\subset A$ implies that  $\Phi(t)F_n\subset \Phi(t)A$, thus
	$$ \rho(A)=\lim_{n\rightarrow\infty}\rho(F_n)\leq \lim_{n\rightarrow\infty}\rho(\Phi(t)F_n)\leq \rho(\Phi(t)A).
	$$ 
	Next we reduce the matter to prove \eqref{invariance} for all $B\subset\Sigma$, closed in $\mathcal{H}^{\sigma}$, while bounded in $\mathcal{X}^{\sigma,\beta}$. Given $F\subset\Sigma$, closed in $\mathcal{H}^{\sigma}$, we set
	$ F_R:=F\cap B_R^{\mathcal{X}^{\sigma,\beta}},
	$ where we use the notation $B_R^{Y}$ to denote the ball of radius $R$ with respect to the norm of the specified Banach space $Y$. From the large deviation bound $\rho(F_R^c)\leq Ce^{-cR^2}$, we have that $\displaystyle{\rho(F)=\lim_{R\rightarrow\infty}\rho(F_R). }$ Therefore, if \eqref{invariance} is true for all such $F_R$, we immediately have
	$$ \rho(F)=\lim_{n\rightarrow\infty}\rho(F_R)\leq\lim_{n\rightarrow\infty}\rho(\Phi(t)F_R)\leq \rho(\Phi(t)F).
	$$   
	For the third step, we reduce to prove \eqref{invariance} for all  $K\subset\Sigma$, compact with respect to the $\mathcal{H}^{\sigma}$ topology, while bounded in the norm of $\mathcal{X}^{\sigma,\beta}$ by $R$.
	
	Indeed, 
	given $B\subset\Sigma\cap B_R^{\mathcal{X}^{\sigma}}$, we define the set
	$$ K_n:=\{(u,v)\in B:\|(u,v)\|_{\mathcal{H}^{\sigma'}}\leq n \},\quad \sigma<\sigma'<\alpha-\frac{d}{2}.
	$$
	From Rellich theorem, we know that $K_n$ are compact sets in $\mathcal{H}^{\sigma}$. From the large deviation bound of of the type $\rho(K_n^c)\leq Ce^{-cn^2}$, we know that
	$$ \rho(F)=\lim_{n\rightarrow\infty}\rho(K_n).
	$$
	Thus the same argument as above yields
	$$ \rho(B)=\lim_{n\rightarrow\infty} \rho(K_n)\leq\lim_{n\rightarrow\infty}\rho(\Phi(t)K_n)\leq \rho(\Phi(t)B).
	$$

	Finally we assume that $K\subset\Sigma$ is a compact set with respect to the topology of $\mathcal{H}^{\sigma}$, which is bounded by $R$ in the norm of $\mathcal{X}^{\sigma,\beta}.$ To prove \eqref{invariance} for $K$, 
	we need an approximation lemma:
	\begin{lemme}\label{approximation}
		There exists $ C_0>0$ such that the following holds true. For every $R>1$, $\epsilon>0$, every set $K  \subset B_R^{\mathcal{X}^{\sigma,\beta}}$, compact with resepct to the topology of $\mathcal{H}^{\sigma}$, there exists $N_0\geq 1$ such that for all $N\geq N_0$  $(u_0,v_0)\in K$, and all $|t|\leq \tau_{C_0R}=ce^{-\kappa C_0R}$, we have
		$$ \|\Phi(t)(u_0,v_0)-\Phi_N(t)\pi_N(u_0,v_0)\|_{\mathcal{H}^{\sigma}}<\epsilon.
		$$
	\end{lemme}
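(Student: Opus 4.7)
The plan is a direct Duhamel/Gronwall comparison of the two flows, decomposing the difference $\Phi(t)(u_0,v_0)-\Phi_N(t)\pi_N(u_0,v_0)$ into linear and nonlinear parts and using the $\mathcal{H}^\sigma$-compactness of $K$ to upgrade every pointwise statement $\pi_N\to\mathrm{Id}$ into a statement uniform over $K$. First I would choose $C_0$ large enough, depending only on the operator norm of $\pi_N$ on $\mathcal{X}^{\sigma,\beta}$ and on the size of the Gronwall constants below, so that Proposition~\ref{local_theory} produces both flows on $[-\tau_{C_0R},\tau_{C_0R}]$ with the decomposition
\[
\Phi(t)(u_0,v_0)=\bar S(t)(u_0,v_0)+(\tilde u,\tilde v), \qquad \Phi_N(t)\pi_N(u_0,v_0)=\bar S(t)\pi_N(u_0,v_0)+(\tilde u_N,\tilde v_N),
\]
with $\|(\tilde u,\tilde v)\|_{\mathcal{H}^\alpha},\ \|(\tilde u_N,\tilde v_N)\|_{\mathcal{H}^\alpha}\leq C$ uniformly on $K$, and with $(\tilde u_N,\tilde v_N)\in E_N\times E_N$ since the truncated equation preserves $E_N$.

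The linear discrepancy $\bar S(t)(\mathrm{Id}-\pi_N)(u_0,v_0)$ has $\mathcal{H}^\sigma$-norm at most $\|(\mathrm{Id}-\pi_N)(u_0,v_0)\|_{\mathcal{H}^\sigma}$, which tends to $0$ uniformly on $K$ because strong operator convergence on a compact set is automatically uniform. For the nonlinear discrepancy, Duhamel gives
\[
\tilde u(t)-\tilde u_N(t)=-\int_0^t D^{-\alpha}\sin\bigl((t-s)D^\alpha\bigr)E(s)\,ds, \qquad E(s):=e^{u(s)}-\pi_N e^{\pi_N u_N(s)},
\]
which I would split as $E=(e^u-e^{u_N})+(e^{u_N}-e^{\pi_N u_N})+(\mathrm{Id}-\pi_N)e^{\pi_N u_N}$. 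The pointwise inequality $|e^a-e^b|\leq(e^{|a|}+e^{|b|})|a-b|$, together with the uniform bound $\|u\|_\infty+\|u_N\|_\infty+\|\pi_N u_N\|_\infty\leq C(R)$ (coming from $\|(u_0,v_0)\|_{\mathcal{Z}^\beta}\leq R$, boundedness of $\pi_N$ on $L^\infty$, and $H^\alpha\hookrightarrow L^\infty$ applied to $\tilde u, \tilde u_N$), bounds the first two terms of $E$ in $L^2$ by $e^{C(R)}\bigl(\|u-u_N\|_{L^2}+\|(\mathrm{Id}-\pi_N)u_N\|_{L^2}\bigr)$. Splitting each of these further into linear and nonlinear constituents reduces the $o_N(1)$-analysis to $\|(\mathrm{Id}-\pi_N)(u_0,v_0)\|_{L^2}$, which is uniformly $o_N(1)$ on $K$ by compactness, plus $\|(\mathrm{Id}-\pi_N)\tilde u_N\|_{L^2}\lesssim N^{-\alpha}\|\tilde u_N\|_{H^\alpha}\lesssim N^{-\alpha}$ using the frequency support of $\tilde u_N$ in $E_N$. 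The third piece of $E$ is handled via the fact that $H^\alpha$ is an algebra ($\alpha>d/2$), giving a uniform $H^\alpha$-bound on $e^{\pi_N u_N}$ in terms of $R$, hence $\|(\mathrm{Id}-\pi_N)e^{\pi_N u_N}\|_{L^2}\lesssim N^{-\alpha}$.

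Feeding these estimates into Duhamel and using $\|\tilde u-\tilde u_N\|_{L^2}\leq C\|\tilde u-\tilde u_N\|_{H^\alpha}$ produces
\[
\|\tilde u(t)-\tilde u_N(t)\|_{H^\alpha}\leq C(R)\int_0^{|t|}\|\tilde u(s)-\tilde u_N(s)\|_{H^\alpha}\,ds+o_N(1),
\]
uniformly in $(u_0,v_0)\in K$ and $|t|\leq\tau_{C_0R}$. Gronwall's lemma then yields the desired uniform $o_N(1)$ bound on $\|\tilde u-\tilde u_N\|_{L^\infty_tH^\alpha}$, provided $C_0$ was picked so that $C(R)\tau_{C_0R}=O(1)$; the analogous bound for $\tilde v-\tilde v_N$ is identical, and combining with the linear estimate together with $\mathcal{H}^\alpha\hookrightarrow\mathcal{H}^\sigma$ completes the argument. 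The main technical point is to verify that every ``$o_N(1)$'' above is genuinely uniform over $(u_0,v_0)\in K$: this is where $\mathcal{H}^\sigma$-compactness of $K$ is indispensable, since the local theory provides Lipschitz continuous dependence only with respect to the stronger $\mathcal{Z}^\beta$-topology, and $\mathcal{H}^\sigma$-proximity of initial data is not comparable to $\mathcal{Z}^\beta$-proximity, ruling out a cleaner reduction via a finite $\mathcal{H}^\sigma$-net.
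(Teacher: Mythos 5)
Your overall strategy matches the paper's: decompose each flow into linear and nonlinear parts, compare the Duhamel integrals by splitting the nonlinear error into three terms, use the Lipschitz property of the exponential together with the uniform $L^\infty$ control from $\mathcal{Z}^\beta$, then close via Gronwall, upgrading pointwise convergence $\pi_N\to\mathrm{Id}$ to uniform convergence on $K$ by compactness. The linear part and the first two pieces of your decomposition of $E$ are treated correctly.

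There is, however, a genuine gap in your treatment of the third piece $(\mathrm{Id}-\pi_N)e^{\pi_N u_N}$. You claim a uniform $H^\alpha$-bound on $e^{\pi_N u_N}$ in terms of $R$ and deduce $\|(\mathrm{Id}-\pi_N)e^{\pi_N u_N}\|_{L^2}\lesssim N^{-\alpha}$. This cannot hold uniformly over $K$: writing $u_N = S(t)(u_{0,N},v_{0,N}) + \tilde u_N$, the nonlinear part $\tilde u_N$ is indeed uniformly bounded in $H^\alpha$, but the linear part is $S(t)\pi_N(u_0,v_0)$ with $(u_0,v_0)$ only in $\mathcal{H}^\sigma$, $\sigma<\alpha-d/2<\alpha$. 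Its $H^\alpha$-norm grows like $N^{\alpha-\sigma}$ and is not controlled by $R$, hence neither is $\|e^{\pi_N u_N}\|_{H^\alpha}$. The algebra property of $H^\alpha$ does not help here, since you first need a uniform $H^\alpha$-bound on $\pi_N u_N$ itself, which you do not have. The correct move (and the one the paper makes, with $e^{w+S(t)(u_0,v_0)}$ in place of your $e^{\pi_N u_N}$) is to work in $H^\sigma$: a Moser-type estimate gives $\|e^{\pi_N u_N}\|_{H^\sigma}\lesssim e^{\|\pi_N u_N\|_{L^\infty}}\,\|\pi_N u_N\|_{H^\sigma}$, and both factors are uniformly controlled by $R$ (the $L^\infty$-norm via $\mathcal{Y}^\beta\subset\mathcal{Z}^\beta$ on the linear part plus $H^\alpha\hookrightarrow L^\infty$ on the nonlinear part and boundedness of $\pi_N$ on $L^\infty$; the $H^\sigma$-norm directly from $K\subset B_R^{\mathcal{X}^{\sigma,\beta}}$ and the uniform $H^\alpha$-bound on $\tilde u_N$). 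This yields $\|\pi_N^\perp e^{\pi_N u_N}\|_{L^2}\lesssim N^{-\sigma}$, which is still uniform $o_N(1)$ because $\sigma>0$, so the proof closes; but as written your estimate with $N^{-\alpha}$ rests on a false uniform bound.
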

	\begin{proof}
		The proof is just a refinement of the proof of Lemma \ref{localconvergence}. 
		Denote by $\pi_N(u_0,v_0)=(u_{0,N},v_{0,N}),$ and write
		$$ \Phi_N(t)(u_{0,N},v_{0,N})=\overline{S}(t)(u_{0,N},v_{0,N})+(w_N(t),\partial_tw_N(t)),$$$$ \Phi(t)(u_0,v_0)=\overline{S}(t)(u_0,v_0)+(w(t),\partial_tw(t)).
		$$
		Note that
		$$ \|\overline{S}(t)(1-\pi_N)(u_0,v_0)\|_{\mathcal{H}^{\sigma}}=\|(1-\pi_N)(u_0,v_0)\|_{\mathcal{H}^{\sigma}}\rightarrow 0,\textrm{ as } N\rightarrow \infty.
		$$
		From compactness of $K$, this convergence is uniform. It remains to prove the uniform convergence of the nonlinear part  $\|(w_N(t),\partial_tw_N(t))-(w(t),\partial_tw(t))\|_{\mathcal{H}^{\sigma}}$.

		First note that
		$$ \sup_{|t|\leq \frac{1}{2}}\|\overline{S}(t)(u_{0},v_{0})\|_{W^{\epsilon_0,r_0}(M)}\leq R,\quad \sup_{|t|\leq \frac{1}{2}}\|\overline{S}(t)(u_{0,N},v_{0,N})\|_{W^{\epsilon_0,r_0}(M)}\leq C_0R, 
		$$
		with $C_0=\sup_{N}\|\pi\|_{W^{\epsilon_0,r_0}\rightarrow\infty W^{\epsilon_0,r_0}}$.
		 From local existence theory, for all $(u_0,v_0)\in K\subset B_R^{\mathcal{X}^{\sigma}}$, for any $N$, we have
		
		$$ \|(w_N(t),\partial_tw_N(t))\|_{L^{\infty}([-\tau_{C_0R},\tau_{C_0R}];\mathcal{H}^{\alpha})}\leq C_0R+1, \quad \|(w(t),\partial_tw(t))\|_{L^{\infty}([-\tau_R,\tau_R];\mathcal{H}^{\alpha})}\leq R+1.
		$$
		To boung $\|w_N(t)-w(t)\|_{H^{\alpha}(M)}$, as in the proof of Lemma \ref{localconvergence}, we have to estimate three contributions
		\begin{equation*}
			\begin{split}
				&\mathrm{I}(t):=\int_0^t\left\|e^{\pi_{N}(w_N(t')+S(t')(u_{0,N},v_{0,N}))}-e^{\pi_{N}(w(t')+S(t')(u_0,v_{0}))}\right\|_{L^2(M)}dt',\\
				&\mathrm{II}(t):=\int_0^t\left\|e^{w(t')+S(t')(u_0,v_0)}-e^{\pi_{N}(w(t')+S(t')(u_0,v_0))}\right\|_{L^2(M)}dt',\\
				&\mathrm{III}(t):=\int_0^t\left\|\pi_N^{\perp}e^{w(t')+S(t')(u_0,v_0)}\right\|_{L^2(M)}dt'.
			\end{split}
		\end{equation*}
		The sum of the three contributions can be bounded by
		$$C(R,\tau_{C_0R})\left(\int_0^t\|w_N(t')-w(t)\|_{L^2(M)}dt'+o_{N\rightarrow\infty}(1)\right),
		$$
	It is not enough to conclude. We note the $o_{N\rightarrow\infty}(1)$ consists of the expressions of the form
	\begin{equation*}
		\begin{split} &\|\pi_{N}^{\perp}\exp(w(t)+S(t)(u_0,v_0))\|_{L^{\infty}([-\tau_{C_0R},\tau_{C_0R}];L^2(M))}\\ + &\|S(t)(u_0-u_{0,N},v_0-v_{0,N})\|_{L^{\infty}([-\tau_{C_0R},\tau_{C_0R}];L^2(M))},
			\end{split}
	\end{equation*}
	and	the second term above converges unifromly to $0$ since $(u_0,v_0)$ varies in a compact set $K$. The first term above can be bounded by
		$$ N^{-\sigma}\|e^{w(t)+S(t)(u_0,v_0)}\|_{H^{\sigma}(M)}\leq CN^{-\sigma}e^{\|w(t)+S(t)(u_0,v_0)\|_{L^{\infty}(M)}}\|w(t)+S(t)(u_0,v_0)\|_{H^{\sigma}(M)},
		$$
		which converges to $0$ uniformly. The proof of Lemma~\ref{approximation} is complete.
	\end{proof}

Now we can complete the proof of \eqref{invariance} for $K$.	From local well-posedness, there exists $A>0$, such that for all $\epsilon>0$, and $|t|\leq \tau_R$, we have
	$$ \Phi_N(t)(K+B_{\epsilon}^{\mathcal{X}^{\sigma,\beta}})\subset \Phi_N(t)K+B_{A\epsilon}^{\mathcal{H}^{\sigma}}.
	$$
	Thus
	\begin{equation*}
		\begin{split}
			\rho(\Phi(t)(K)+B_{2\epsilon}^{\mathcal{H}^{\sigma}})\geq &\limsup_{N\rightarrow\infty}\rho_N(\Phi(t)(K)+B_{2\epsilon}^{\mathcal{H}^{\sigma}}) \quad \textrm{ Fatou's lemma }\\
			\geq &\limsup_{N\rightarrow\infty}\rho_N(\Phi_N(t)((K+B_{A^{-1}\epsilon}^{\mathcal{X}^{\sigma,\beta}})\cap E_N)) \quad \textrm{ Lemma \ref{approximation}}\\
			\geq & \limsup_{N\rightarrow\infty}\rho_N(\Phi_N(t)(K\cap E_N))\\
			\geq & \limsup_{N\rightarrow\infty}\rho_N(K\cap E_N) \quad \textrm{ invariance for finite dimensional system}\\ \geq& \rho(K).
		\end{split}
	\end{equation*}
	By taking $\epsilon\rightarrow 0$, we have that $\rho(\Phi(t)K)\geq \rho(K)$, for all $|t|\leq \tau_R$.
	Finally, for any $t\in\mathbb{R}$, we can conclude by iteration.

	\section{unique limit for smooth approximation: Proof of Theorem \ref{thm3} }
Fix $\beta>1$ in this section. Take $s>0,\delta>0$ such that $0<s+\delta<\alpha-\frac{d}{2}-\epsilon_0$, as in the Proposition \ref{probabilistic2}. Define the set
$$ \mathcal{K}_N=\{(u_0,v_0): N^s\|\Pi_N^{\perp}(u_0,v_0)\|_{\mathcal{Y}^{\beta}}\leq 1 \},\quad \mathcal{K}:=\bigcup_{N=1}^{\infty}\bigcap_{N'=N}\mathcal{K}_{N'}.
$$
Thanks to Proposition \ref{probabilistic2}, we have
$$ \mu(\mathcal{K}_N^c)\leq Ce^{-cN^{2\delta}}.
$$
Hence the convergence of the series
$$ \sum_{N=1}^{\infty}\mu(\mathcal{K}_N^c)<\infty
$$
implies that $\mu(\mathcal{K})=1$. Recall that $\Sigma $ defined as \eqref{definition-Sigma} in section 4 has full $\rho$ measure. It also has full $\mu$ measure, thanks to the fact that $\exp\left(\int_{M}e^u\right)$ is $\mu$ almost surely finite. Set
$$ \mathcal{G}:=\mathcal{K}\cap \Sigma,\quad \Omega_0=j^{-1}(\mathcal{G}),
$$
where $j:\Omega\rightarrow \mathcal{H}^{\sigma}$ is the canonical mapping defining the Gaussian measure $\mu$ on $\mathcal{H}^{\sigma}$. Note that $\mathcal{P}[\Omega_0]=1$, our goal is to show that for all $\omega\in \Omega_0$, the sequence of smooth solutions to \eqref{sys} with initial datum
$$ \Pi_N\left(u_{0}^{\omega}(x), v_{0}^{\omega}(x)\right):=\left(\sum_{\lambda_n\leq N} \frac{g_n(\omega)}{\langle \lambda_n\rangle^{\alpha}}\varphi_n(x),\sum_{\lambda_n\leq N}h_n(\omega)\varphi_n(\omega)\right)
$$
converges to the global solution constructed through Proposition \ref{globalexistence} with initial data
$$ \left(u_{0}^{\omega}(x), v_{0}^{\omega}(x)\right):=\left(\sum_{n=0}^{\infty} \frac{g_n(\omega)}{\langle \lambda_n\rangle^{\alpha}}\varphi_n(x),\sum_{n=0}^{\infty}h_n(\omega)\varphi_n(\omega)\right).
$$	
We first show that the convergence holds on a small time interval.
\begin{lemme}\label{convergence-shortime}
There exist $c>0, \kappa>0$, such that for all $R>0$, if
	$$ \|(u_0,v_0)\|_{\mathcal{X}^{\sigma,\beta}}\leq R,
\textrm{  and }
 \lim_{N\rightarrow\infty}\|\Pi_N^{\perp}(u_0,v_0)\|_{\mathcal{Y}^{\beta}}=0,
$$	
then
$$ \lim_{N\rightarrow\infty}\sup_{t\in I}\|\Phi(t)\Pi_N(u_0,v_0)-\Phi(t)(u_0,v_0)\|_{\mathcal{X}^{\sigma,\beta}}=0,
$$
where $I=[-ce^{-\kappa R}, ce^{-\kappa R}]$.
\end{lemme}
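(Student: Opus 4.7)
The plan is to mimic the proof of Lemma \ref{localconvergence}, decomposing both flows into their linear and nonlinear parts and treating the two pieces separately, but this time squeezing out convergence in the auxiliary norm $\mathcal{Y}^{\beta}$ (and hence in the full $\mathcal{X}^{\sigma,\beta}$) by carefully exploiting the quantitative hypothesis $\|\Pi_N^{\perp}(u_0,v_0)\|_{\mathcal{Y}^{\beta}}\to 0$, which was not available in Lemma \ref{localconvergence}.

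First I would apply Proposition \ref{local_theory} to produce constants $c,\kappa$ so that both $\Phi(t)(u_0,v_0)$ and $\Phi(t)\Pi_N(u_0,v_0)$ are defined on $I=[-\tau,\tau]$, with $\tau=ce^{-\kappa R}<1$, and admit the decompositions
$$
\Phi(t)(u_0,v_0)=\bar{S}(t)(u_0,v_0)+(w(t),\partial_tw(t)),\qquad \Phi(t)\Pi_N(u_0,v_0)=\bar{S}(t)\Pi_N(u_0,v_0)+(w_N(t),\partial_tw_N(t)),
$$
with $(w,\partial_tw)$ and $(w_N,\partial_tw_N)$ uniformly bounded in $L^{\infty}(I;\mathcal{H}^{\alpha})$. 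The bound $\|w_N-w\|_{L^{\infty}(I;H^{\alpha})}\to 0$ would then follow exactly as in Lemma \ref{localconvergence}, splitting the nonlinear difference via Duhamel into the three contributions $\mathrm{I},\mathrm{II},\mathrm{III}$ comparing $e^{\pi_Nu_N}$, $e^{\pi_Nu}$ and $e^u$, applying Gronwall, and using that $\|\Pi_N^{\perp}(u_0,v_0)\|_{\mathcal{H}^{\sigma}}\to 0$ is automatic from $(u_0,v_0)\in\mathcal{H}^{\sigma}$. This already delivers the $\mathcal{H}^{\sigma}$ component of the $\mathcal{X}^{\sigma,\beta}$ convergence via $H^{\alpha}\hookrightarrow H^{\sigma}$.

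The new ingredient is the $\mathcal{Y}^{\beta}$ convergence, uniformly in $t\in I$. For fixed $t$ I would split
$$
\Phi(t)\Pi_N(u_0,v_0)-\Phi(t)(u_0,v_0)=-\bar{S}(t)\Pi_N^{\perp}(u_0,v_0)+\big((w_N-w)(t),(\partial_tw_N-\partial_tw)(t)\big).
$$
For the nonlinear difference, the Sobolev embedding $H^{\alpha}(M)\hookrightarrow W^{\epsilon_0,r_0}(M)$ and the uniform boundedness of $\bar{S}(t')$ on $\mathcal{H}^{\alpha}$ yield
$$
\sup_{l\le t'<l+1}\|S(t')(w_N-w,\partial_tw_N-\partial_tw)(t)\|_{W^{\epsilon_0,r_0}(M)}\lesssim \|(w_N-w,\partial_tw_N-\partial_tw)(t)\|_{\mathcal{H}^{\alpha}}
$$
uniformly in $l$, and summing against $\sum_{l}\langle l\rangle^{-\beta}<\infty$ (valid since $\beta>1$) controls this contribution by the $\mathcal{H}^{\alpha}$ norm already proved to vanish. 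For the linear piece, the group property $\bar{S}(t')\bar{S}(t)=\bar{S}(t+t')$ together with the reindexing $l\mapsto l-\lfloor t\rfloor$ and Peetre's inequality $\langle l\rangle^{-\beta}\lesssim \langle t\rangle^{\beta}\langle l-\lfloor t\rfloor\rangle^{-\beta}$ would give
$$
\|\bar{S}(t)\Pi_N^{\perp}(u_0,v_0)\|_{\mathcal{Y}^{\beta}}\lesssim \langle\tau\rangle^{\beta}\|\Pi_N^{\perp}(u_0,v_0)\|_{\mathcal{Y}^{\beta}}\longrightarrow 0.
$$

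The main technical obstacle will be precisely this last step: the norm $\mathcal{Y}^{\beta}$ is tied to the free evolution on all of $\mathbb{R}$, so converting smallness of $\|\Pi_N^{\perp}(u_0,v_0)\|_{\mathcal{Y}^{\beta}}$ into smallness of $\|\bar{S}(t)\Pi_N^{\perp}(u_0,v_0)\|_{\mathcal{Y}^{\beta}}$ requires a shift-in-time comparison of the weights $\langle l\rangle^{-\beta}$. For this it is essential that $t$ stays in the small interval $I$, so that $\langle t\rangle^{\beta}$ is an absolute constant independent of $R$ and $N$, and that $\beta>1$ to guarantee summability of the shifted weight. These two constraints together explain why the length of $I$ must match the local existence time $ce^{-\kappa R}$ and leave no room for improvement.
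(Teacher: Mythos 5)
Your proposal is correct and follows essentially the same strategy as the paper: decompose both flows into free evolution plus nonlinear remainder, control the remainder difference $w_N-w$ in $L^\infty(I;H^\alpha)$ by Duhamel and Gronwall, transfer this to $\mathcal{Y}^\beta$ via the embedding $H^\alpha(M)\hookrightarrow W^{\epsilon_0,r_0}(M)$ and summability of $\langle l\rangle^{-\beta}$, and handle the linear mismatch $\bar{S}(t)\Pi_N^\perp(u_0,v_0)$ in $\mathcal{Y}^\beta$ by a time-shift comparison of weights, which is exactly the bound $\sup_{t\in I}\|\bar S(t)\Pi_N^\perp(u_0,v_0)\|_{\mathcal{Y}^\beta}\le 2\|\Pi_N^\perp(u_0,v_0)\|_{\mathcal{Y}^\beta}$ that the paper invokes.

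Two small inaccuracies are worth flagging. First, the three contributions $\mathrm{I},\mathrm{II},\mathrm{III}$ you import from Lemma~\ref{localconvergence} arise because there the approximate flow is the \emph{truncated} one $\Phi_{N_p}$, with nonlinearity $\pi_{N_p}e^{\pi_{N_p}u}$; terms $\mathrm{II},\mathrm{III}$ exist precisely to absorb the $\pi_{N_p}$ and $\pi_{N_p}^{\perp}$ mismatches. In the present lemma both flows are $\Phi$ with the full nonlinearity $e^u$, and the only Duhamel difference comes from the two sources $S(t')\Pi_N^\perp(u_0,v_0)$ and $w_N(t')-w(t')$; the split is simpler, not a verbatim reuse. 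Second, your closing remark that the constraints on the time-shift ``explain why the length of $I$ must match the local existence time and leave no room for improvement'' is misleading: the weight comparison $\langle l\rangle^{-\beta}\lesssim\langle t\rangle^\beta\langle l-\lfloor t\rfloor\rangle^{-\beta}$ works on \emph{any} bounded time interval with a constant depending only on its length, and $\beta>1$ is already built into the definition of $\mathcal{Y}^\beta$; the length of $I$ is dictated solely by the local existence time from Proposition~\ref{local_theory}, and the shift argument imposes no additional restriction.
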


\begin{proof}
The proof is very similar as in the proof of Lemma \ref{localconvergence}. Denote by
$$ \Phi(t)\Pi_N(u_0,v_0)=(u_N(t),\partial_tu_N(t) ),\quad \Phi(t)(u_0,v_0)=(u(t),\partial_tu(t) ),
$$  
and write
$$ u_N(t)=S(t)\Pi_N(u_0,v_0)+w_N(t), \quad u(t)=S(t)(u_0,v_0)+w(t).
$$
The existence and uniqueness of the flow $\Phi(t)$ on $I$ is guaranteed by Proposition~\ref{local_theory}, provided that we take the same constants $c>0, \kappa>0$ as in that proposition. Consequently, we have
\begin{equation}\label{bd}
\|(w_N,\partial_tw_N)\|_{L^{\infty}(I;\mathcal{H}^{\alpha})}+\|(w,\partial_tw)\|_{L^{\infty}(I;\mathcal{H}^{\alpha})}\leq 2C.
\end{equation}

We first show that the convergence for the linear evolution part. Obviously, 
$$ \lim_{N\rightarrow \infty}\|S(t)\Pi_N^{\perp}(u_0,v_0)\|_{L^{\infty}(I;\mathcal{H}^{\sigma})}=0. 
$$
 For the $\mathcal{Y}^{\beta}$ norm, by definition, 
$$ \sup_{t\in I}\|S(t)\Pi_N^{\perp}(u_0,v_0)\|_{\mathcal{Y}^{\beta}}\leq 2\|\Pi_{N}^{\perp}(u_0,v_0)\|_{\mathcal{Y}^{\beta}},
$$
hence it converges to zero. For the nonlinear part, using Duhamel, we write
$$ w_N(t)-w(t)=\int_{0}^t\frac{\sin((t-t')D^{\alpha})}{D^{\alpha}}\left(e^{S(t')\Pi_N(u_0,v_0)+w_N(t')}-e^{S(t')(u_0,v_0)+w(t')}\right)dt'.
$$
As in the proof of Lemma \ref{localconvergence}, we have 
\begin{equation*}
\begin{split}
&\|w_N(t)-w(t)\|_{H^{\alpha}(M)} \\ \leq  & \int_{0}^te^{\|S(t')\Pi_N(u_0,v_0)\|_{L^{\infty}(M)}+\|S(t')(u_0,v_0)\|_{L^{\infty}(M)}} \cdot e^{\|w_N(t')\|_{L^{\infty}(M)} }\|S(t')\Pi_N^{\perp}(u_0,v_0)\|_{L^2(M)}dt'\\
+&\int_{0}^te^{\|S(t')(u_0,v_0)\|_{L^{\infty}(M)}} \cdot e^{\|w_N(t')\|_{L^{\infty}(M)} +\|w(t')\|_{L^{\infty}(M)}     }\|w_N(t')-w(t)\|_{L^2(M)}dt'.
\end{split}
\end{equation*}
From Sobolev embedding $W^{\epsilon_0,r_0}\subset L^{\infty}$, $H^{\alpha}\subset L^{\infty}$, we have the bounds 
$$ \|S(t)\Pi_N(u_0,v_0)\|_{L^{\infty}(M)}\leq 2\|\Pi_N(u_0,v_0)\|_{\mathcal{Y}^{\beta}}\leq 2R, \quad \|w_N(t)\|_{L^{\infty}(M)}\leq \|w_N(t)\|_{H^{\alpha}(M)}\leq C,
$$
$$ \|S(t)(u_0,v_0)\|_{L^{\infty}(M)}\leq 2R, \quad \|w(t)\|_{L^{\infty}(M)}\leq C,
$$
thus
\begin{equation*}
\begin{split}
\|w_N(t)-w(t)\|_{H^{\alpha}(M)}\leq &C_1(R)\|S(t)\Pi_N^{\perp}(u_0,v_0)\|_{L^{\infty}(I;L^2(M))}\\+ &C_2(R)\int_0^t\|w_N(t)-w(t)\|_{H^{\alpha}(M)}dt.
\end{split}
\end{equation*}
This implies that
$$ \lim_{N\rightarrow\infty}\|w_N(t)-w(t)\|_{L^{\infty}(I;H^{\alpha}(M))}=0,
$$
thanks to Grwonwall inequality. Applying similar argument to $\partial_tw_N-\partial_tw$, we complete the proof of Lemma \ref{convergence-shortime}.
\end{proof}
The next lemma is the convergence on successive intervals. 
\begin{lemme}\label{convergence-iteration}
With the same $c,\kappa>0$ in Lemma \ref{convergence-shortime}, the following holds true. For any $R>0$, if 
$$ \sup_{t_0\leq |t|\leq t_0+1}\|\Phi(t)(u_0,v_0)\|_{\mathcal{X}^{\sigma,\beta}}\leq R,
$$
and
$$ \lim_{N\rightarrow\infty}\|\Phi(t_0)\Pi_N(u_0,v_0)-\Phi(t_0)(u_0,v_0)\|_{\mathcal{X}^{\sigma,\beta}}=0,
$$
then for $I=[t_0-ce^{-\kappa R}, t_0+ce^{-\kappa R}]$, we have
$$ \lim_{N\rightarrow\infty}\sup_{t\in I}\|\Phi(t)\Pi_N(u_0,v_0)-\Phi(t)(u_0,v_0)\|_{\mathcal{X}^{\sigma,\beta}}=0.
$$
\end{lemme}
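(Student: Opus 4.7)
The plan is to view this as a continuous-dependence (stability) statement for the flow $\Phi$ and to re-run the argument of Lemma \ref{convergence-shortime} with $t_0$ as the initial time and with the two ``initial data'' being
$$ (U^N, V^N) := \Phi(t_0)\Pi_N(u_0, v_0), \qquad (U, V) := \Phi(t_0)(u_0, v_0). $$
The second hypothesis gives $\|(U^N, V^N) - (U, V)\|_{\mathcal{X}^{\sigma,\beta}} \to 0$, so $\|(U^N, V^N)\|_{\mathcal{X}^{\sigma,\beta}} \le R + 1$ for $N$ large, and the embedding $\mathcal{Y}^{\beta} \hookrightarrow \mathcal{Z}^{\beta}$ (which holds because $\epsilon_0 > d/r_0$) gives a bound of the same order in $\mathcal{Z}^{\beta}$. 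Applying Proposition~\ref{local_theory} at the initial time $t_0$ to each of $(U,V)$ and $(U^N, V^N)$ therefore produces both solutions on $I$ (after harmlessly absorbing the $o_N(1)$ loss into $c,\kappa$) with decompositions
$$ \Phi(t)\Pi_N(u_0,v_0) = \overline{S}(t-t_0)(U^N, V^N) + (\tilde u^N(t),\tilde v^N(t)), $$
$$ \Phi(t)(u_0,v_0) = \overline{S}(t-t_0)(U, V) + (\tilde u(t),\tilde v(t)), $$
together with uniform bounds $\|(\tilde u^N, \tilde v^N)\|_{L^\infty(I;\mathcal{H}^\alpha)},\,\|(\tilde u, \tilde v)\|_{L^\infty(I;\mathcal{H}^\alpha)} \le C$.

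Next, I would handle the two contributions to $\Phi(t)\Pi_N(u_0,v_0) - \Phi(t)(u_0,v_0)$ separately. For the linear piece $\overline{S}(t-t_0)(U^N - U, V^N - V)$, since $|t - t_0| \le 1$ on $I$ and $\beta > 1$, a simple shift-of-index argument in the definition of $\mathcal{Y}^{\beta}$ (using $\langle l - 1\rangle^{-\beta} \le C \langle l\rangle^{-\beta}$ and the $\ell^1$ summability of the weights) gives
$$ \sup_{t \in I}\|\overline{S}(t-t_0)(U^N - U, V^N - V)\|_{\mathcal{X}^{\sigma,\beta}} \le C \|(U^N - U, V^N - V)\|_{\mathcal{X}^{\sigma,\beta}}, $$
which tends to $0$ by the hypothesis. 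For the nonlinear remainder, I would copy verbatim the Gronwall computation of Lemma~\ref{convergence-shortime}: writing the Duhamel formulas for $\tilde u^N$ and $\tilde u$, subtracting, and applying the elementary inequality $|e^f - e^g| \le (e^{|f|} + e^{|g|})|f - g|$ together with the uniform $L^\infty$ bounds coming from $\mathcal{Z}^{\beta}$ (for the linear evolutions) and from the embedding $H^\alpha \hookrightarrow L^\infty$ (for $\tilde u^N$ and $\tilde u$), one obtains
$$ \|\tilde u^N(t) - \tilde u(t)\|_{H^\alpha(M)} \le C_1(R)\|S(\cdot - t_0)(U^N - U, V^N - V)\|_{L^\infty(I;L^2(M))} + C_2(R)\int_{t_0}^{t}\|\tilde u^N(s) - \tilde u(s)\|_{H^\alpha(M)}\,ds, $$
and an analogous bound for the time derivative. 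Gronwall's inequality, combined with the fact that the source term tends to $0$ (since $(U^N, V^N) \to (U, V)$ in $\mathcal{H}^{\sigma}$ and $S(t)$ is bounded on $\mathcal{H}^{\sigma}$), yields $\|(\tilde u^N - \tilde u, \partial_t \tilde u^N - \partial_t \tilde u)\|_{L^\infty(I;\mathcal{H}^\alpha)} \to 0$. The Sobolev embedding $H^\alpha \hookrightarrow W^{\epsilon_0, r_0}$ then upgrades this to convergence in $\mathcal{Y}^{\beta}$ (by bounding the $\mathcal{Y}^{\beta}$ norm of the nonlinear remainder pointwise in time by a constant times its $\mathcal{H}^\alpha$ norm and using $\sum_l \langle l\rangle^{-\beta} < \infty$), giving the desired convergence in $\mathcal{X}^{\sigma,\beta}$.

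The only real subtlety is bookkeeping: since $\|(U^N, V^N)\|_{\mathcal{X}^{\sigma,\beta}}$ is only controlled by $R + o_N(1)$ rather than by $R$ exactly, the interval of common existence produced by Proposition~\ref{local_theory} is of length $ce^{-\kappa(R + o_N(1))}$, slightly smaller than the $ce^{-\kappa R}$ of the statement. This is absorbed without difficulty by choosing the constants $c,\kappa$ in the conclusion slightly smaller than those in Proposition~\ref{local_theory}, which is transparent for the way this lemma is used in the iteration. No new analytic estimate beyond those already contained in the proof of Lemma~\ref{convergence-shortime} is required; the content of the lemma is precisely that the short-time convergence argument does not use any special structure of $\Pi_N$ beyond $\mathcal{X}^{\sigma,\beta}$-closeness of the initial data.
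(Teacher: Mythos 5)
Your proposal is correct and follows essentially the same route as the paper's proof: translate to initial time $t_0$, split into linear part $\overline{S}(t-t_0)(\Phi(t_0)\Pi_N(u_0,v_0)-\Phi(t_0)(u_0,v_0))$ (handled by a shift-of-index estimate in the $\mathcal{Y}^\beta$ norm, using $|t-t_0|\leq 1$ and $\beta>1$) plus a nonlinear remainder, and close the latter by the same Gronwall computation as in Lemma~\ref{convergence-shortime}. Your remark about absorbing the $o_N(1)$ loss in the $\mathcal{X}^{\sigma,\beta}$ bound of $\Phi(t_0)\Pi_N(u_0,v_0)$ into slightly adjusted $c,\kappa$ is a fair bookkeeping observation that the paper also glosses over (it simply writes $\leq 2R$ for $N$ large), and is harmless for the iteration.
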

\begin{proof}
The proof is similar. Denote by
$$ \Phi(t)\Pi_N(u_0,v_0)=(u_N(t),\partial_tu_N(t)), \quad \Phi(t)(u_0,v_0)=(u(t),\partial_tu(t)).
$$
We write
$$ u_N(t)=S(t-t_0)\Phi(t_0)\Pi_N(u_0,v_0)+w_N(t), \quad u(t)=S(t-t_0)\Phi(t_0)(u_0,v_0)+w(t).
$$
For the linear evolution part, we observe that
\begin{equation*}
\begin{split}
&\sup_{t\in I}\|S(t-t_0)(\Phi(t_0)\Pi_N(u_0,v_0)-\Phi(t_0)(u_0,v_0)) \|_{\mathcal{Y}^{\beta}}\\
=&\sup_{t\in I}\sum_{l\in\Z}(1+|l|)^{-\beta}\sup_{s\in [l,l+1]}\|S(s+t-t_0)(\Phi(t_0)\Pi_N(u_0,v_0)-\Phi(t_0)(u_0,v_0))\|_{W^{\epsilon_0,r_0}(M)}\\
\leq & 2\|\Phi(t_0)\Pi_N(u_0,v_0)-\Phi(t_0)(u_0,v_0)\|_{\mathcal{Y}^{\beta}},
\end{split}
\end{equation*}
which converges to $0$ by assumption. Next, from local Cauchy theory, the solution $\Phi(t)\Pi_N(u_0,v_0)$ concides with the solution construced by Proposition \ref{local_theory} with initial data $\Phi(t_0)\Pi_N(u_0,v_0)$ for $t\in I$, thus
$$ \sup_{t\in I}\|\Phi(t)\Pi_N(u_0,v_0)\|_{\mathcal{X}^{\sigma,\beta}}\leq 2R
$$
for $N$ large enough. Now the convergence for the nonlinear part can be obtained in the same way as in the proof of Lemma \ref{convergence-shortime}. This ends the proof of Lemma \ref{convergence-iteration}.
\end{proof}
Now we finish the proof of Theorem \ref{thm3}.
\begin{proof}[Proof of Theorem \ref{thm3}]
Fix $T>0$, and $(u_0,v_0)\in \Sigma^k\cap\mathcal{K}$ for some $k\in\N$, it will be sufficient to show that
\begin{equation}\label{convergence-longtime}
 \lim_{N\rightarrow\infty}\sup_{|t|\leq T}\|\Phi(t)\Pi_N(u_0,v_0)-\Phi(t)(u_0,v_0)\|_{\mathcal{X}^{\sigma,\beta}}=0.
\end{equation}
By definition of $\Sigma^k$ and Proposition \ref{globalexistence}, for all $|t|\leq T$, 
\begin{equation}\label{bdPhi(t)}
\|\Phi(t)(u_0,v_0)\|_{\mathcal{X}^{\sigma,\beta}}\leq C\sqrt{k+\log(1+T)}.
\end{equation} 
Set
$$ R_{k,t}:=C\sqrt{k+\log(1+|t|)},\quad
   \tau=\tau_{k,T}:=ce^{-\kappa R_{k,2T}}
$$
as in the Lemma \ref{convergence-shortime} and Lemma \ref{convergence-shortime}.
Applying Lemma \ref{convergence-shortime} from $[0,\tau]$, we obtain that
$$ \lim_{N\rightarrow\infty}\|\Phi(t)\Pi_N(u_0,v_0)-\Phi(t)(u_0,v_0)\|_{L^{\infty}([0,\tau];\mathcal{X}^{\sigma,\beta})}=0.
$$
We can then successively apply Lemma~\ref{convergence-iteration} with $R=R_{k,2T}$ on $$[\tau,2\tau],[2\tau,3\tau],\cdots, [(m_0-1)\tau,m_0\tau]$$for $T\leq m_0\tau \leq T+\tau$. The key point here is that we have a uniform bound for $\|\Phi(t)(u_0,v_0)\|_{\mathcal{X}^{\sigma,\beta}}$ on each interval. Once we prove the convergence on $[m\tau,(m+1)\tau]$, the initial convergence condition holds for the successive interval. Finally, we conclude 
$$ \lim_{N\rightarrow\infty}\sup_{0\leq t\leq  T}\|\Phi(t)\Pi_N(u_0,v_0)-\Phi(t)(u_0,v_0)\|_{\mathcal{X}^{\sigma,\beta}}=0.
$$
Using the same argument for the negative time $t\in[0,-T]$, we conclude the proof of Theorem~ref{thm3}.
 
\end{proof}


	\section{Ill posedness for power type nonlinearity}
	Here we consider the ill posedness of the equation 
	$$ \partial_t^2u+D^{2\alpha}u+u^{2k+1}=0,
	$$
	for $x\in\mathbb{T}^d$ and $\alpha\geq\frac{d}{2}.$
	We are going to prove the following ill-posedness result.
	
	\begin{proposition}\label{ill-proposition}
		\begin{enumerate}
			\item If $0<s<\frac{d}{2}-\frac{\alpha}{k}$,  then there exist a seqeunce $u_n(t,x)\in C(\mathbb{R};C^{\infty}(M))$, solutions to
			$$ \partial_t^2u_n+D^{2\alpha}u_n+u_n^{2k+1}=0, \quad (u_n,\partial_tu_n)|_{t=0}=(u_{n,0},u_{n,1}),
			$$
			and a sequence $t_n$ tending to zero, such that
			$$ \|(u_{n,0},u_{n,1})\|_{{\mathcal H}^s}\rightarrow 0,
			$$
			while
			$$\|u_n(t_n)\|_{H^s(M)}\rightarrow\infty.
			$$
			\item In addition, if $\frac{d}{2}-\frac{2\alpha}{2k-1}\leq s<\frac{d}{2}-\frac{\alpha}{k}$, then for any $(u_0,u_1)\in C^{\infty}(M)\times C^{\infty}(M)$, there exists a seqeunce $u_n(t,x)\in C(\mathbb{R};C^{\infty}(M))$, solutions to
			$$ \partial_t^2u_n+D^{2\alpha}u_n+u_n^{2k+1}=0, \quad (u_n,\partial_tu_n)|_{t=0}=(u_{n,0},u_{n,1}),
			$$
			and a sequence $t_n$ tending to zero,
			such that
			$$ \|(u_{n,0},u_{n,1})-(u_0,u_1)\|_{{\mathcal H}^s}\rightarrow 0,
			$$
			while
			$$ \|u_n(t_n)\|_{H^s(M)}\rightarrow\infty.
			$$
		\end{enumerate}
	\end{proposition}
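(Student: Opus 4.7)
The approach is a scaling-based ODE concentration argument, in the spirit of Lebeau for semilinear wave equations and Christ--Colliander--Tao for Schr\"odinger: we use the scaling symmetry $u(t,x)\mapsto \lambda^{\alpha/k}u(\lambda^\alpha t,\lambda x)$ of the equation, whose critical Sobolev index is $s_c := d/2-\alpha/k$, and the hypothesis $s<s_c$ is exactly what opens a window in which the dispersive term $D^{2\alpha}$ is negligible compared with the nonlinearity.

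For part $(1)$, fix a nonzero $\phi\in C_c^\infty(\R^d)$ (identified with a function on $\T^d$ after its support is shrunk inside a single period) and introduce the ODE profile
\[
V(\tau,x):=\phi(x)\,\Psi\big(\phi(x)^k\tau\big),
\]
where $\Psi$ is the universal periodic solution of $\Psi''+\Psi^{2k+1}=0$ with $\Psi(0)=1$, $\Psi'(0)=0$. Then $V$ satisfies $\partial_\tau^2 V+V^{2k+1}=0$, is bounded in $L^\infty_{\tau,x}$ by conservation of energy at each frozen $x$, and its $H^s$ norm grows polynomially, $\|V(\tau,\cdot)\|_{H^s}\sim \tau^s$, because the oscillation frequency $\phi(x)^k$ is $x$-dependent. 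Choose $\lambda_n\to\infty$ and $\mu_n$ with
\[
\lambda_n^{\alpha/k}\ll\mu_n\ll\lambda_n^{d/2-s},
\]
which is possible precisely because $s<s_c$, and define the ansatz $u_n^{\mathrm{app}}(t,x):=\mu_n V(\mu_n^k t,\lambda_n x)$. By the ODE scaling $\partial_t^2 u_n^{\mathrm{app}}+(u_n^{\mathrm{app}})^{2k+1}=0$, so the defect as a solution of the PDE equals $D^{2\alpha}u_n^{\mathrm{app}}$, of size $\mu_n\lambda_n^{2\alpha}$ against a nonlinearity of size $\mu_n^{2k+1}$; the ratio $\varepsilon_n:=\lambda_n^{2\alpha}/\mu_n^{2k}$ tends to $0$. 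Note also $\partial_t u_n^{\mathrm{app}}(0,\cdot)\equiv 0$ since $\Psi'(0)=0$. Invoking smooth-data local well-posedness (Theorem~\ref{thm1}) with Cauchy data $(u_n^{\mathrm{app}}(0),0)$ produces the actual smooth solution $u_n$, and the remainder $r_n:=u_n-u_n^{\mathrm{app}}$ solves a wave equation with zero initial data, forcing $-D^{2\alpha}u_n^{\mathrm{app}}$, a bounded potential term $\sim V^{2k}(\mu_n^kt,\lambda_n x)\,r_n$, and higher-order nonlinear terms in $r_n$. A Gronwall energy estimate in the rescaled variable $\tau=\mu_n^k t$ bounds $\|r_n\|_{H^\alpha}$ by something much smaller than $\|u_n^{\mathrm{app}}\|_{H^\alpha}$ on the interval $\tau\in[0,\tau_n]$, provided $\tau_n$ grows slowly enough with respect to $\varepsilon_n$. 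Selecting such a $\tau_n\to\infty$ with $t_n:=\tau_n\mu_n^{-k}\to 0$ and $\mu_n\lambda_n^{s-d/2}\tau_n^s\to\infty$ produces
\[
\|(u_{n,0},u_{n,1})\|_{\mathcal H^s}\sim \mu_n\lambda_n^{s-d/2}\to 0,\qquad \|u_n(t_n)\|_{H^s}\sim \mu_n\lambda_n^{s-d/2}\,\tau_n^s\to\infty.
\]

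For part $(2)$, let $U$ be the global smooth solution of the PDE with the arbitrary smooth data $(u_0,u_1)$, and take Cauchy data $(u_0+u_n^{\mathrm{app}}(0),u_1)$. Writing $u_n=U+w_n$, the perturbation solves
\[
\partial_t^2 w_n+D^{2\alpha}w_n+\sum_{j=1}^{2k+1}\binom{2k+1}{j}U^{2k+1-j}w_n^{\,j}=0.
\]
The dominant self-interaction is still $w_n^{2k+1}$ and supports the same ansatz $w_n\approx u_n^{\mathrm{app}}$ of part $(1)$. The new cross terms $U^{2k+1-j}w_n^{\,j}$ with $1\le j\le 2k$ are handled by an energy estimate for the remainder $w_n-u_n^{\mathrm{app}}$; the worst of them, $U\,w_n^{2k}$, is the one that constrains the argument. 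Using the $\alpha$-derivative gain in the Duhamel integrand $D^{-\alpha}\sin((t-s)D^\alpha)$ and combining $L^\infty$ bounds on $U$ with the $H^\alpha$ energy of the perturbation, closing the bootstrap in the rescaled variable forces exactly $s\ge d/2-2\alpha/(2k-1)$, i.e. $\alpha>(2k-1)d/(2k+1)$, which is the content of Lemma~\ref{Energyestimate} alluded to in the paper. Once the remainder estimate is in place, the same selection of $(\lambda_n,\mu_n,\tau_n,t_n)$ as in part $(1)$ yields the required norm inflation around $(u_0,u_1)$.

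The main obstacle is the energy estimate for the remainder in part $(2)$: controlling the cross terms involving the background $U$ against the natural $H^\alpha$-type energy of the high-frequency perturbation on a time window long enough to witness the $\tau^s$ growth of $\|V(\tau)\|_{H^s}$. This is exactly where the assumption $\alpha>(2k-1)d/(2k+1)$ is forced. Everything else reduces to the careful choice of the scaling parameters $(\lambda_n,\mu_n)$, the polynomial-in-$\tau$ growth of $\|V(\tau)\|_{H^s}$, and a Gronwall iteration in rescaled variables.
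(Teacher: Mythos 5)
Your proposal follows essentially the same route as the paper: construct an ODE concentration profile $v_n$ at spatial frequency $n$ and amplitude slightly below the critical $n^{d/2-s}$ (the $\kappa_n$, or your $\mu_n/\lambda_n^{d/2-s}$, logarithmic loss), observe that after a time $t_n\to 0$ the $H^s$ norm of $v_n$ has grown large, and close a semi-classical energy estimate for the remainder $u_n-(\text{background})-v_n$ to transfer the growth to the actual solution. Two remarks on points where your write-up differs from the paper or has a gap.

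First, in part (2) the paper subtracts the \emph{free} evolution $u_L(t)=S(t)(u_0,u_1)$, so the remainder $w_n=u_n-u_L-v_n$ solves a wave equation with the explicit source $-D^{2\alpha}v_n-(f(v_n+u_L+w_n)-f(v_n))$, and the constraining term is the fixed forcing $|u_L||v_n|^{2k}$, of $L^2$ size $\sim n^{(2k-1)(d/2-s)-s}$. You instead peel off the \emph{nonlinear} evolution $U$. Both work, but your reading of the worst term is slightly off: the dangerous contribution is not a cross term $U w_n^{2k}$ treated as a coefficient against the unknown, but a pure \emph{forcing} $\sim U\, v_n^{2k}$ (since $w_n\approx v_n$), which must be absorbed into the energy bound. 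This is exactly what produces the restriction $s\ge \frac{d}{2}-\frac{2\alpha}{2k-1}$; that condition is intrinsically a constraint on $s$, not on $\alpha$, and it is not equivalent to $\alpha>\frac{(2k-1)d}{2k+1}$ (the latter only ensures a nonempty admissible range in Corollary~\ref{corollaire1}).

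Second, the lower bound $\|v_n(t_n)\|_{H^s}\gtrsim \kappa_n(\lambda_n t_n)^s$ is asserted, not proved. This is the only genuinely nontrivial step you leave as a sketch: the dominant contribution to $\nabla v_n$ is $\sim \lambda_n t_n\,\kappa_n n^{1-s}\,n^{d/2}\big(\varphi^k\nabla\varphi\,V'(\lambda_n t\,\varphi^k)\big)(nx)$, whose $L^2$ norm must be bounded from \emph{below} uniformly in the rapid oscillation parameter $\lambda_n t_n$, a statement of the type
$$
\big\|\psi(x)\,W(\lambda\psi(x))\big\|_{L^2(\R^d)}\ge c_0>0\quad\text{for all }\lambda\ge\lambda_0,
$$
with $W$ a nontrivial periodic function. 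The paper proves this by a coarea-formula / level-set argument (Lemma~\ref{geometric}), and then obtains the $H^s$ lower bound by interpolation against $L^2$ (if $s>1$) or against $H^2$ (if $s<1$). Without this ingredient the norm inflation is not actually established, so if you turn the proposal into a proof you will need to supply this lemma (or cite it).
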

	Note that to pass from $(u_0,u_1)\in \mathcal{H}^s(M)$, one can use the diagonal argument as in \cite{Tz-Lecture}. 	We refer to \cite{BT1/2,CCT,L,Oh_new,xia} for similar ill-posedness results of dispersive equations. The novelty for the present proposition is that under the additional assumption:
	$\frac{d}{2}-\frac{2\alpha}{2k-1}\leq s$, we obtain norm inflation near any smooth data $(u_0,u_1)\in C^{\infty}(M)\times C^{\infty}(M)$. 
	
	Before proving Proposition \ref{ill-proposition}, we remark that our construction is purely local, hence we may assume that $M=\T^d$ to avoid needless complications in the argument. The proof of Proposition \ref{ill-proposition} will be divided into several lemmas which we will establish in the rest of this section.

	\subsection{Unstable ODE profile}
Before constructing unstable profile, we need a lemma.
\begin{lemme}\label{generalODE}
Assume that $F\in C^2(\R;\R_+)$, $F'(v)=f(v)$ and $f(0)=0, f^{'}(v)\geq 0$. Then the solution $V$ of the ordinary differential equation
	\begin{equation}\label{ODEgeneral}
	\begin{cases}
	& V''+f(V)=0,\\
	&V(0)=V_0>0, F(V_0)>F(0),  V'(0)=0
	\end{cases}
	\end{equation}
	is a globally defined smooth function. Moreover,  $t\mapsto V(t)$ is periodic.
\end{lemme}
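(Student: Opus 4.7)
The plan is to use the classical energy method for a conservative one-degree-of-freedom Hamiltonian ODE. Multiplying $V'' + f(V) = 0$ by $V'$ and integrating in time gives conservation of the total energy $E(V, V') = \tfrac{1}{2}(V')^2 + F(V)$, so $E(V(t), V'(t)) = F(V_0)$ throughout the maximal interval of existence, which yields the phase-plane identity $(V')^2 = 2(F(V_0) - F(V))$.

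From $f' \geq 0$ and $f(0) = 0$ one reads off $f \geq 0$ on $[0,\infty)$ and $f \leq 0$ on $(-\infty, 0]$, so $F$ attains its minimum at $0$; combined with $F(V_0) > F(0)$ this forces $f(V_0) > 0$ and implies that the sublevel set $\mathcal{S} = \{v \in \R : F(v) \leq F(V_0)\}$ is a closed interval $[V_1, V_0]$ containing $0$, with $V_1 \leq 0$ satisfying $F(V_1) = F(V_0)$. The conservation law then forces $V(t) \in [V_1, V_0]$ and $|V'(t)| \leq \sqrt{2(F(V_0) - F(0))}$ on the maximal interval of existence. These a priori bounds preclude blow-up, so by the standard continuation principle and $f \in C^1(\R)$ the solution is global and $C^2$; a bootstrap using $V'' = -f(V)$ promotes this to $V \in C^\infty(\R)$.

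For periodicity, note that $V''(0) = -f(V_0) < 0$ makes $V$ initially decrease, and on $(V_1, V_0)$ the quadrature formula
\begin{equation*}
t(V) = -\int_{V_0}^{V} \frac{dv}{\sqrt{2(F(V_0) - F(v))}}
\end{equation*}
gives the elapsed time. Because $f(V_0), f(V_1) \neq 0$, the quantity $F(V_0) - F(v)$ vanishes linearly at the two endpoints, so the integrand has an integrable inverse-square-root singularity there, and the half-period $T/2 = \int_{V_1}^{V_0} \frac{dv}{\sqrt{2(F(V_0) - F(v))}}$ is finite. At time $T/2$ the trajectory arrives at $(V_1, 0)$; uniqueness together with the time-reversal symmetry $t \mapsto T - t$ then forces it to trace the lower branch of the level curve back to $(V_0, 0)$ in another $T/2$, producing a $T$-periodic orbit.

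The main technical point is the boundedness of the sublevel set $\mathcal{S}$ to the left, i.e.~the existence of $V_1 > -\infty$ with $F(V_1) = F(V_0)$. Under the bare hypotheses $f' \geq 0$, $f(0) = 0$ alone one could in principle have $\sup_{v \leq 0} F(v) < F(V_0)$ and no left turning point would exist; this step relies on the structural features of $F$ in the intended application (where $f(v) = v^{2k+1}$ is odd, so $F$ is even and coercive). Once confinement is in hand, the remaining steps — energy conservation, smoothness by bootstrap, and finiteness of the period via convergent quadrature — are routine.
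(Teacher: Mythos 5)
Your argument is essentially the same as the paper's: energy conservation $\tfrac12(V')^2+F(V)=F(V_0)$, the monotonicity of $F$ on each side of the origin, phase-plane confinement, and finiteness of the half-period via a convergent quadrature. The paper carries out the periodicity step by explicit continuation (produce $t_0$ with $V(t_0)=0$, then $T_1$ with $V'(T_1)=0$, identify $V(T_1)$ with the left turning point, and double the half-period), whereas you appeal to time-reversal symmetry; the two routes are interchangeable and standard.

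The caveat you raise in your final paragraph is a genuine gap, and it is a gap in the lemma as stated rather than in your proof. With $f$ merely non-decreasing and $f(0)=0$ one may take a $C^1$ function with $f\equiv 0$ on $(-\infty,0]$ (for instance $f(v)=v^2$ for $v\geq 0$, $f(v)=0$ for $v\leq 0$). Then $F$ is constant on $(-\infty,0]$, there is no left turning point $V_1$ with $F(V_1)=F(V_0)$, the trajectory crosses $v=0$ with speed $\sqrt{2(F(V_0)-F(0))}$, and $V(t)\to-\infty$ linearly. The solution is still global (as you observe, $|V'|$ is uniformly bounded, so there is no blow-up), but it is not periodic. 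The paper's proof silently asserts that ``for any $a>F(0)$ there are exactly two roots $v_-<0$, $v_+>0$ of $F(v)=a$'', which is precisely the missing left-coercivity assumption; your identification of the intended application $f(v)=v^{2k+1}$ (odd $f$, hence even and coercive $F$) as the correct resolution is the right call. One further small point in your favour: once a finite $V_1<0$ with $F(V_1)=F(V_0)$ exists, $f(V_1)<0$ is automatic, since $f(V_1)=0$ together with monotonicity of $f$ would force $f\equiv0$ on $[V_1,0]$ and hence $F(V_1)=F(0)<F(V_0)$, a contradiction; so the linear vanishing of $F(V_0)-F(v)$ at both endpoints needed for your quadrature is indeed guaranteed.
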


\begin{proof}
	The proof is standard, and the key points are the following facts:
	\begin{itemize}
		\item $v\mapsto F(v)$ is decreasing for $v<0$ and increasing for $v>0$. For any $a>F(0)$, there are exactly two roots $v_-<0,v_+>0$ such that $F(v_{\pm})=a$.
		\item First integral:
		$$ |V'(t)|^2=2(F(V_0)-F(V(t)).
		$$
	\end{itemize}
From the
	$ V''=-f(V)
	$,
	we know that $V''(0)<0$, hence $V'(t)<0$ for $t>0$ small. Thus
	there exists $t_0>0$ such that $V(t_0)=0$, and $V'(t_0)<0, V''(t_0)=0$. For $t$ slightly larger than $t_0$, $t\mapsto V'(t)$ is increasing while remaining negative. Therefore, there exists $T_1\in(t_0,+\infty]$, such that 
	$$  V'(T_1)=0,\quad V'(t)<0 ,\quad \forall t<T_1.
	$$
	Moreover, we have
	$$ V(T_1)<0,\quad F(V(T_1))=F(V_0)>F(0).
	$$
	From the first integral of $V$, we have
	$$ T_1=\int_{V(T_1)}^{V_0}\frac{dv}{\sqrt{2(F(V_0)-F(v))}}.
	$$ 
	Using the fact that $|f(V_0)|\neq 0$, we deduce that the integral above is finite, thus $T_1<\infty$. Then from the same argument, one conclude that there exists $T\in (T_1,+\infty)$, such that $V'(T)=0, V(T_1)=V_0$. Moreover,
	$$ T-T_1=\int_{V(T_1)}^{V_0}\frac{dV}{\sqrt{2(F(V_0)-F(v))}}=T_1.
	$$
	This implies that the function $V(t)$ is periodic with period $T=2T_1$. This completes the proof of Lemma \ref{generalODE}.
\end{proof}
	
	We will use Lemma \ref{ODEgeneral} to the special case $f(v)=v^{2k+1}, F(v)=\frac{1}{2k+2}v^{2k+2}$. Denote by $V(t)$ the global periodic solution of the ODE
	\begin{equation}\label{ODE-polynome}
V''+V^{2k+1}=0, \quad V(0)=1,V'(0)=0.
\end{equation}
	 We construct nonlinear profiles
	$$ \partial_t^2v_n+v_n^{2k+1}=0, \quad (v_n,\partial_tv_n)|_{t=0}=(\kappa_nn^{\frac{d}{2}-s}\varphi(nx),0),
	$$
	with
	$$ \kappa_n=(\log n)^{-\delta_1}
	$$ 
	to be chosen later. From a scaling property, we have
	\begin{equation}\label{explicit}
	v_n(t,x)=\left(\kappa_nn^{\frac{d}{2}-s}\varphi(nx)\right)V\left(t\left(\kappa_nn^{\frac{d}{2}-s}\varphi(nx)\right)^{k}\right),
	\end{equation}
	Moreover, we have
	$$ \nabla_xv_n(t,x)=\left(\kappa_nn^{\frac{d}{2}-s}n(\nabla\varphi)(nx)\right)\left[kt\lambda_n\varphi(nx)^{k}V'(\lambda_n t\varphi(nx)^{k})+V(\lambda_n t\varphi(nx)^{k})\right].
	$$
	where
	$$ \lambda_n=\left(\kappa_nn^{\frac{d}{2}-s}\right)^{k}.
	$$
	We now estimate various Sobolev norms of $v_n$.
	\begin{lemme}\label{Sobolevbound}
		Let
		$$ t_n=\left((\log n)^{\delta_2}n^{-(\frac{d}{2}-s)}\right)^{k}
		$$
		for some $\delta_2>\delta_1$, then we have for all $t\in[0,t_n]$,
		\begin{enumerate}
			\item $$ \|v_n(t_n)\|_{H^s}\geq c \kappa_n (\lambda_n t_n)^{s}.
			$$
			\item
			$$ \|v_n(t)\|_{H^{\sigma}}\leq C \kappa_n (\lambda_nt_n)^{\sigma}n^{\sigma-s},\forall \sigma=0,1,2,\cdots. 
			$$
			\item 
			$$ \|v_n(t)\|_{L^{\infty}}\leq C \lambda_n^{\frac{1}{k}}= C (\log n)^{-\delta_1}n^{\frac{d}{2}-s}.
			$$
		\end{enumerate}
	\end{lemme}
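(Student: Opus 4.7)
The bound (3) is immediate: by Lemma \ref{generalODE} the profile $V$ is smooth and periodic, hence bounded, so the explicit formula \eqref{explicit} yields $\|v_n(t)\|_{L^\infty}\le \|V\|_{L^\infty}\|\varphi\|_{L^\infty}\kappa_n n^{d/2-s}$, and we recognize $\kappa_n n^{d/2-s}=\lambda_n^{1/k}$.

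For (2) we differentiate \eqref{explicit} directly. Applying Leibniz and Faà di Bruno to $\kappa_n n^{d/2-s}\varphi(nx)V(t\lambda_n\varphi(nx)^k)$, each spatial derivative either hits the outer factor $\varphi(nx)$ (contributing one factor of $n$) or descends into the argument of $V$ via the chain rule (contributing a factor $nt\lambda_n$ alongside derivatives of $V$ and of $\varphi^k$, all uniformly bounded by smoothness of $V$ and of $\varphi$). Since $t\lambda_n\le t_n\lambda_n=(\log n)^{k(\delta_2-\delta_1)}\to\infty$, the dominant term is the one in which all $\sigma$ derivatives descend into $V$'s argument, yielding $\|\partial^\sigma v_n(t)\|_{L^\infty}\lesssim \kappa_n n^{d/2-s}(nt_n\lambda_n)^\sigma$. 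Because $v_n(t,\cdot)$ is supported in a set of measure $\lesssim n^{-d}$ (inherited from $\mathrm{supp}\,\varphi$ by the $n\cdot$ rescaling), passing from $L^\infty$ to $L^2$ costs a factor $n^{-d/2}$, and (2) follows.

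The principal difficulty is the lower bound (1). Setting $\tau_n:=t_n\lambda_n=(\log n)^{k(\delta_2-\delta_1)}$ and using the $y=nx$ rescaling in Plancherel gives $\|v_n(t_n)\|_{\dot H^s}=\kappa_n\|\tilde w_n\|_{\dot H^s}$ where $\tilde w_n(y):=\varphi(y)V(\tau_n\varphi(y)^k)$, so the task reduces to $\|\tilde w_n\|_{\dot H^s}\gtrsim \tau_n^s$; note that this dominates the $L^2$ contribution $\|v_n(t_n)\|_{L^2}\sim \kappa_n n^{-s}$ since $\tau_n^s\gg n^{-s}$. Since $V$ is smooth, periodic with some period $T$, and non-constant (as $V''(0)=-1\ne 0$), expand $V(\theta)=\sum_m c_m e^{im\omega\theta}$ with $\omega=2\pi/T$, and fix some $m_0\ne 0$ with $c_{m_0}\ne 0$. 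The plan is an oscillatory-integral analysis of the single mode
\begin{equation*}
\tilde w_n^{(m_0)}(y):=c_{m_0}\varphi(y)e^{im_0\omega\tau_n\varphi(y)^k}.
\end{equation*}
On the subset of $\mathrm{supp}\,\varphi$ where $\nabla\varphi\ne 0$ the gradient of the phase satisfies $|\nabla(m_0\omega\tau_n\varphi^k)|\sim \tau_n$, so non-stationary phase gives rapid decay of $\widehat{\tilde w_n^{(m_0)}}(\eta)$ outside an annulus $|\eta|\sim\tau_n$, while within that annulus a standard stationary-phase computation (after localizing near a point where $\nabla\varphi$ is non-degenerate) yields $|\widehat{\tilde w_n^{(m_0)}}(\eta)|^2\sim \tau_n^{-d}$ on a set of measure $\sim \tau_n^d$, hence $\|\tilde w_n^{(m_0)}\|_{\dot H^s}^2\gtrsim \tau_n^{2s}$. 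Distinct modes $m\ne m_0$ concentrate on essentially disjoint frequency scales $|\eta|\sim|m|\tau_n$, so their contributions are quasi-orthogonal in $\dot H^s$ and the lower bound passes to the full sum $\tilde w_n$.

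I expect the main obstacle to be precisely this stationary-phase/frequency-localization step for (1): one must choose the profile $\varphi$ (e.g.\ a radial bump with non-vanishing gradient on an annular region and only non-degenerate critical points of $\varphi^k$) so that stationary phase applies cleanly and the quasi-orthogonality between distinct Fourier modes of $V$ can be made rigorous. Once this is done, (1) assembles from the single-mode estimate, and combined with (2)--(3) and the superposition argument described after the lemma, the norm-inflation Proposition \ref{ill-proposition} will follow.
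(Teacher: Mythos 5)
Your parts (2) and (3) agree with the paper, which simply says ``the upper bounds follow directly'' from the explicit formula and the periodicity/boundedness of $V$; your rescaling bookkeeping is the intended argument.

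For the lower bound (1) you and the paper part ways completely. The paper does \emph{not} attack $\|v_n(t_n)\|_{\dot H^s}$ head-on by oscillatory integrals. Instead it interpolates: for $1<s<s_c$ it writes $\|v_n\|_{H^1}\le \|v_n\|_{L^2}^{1-1/s}\|v_n\|_{H^s}^{1/s}$ and solves for $\|v_n\|_{H^s}$, so that the only lower bound needed is on $\|\nabla v_n\|_{L^2}$; for $0<s<1$ it interpolates $\|v_n\|_{H^1}\le \|v_n\|_{H^s}^{1/(2-s)}\|v_n\|_{H^2}^{(1-s)/(2-s)}$, reducing again to an $H^1$ lower bound together with the trivial $H^2$ upper bound from (2). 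The needed $H^1$ lower bound is then handled by Lemma~\ref{geometric}, a purely geometric co-area argument: for a bump $\psi$ and a nonzero periodic $W$, one foliates the region where $\nabla\psi\neq0$ by level sets $\{\psi=s\}$ and averages $|W(\lambda s)|^2$ over a full period in $s$, giving $\|\psi\,W(\lambda\psi)\|_{L^2}\gtrsim 1$ uniformly in large $\lambda$. Applied (with the obvious adaptation) to $\varphi^{k}\nabla\varphi\,V'(\lambda_nt\varphi^{k})$, this gives the lower bound on $\|\nabla v_n\|_{L^2}$, and interpolation finishes. This route is short, fully rigorous, and requires nothing about the Fourier structure of $V$ or of $\varphi$.

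The step in your proposal that I do not believe can be made to work as written is the quasi-orthogonality of the Fourier modes of $V$. The modes $\tilde w_n^{(m)}=c_m\varphi\,e^{im\omega\tau_n\varphi^k}$ do \emph{not} concentrate on disjoint frequency annuli: the stationary set of the phase is $\{\,m\omega k\tau_n\,\varphi^{k-1}\nabla\varphi=\eta\,\}$, and since $\nabla(\varphi^k)$ vanishes somewhere in the support of $\varphi$ (it must, as $\varphi$ is compactly supported), the image of $m\omega k\tau_n\nabla(\varphi^k)$ is a compact set containing $0$. So $\widehat{\tilde w_n^{(m)}}$ has mass in the full region $|\eta|\lesssim |m|\tau_n$, not only on a shell $|\eta|\sim|m|\tau_n$; in particular every mode $m$ contributes at $|\eta|\sim\tau_n$, with amplitude of the same order $\sim|c_m|\tau_n^{-d/2}$ as the $m=1$ mode, and ruling out destructive interference requires a genuinely finer oscillatory-integral analysis (comparing the rapidly $\eta$-dependent stationary-phase phases across different $m$). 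You flag this as the ``main obstacle,'' but it is not a matter of choosing $\varphi$ well -- the overlap is intrinsic. The interpolation-plus-co-area approach in the paper is precisely designed to avoid this issue, and is also what makes the proof so compact. You may want to read Lemma~\ref{geometric} and its use in Case~A/Case~B, then discard the stationary-phase plan.
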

	\begin{proof}
We only prove (1), since the upper bounds follow directly.	
	
		\textbf{Case A: $1<s<s_c=\frac{d}{2}-\frac{\alpha}{k}$:}
		From
		$$ \|v_n(t)\|_{H^1}\leq \|v_n(t)\|_{L^2}^{1-\frac{1}{s}}\|v_n(t)\|_{H^s}^{\frac{1}{s}},
		$$
		we need majorize $\|v_n(t)\|_{L^2}$ and minorize $\|v_n(t)\|_{H^1}$. For the upper bound, from the construction of $v_n$, we have
		$$ \|v_n(t)\|_{L^2}\lesssim \kappa_nn^{-s},\quad 
		 \|\nabla v_n(t)\|_{L^2}\lesssim \lambda_nt \kappa_nn^{1-s}.
		$$
		For the lower bound, it would be sufficient to minorize the dominant part in $\nabla v_n $ (up to some constant) 
		$$ \lambda_n t\kappa_n n^{1-s} \cdot n^{\frac{d}{2}}\left(\varphi^{k}\nabla\varphi V'(\lambda_nt \varphi^k)\right)(nx).
		$$
	To bound it from below, unlike in \cite{Tz-Lecture}, we present a geometric argument.	
		\begin{lemme}\label{geometric}
		Assume that $\psi\in C_c^{\infty}(\R^d)$ and $\psi(x)>0$ for all $|x|<1$. Let $W$ be a periodic continuous function, $W\neq 0$. Then there exist $c_0>0, \lambda_0>0$, such that for all $\lambda\geq \lambda_0$, we have
		$$ \|\psi(x)W(\lambda\psi(x))\|_{L^2(\R^d)}\geq c_0>0.
		$$	
		\end{lemme}

	\begin{proof} 
		The proof of this lemma can be found in \cite{Tz-Lecture}. Here we present a different proof. 
		From the support property of $\psi$, there exist $0<a<b<1$, such that 
		$$
		\mathcal{C}_{a,b}:=\{x\in\mathbb{R}^d:a\leq |x|\leq b\}\subset \{x\in\mathbb{R}^d:|\nabla \psi(x)|\geq c_1 \}
		$$
		for some uniform constant $c_1>0$.  Denote by
		$ \Sigma_{s}:=\{x:\psi(x)=s\}.
		$
		After shrinking $a,b$ if necessary, we may assume that $\psi(\mathcal{C}_{a,b})$ is foliated by $\Sigma_s, s\in[a,b]$. Assume that
		$$ \max_{x\in\mathcal{C}_{a,b}}\psi(x)=B, \quad \min_{x\in\mathcal{C}_{a,b}}\psi(x)=A,
		$$
		By co-aera formula, we have for any continuous function $F$,
		$$ \int_{\mathcal{C}_{a,b}} F(\psi(x))dx=\int_{s\in[A,B]}F(s)ds
		\int_{\Sigma_s}\frac{d\sigma_{\Sigma_s}}{|\nabla\psi(x)|}.
		$$
		Therefore,
		\begin{equation}
		\begin{split}
		\|\psi(x)W(\lambda\psi(x))\|_{L^2(\R^d)}\geq & \int_{\mathcal{C}_{a,b}}|\psi(x)|^2|W(\lambda \psi(x))|^2dx\\
		\geq &\int_{s\in[A,B]} s^2|W(\lambda s)|^2ds\int_{\Sigma_s}\frac{d\sigma_{\Sigma_s}(x)}{|\nabla\psi(x)|}\\
		\geq c &\int_{s\in[A,B]}s^2|W(\lambda s)|^2dx,
		\end{split}
		\end{equation}
		where to the last inequality, we have used the continuity of the map
		$$ s\in[a,b]\mapsto \mathcal{M}^{d-1}(\Sigma_s),
		$$
		and $\mathcal{M}^{d-1}$ denotes the $d-1$ dimensional Hausdorff measure. 	Finally, by changing of variable, we obtain that
		$$ \int_{s\in[A,B]}s^2W(\lambda s)^2ds\geq \frac{c_0'}{\lambda^3}\int_{w\in[\lambda A,\lambda B]}w^2W(w)dw\geq c_0>0.
		$$
		This completes the proof of Lemma \ref{geometric}.
	\end{proof}
Lemma \ref{geometric} is sufficient to finish the analysis in Case A.

		\textbf{Case B:  $0<s<1$: }
	By definition, we have the trivial bound
		$$ \|v_n(t)\|_{H^2}\leq C \kappa_n (\lambda_n t)^2 n^{2-s}.
		$$
	From interpolation, we get
		$$ \|v_n(t)\|_{H^1}\leq C \|v_n(t)\|_{H^s}^{\frac{1}{2-s}}\|v_n(t)\|_{H^2}^{\frac{1-s}{2-s}}.
		$$
		Thus
		$$ \|v_n(t_n)\|_{H^s}\geq c \kappa_n(\lambda_n t_n)^s.
		$$
		This completes the proof of Lemma \ref{Sobolevbound}.
	\end{proof}

	\subsection{Energy estimate}

	Denote by $(u_n,\partial_tu_n)$ the solution to
	$$ \partial_t^2u+D^{2\alpha}u+u^{2k+1}=0,
	$$
	subject to the initial data
	$$ (u,\partial_tu)|_{t=0}=(u_0,u_1)+(v_{n}(0),0).
	$$
	\begin{lemme}\label{Energyestimate}
		Fix $k\in\mathbb{N}$ and $\alpha>\frac{d}{2}$.	There exists $\delta_2>\delta_1>0$ and $\epsilon>0$ such that for $t_n=(\log n)^{k\delta_2}n^{-k\left(\frac{d}{2}-s\right)}$, the following holds true.
		\begin{enumerate}
			\item If $\frac{d}{2}-\frac{2\alpha}{2k-1}\leq s<\frac{d}{2}-\frac{\alpha}{k}$, then for any $(u_0,u_1)\in C^{\infty}(\T^d)\times C^{\infty}(\T^d)$,
			$$ \sup_{t\in[0,t_n]}\|u_n(t)-S(t)(u_0,u_1)-v_n(t)\|_{H^s(M)}\leq n^{-\epsilon}.
			$$ 
			\item If $s<\frac{d}{2}-\frac{2\alpha}{2k-1}$, and $(u_0,u_1)=(0,0)$, we have
			$$ \sup_{t\in[0,t_n]}\|u_n(t)-v_n(t)\|_{H^s(M)}\leq n^{-\epsilon}.
			$$
		\end{enumerate}	
	\end{lemme}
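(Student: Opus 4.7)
The plan is to set $w_n := u_n - S(t)(u_0,u_1) - v_n$ (with $S(t)(u_0,u_1)\equiv 0$ in case~(2)). Since $S(t)(u_0,u_1)$ solves the linear fractional wave equation and $v_n$ satisfies $\partial_t^2 v_n + v_n^{2k+1}=0$, substituting $u_n = S(t)(u_0,u_1)+v_n+w_n$ into the equation produces
$$
(\partial_t^2 + D^{2\alpha})w_n \;=\; -D^{2\alpha}v_n - \mathcal{N}, \qquad w_n(0)=\partial_t w_n(0)=0,
$$
with $\mathcal{N} := (S(t)(u_0,u_1)+v_n+w_n)^{2k+1} - v_n^{2k+1}$. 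The linear wave energy identity at regularity $s$ then gives
$$
\|w_n(t)\|_{H^s} + \|\partial_t w_n(t)\|_{H^{s-\alpha}} \;\lesssim\; \int_0^t \bigl(\|v_n\|_{H^{s+\alpha}} + \|\mathcal{N}\|_{H^{s-\alpha}}\bigr)\,dt',
$$
so the task reduces to bounding the two source terms on $[0,t_n]$.

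For the dispersive error, Lemma~\ref{Sobolevbound}(2) (extended to fractional exponents by interpolation) gives $\|v_n\|_{H^{s+\alpha}} \lesssim \kappa_n(\lambda_n t_n)^{s+\alpha} n^{\alpha}$, and since $\lambda_n t_n = (\log n)^{k(\delta_2-\delta_1)}$ is only logarithmic, integration over $[0,t_n]$ produces $(\log n)^{A}\, n^{\alpha - k(d/2-s)}$, which is $\leq n^{-\epsilon}$ because of the standing hypothesis $s<d/2-\alpha/k$. For the nonlinear source I expand binomially
$$
\mathcal{N} \;=\; \sum_{\substack{m+l+i=2k+1\\ m\leq 2k}} c_{m,l,i}\, v_n^m\, \bigl(S(t)(u_0,u_1)\bigr)^l\, w_n^i,
$$
and estimate each summand in $H^{s-\alpha}$ via the embedding $L^p \hookrightarrow H^{s-\alpha}$ with $1/p = 1/2+(\alpha-s)/d$. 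The crucial gain comes from the spatial localization of $v_n$ in a ball of radius $1/n$, which yields $\|v_n^m\|_{L^p} = \|v_n\|_{L^{mp}}^m \lesssim \lambda_n^{m/k}\, n^{-d/p}$. Terms containing $w_n$ are absorbed by Gronwall under the bootstrap ansatz $\|w_n\|_{H^s}\leq n^{-\epsilon/2}$, combined with the Sobolev embedding $H^s\hookrightarrow L^q$, $1/q=1/2-s/d$; smoothness of $(u_0,u_1)$ provides $\|S(t)(u_0,u_1)\|_{L^\infty}\leq C$ uniformly in $t$.

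The decisive term in case~(1) is the purely $(v_n,\phi)$-type source $v_n^{2k}\,S(t)(u_0,u_1)$: integrating its $H^{s-\alpha}$ norm over $[0,t_n]$ yields an exponent of $n$ that is non-positive precisely on the range $s\geq d/2 - 2\alpha/(2k-1)$, with $\kappa_n = (\log n)^{-\delta_1}$ providing a logarithmic safety margin. In case~(2) we have $S(t)(u_0,u_1)\equiv 0$, so every purely $(v_n,\phi)$-type source vanishes and only the terms $v_n^m w_n^i$ with $i\geq 1$ remain; these close by Gronwall for the whole range $s<d/2-\alpha/k$. After fixing $0<\delta_1<\delta_2$ small enough that the logarithmic prefactors do not spoil the polynomial gain, a standard continuity/bootstrap argument extends the estimate to the full interval $[0,t_n]$. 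The main obstacle is the sharp bookkeeping for the worst source $v_n^{2k}\,S(t)(u_0,u_1)$: one must exploit both the $L^p$-localization gain $n^{-d/p}$ coming from the support of $v_n$ and the negative-regularity duality $s-\alpha<0$, and it is exactly this calculation that produces the threshold $s\geq d/2 - 2\alpha/(2k-1)$ of case~(1).
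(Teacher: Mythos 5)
Your decomposition $w_n = u_n - S(t)(u_0,u_1) - v_n$ and the overall Gronwall/bootstrap plan match the paper, but you run the energy estimate at the wrong regularity level, and as a result the self-interaction of $w_n$ does not close. The paper works with a rescaled $H^\alpha$ energy
$$E_n(w)(t)=\frac{1}{n^{2(\alpha-s)}}\bigl(\|\partial_tw(t)\|_{L^2}^2+\|D^{\alpha}w(t)\|_{L^2}^2\bigr),$$
which is comparable to $\|w\|_{H^s}$ at frequency $\sim n$ but, crucially, also controls $\|w_n\|_{L^\infty}$ via the interpolation $\|w_n\|_{L^\infty}\lesssim\|w_n\|_{H^\alpha}^{d/(2\alpha)}\|w_n\|_{L^2}^{1-d/(2\alpha)}$ together with $w_n(t)=\int_0^t\partial_tw_n$. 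That $L^\infty$ control is what allows the $w_n^{2k+1}$ term in the equation to be absorbed into a Gronwall with a small integrated coefficient. You instead propose to bound $\|w_n(t)\|_{H^s}+\|\partial_tw_n(t)\|_{H^{s-\alpha}}$ with the bootstrap $\|w_n\|_{H^s}\leq n^{-\epsilon/2}$ and the embedding $H^s\hookrightarrow L^q$, $1/q=1/2-s/d$. To bound $\|w_n^{2k+1}\|_{H^{s-\alpha}}$ this way one needs the embedding $L^{q/(2k+1)}\hookrightarrow H^{s-\alpha}$, i.e. $(2k+1)(1/2-s/d)\leq 1/2+(\alpha-s)/d$, which rearranges to $s\geq d/2-\alpha/(2k)$. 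But everywhere in the lemma $s<d/2-\alpha/k<d/2-\alpha/(2k)$, so this never holds: the self-interaction is supercritical for your bootstrap norm, and the loop cannot be closed. This is precisely the gap between $s$ and $\alpha$ that the semiclassical energy is designed to supply, and it is a genuine missing idea in your proposal, not a technicality.

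Two secondary issues. First, the embedding $L^p\hookrightarrow H^{s-\alpha}$ with $1/p=1/2+(\alpha-s)/d$ (dual to $H^{\alpha-s}\hookrightarrow L^{p'}$) requires $\alpha-s<d/2$, i.e. $s>\alpha-d/2$. That is not guaranteed by the hypotheses of the lemma: since $\frac{d}{2}-\frac{2\alpha}{2k-1}<\alpha-\frac{d}{2}$ whenever $\alpha>\frac{(2k-1)d}{2k+1}$, part of the allowed range of $s$ can violate this. Second, your claim that the threshold $s\geq\frac{d}{2}-\frac{2\alpha}{2k-1}$ comes from integrating the $v_n^{2k}u_L$ source is not what your own calculation gives. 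If you carry it through, the time-integrated exponent is $(k-1)(d/2-s)-\alpha$, which is negative under the weaker condition $s>\frac{d}{2}-\frac{\alpha}{k-1}$. In the paper the threshold arises from a coarser comparison \emph{before} time integration: the $L^2$ size of the $u_L v_n^{2k}$ source, divided by $n^{\alpha-s}$, is $n^{(2k-1)(d/2-s)-\alpha}$, and the paper requires this to be dominated by the dispersive source $\sim n^{\alpha}$, which gives $s\geq\frac{d}{2}-\frac{2\alpha}{2k-1}$.
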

	
	\begin{proof}
		Denote by 
		$$ w_n=u_n-S(t)(u_0,u_1)-v_n, \quad  (w_n,\partial_tw_n)|_{t=0}=(0,0).
		$$
		
		Define the semi-classical energy
		$$ E_n(w)(t):=\frac{1}{n^{2(\alpha-s)}}\|\partial_tw(t)\|_{L^2}^2+\frac{1}{n^{2(\alpha-s)}}\|D^{\alpha}w(t)\|_{L^2}^2.
		$$
We have that	$w_n$ satisfies the equation:
		$$ \partial_t^2w_n+D^{2\alpha}w_n=-D^{2\alpha}v_n-(f(v_n+u_L+w_n)-f(v_n)),
		$$
where
		$$ u_L(t)=S(t)(u_0,u_1).
		$$
		Multiplying by $\partial_tw_n$ and integrating over $\mathbb{T}^d$ to both side, we  obtain that
		\begin{equation}\label{energyderive1}
		\begin{split}
		\frac{1}{2}\frac{d}{dt}\left(\|\partial_tw_n(t)\|_{L^2}^2+\|D^{\alpha}w_n(t)\|_{L^2}^2\right)\leq & \left|\int_{\mathbb{T}^d}\partial_tw_n\cdot F_n(t)dx\right|\\
		\leq &\|\partial_tw_n\|_{L^2}\|F_n(t)\|_{L^2},
		\end{split}
		\end{equation}
	where
		$ F_n(t)=D^{2\alpha}v_n+(f(v_n+u_L+w_n)-f(v_n)).
		$
		Thus
		$$ \left|\frac{d}{dt}E_n(w_n(t))\right|\leq \frac{C}{n^{\alpha-s}}(E_n(w_n(t)))^{1/2}\|F_n(t)\|_{L^2}.
		$$
	To simplify the notation,	we denote by
		$$ e_n(t):=\sup_{0\leq \tau\leq t}E_n(w_n(\tau))^{1/2}.
		$$
		From Lemma \ref{Sobolevbound}, for all $0\leq t\leq t_n$,  
		$$ \|F_n(t)\|_{L^2}\leq C
		\kappa_n(\lambda_nt_n)^{2\alpha}n^{2\alpha-s}+\|G_n(t)\|_{L^2}\leq C(\log n)^{\frac{k}{\alpha}\delta_2-\left(\frac{k}{\alpha}+1\right)\delta_1}n^{2\alpha-s}+\|G_n(t)\|_{L^2},
		$$
		with
		$ G_n=f(v_n+u_L+w_n)-f(v_n).
		$
		Thanks to $u_L\in C^{\infty}(\R\times\T^d)$, we have
		
		\begin{equation}\label{Gn}
		|G_n|\leq C |w_n|(|v_n|^{2k}+|w_n|^{2k}+1)+C|u_L||v_n|^{2k}.
		\end{equation}
		By writing (using $w_n(0,x)=0$)
		$$ w_n(t,x)=\int_0^t\partial_tw_n(\tau,x)d\tau,
		$$
		we obtain that
		\begin{equation}
		\begin{split}
		\|G_n(t)\|_{L^2}\leq &C\int_0^t\|\partial_t w_n(\tau)\|_{L^2}d\tau\left(\|v_n(t)\|_{L^{\infty}}^{2k}+\|w_n(t)\|_{L^{\infty}}^{2k}\right)+n^{(2k-1)\left(\frac{d}{2}-s\right)-s}\\
		\leq& Cn^{\alpha-s}te_n(t)\left(\|v_n(t)\|_{L^{\infty}}^{2k}+\|w_n(t)\|_{L^{\infty}}^{2k}\right)+n^{(2k-1)\left(\frac{d}{2}-s\right)-s}.
		\end{split}
		\end{equation}
		From Lemma \ref{Sobolevbound}, we majorize the terms involving $v_n$ by
		$$ Cn^{\alpha-s}te_n(t)(\log n)^{-2k\delta_1}n^{2k\left(\frac{d}{2}-s\right)}.
		$$
		For the terms involving $w_n$, using the assumption that $\alpha>\frac{d}{2}$, we have 
		$$ \|w_n(t)\|_{L^{\infty}}\leq \|w_n(t)\|_{H^{\alpha}}^{\frac{d}{2\alpha}}\|w_n(t)\|_{L^2}^{1-\frac{d}{2\alpha}}\leq C(n^{\alpha-s}e_n(t))^{\frac{d}{2\alpha}}\left(n^{\alpha-s}te_n(t)\right)^{1-\frac{d}{2\alpha}}.
		$$ 
		Therefore,
		\begin{equation*}
			\begin{split}
				n^{-(\alpha-s)}\|G_n(t)\|_{L^2}\leq & Cn^{(2k-1)\left(\frac{d}{2}-s\right)-\alpha}\\+& Cte_n(t)n^{2k\left(\frac{d}{2}-s\right)}
				+ Cte_n(t)(n^{\alpha-s}e_n(t))^{2k}t^{2k\left(1-\frac{d}{2\alpha}\right)}
			\end{split}
		\end{equation*}
		
		Thus
		\begin{equation}\label{energybound1}
		\begin{split}
		\left|\frac{d}{dt}(E_n(w_n(t))^{1/2})\right|\leq & C(\log n)^{\frac{k}{\alpha}\delta_2-\left(\frac{k}{\alpha}+1\right)\delta_1}n^{\alpha}+Cn^{(2k-1)\left(\frac{d}{2}-s\right)-\alpha}\\+&Ct_ne_n(t)n^{2k\left(\frac{d}{2}-s\right)}
		+Ct_n^{1+2k\left(1-\frac{d}{2\alpha}\right)}e_n(t)^{2k+1}n^{2k(\alpha-s)}.
		\end{split}
		\end{equation}
		
		By monotonicity and definition, we observe that
		$$ \frac{de_n(t)}{dt}\leq \left|\frac{d}{dt}E_n(w_n(t))^{1/2}\right|.
		$$
		Thus
		
		\begin{equation}\label{energybound2}
		\begin{split}
		\frac{de_n}{dt}\leq & C(\log n)^{\frac{k}{\alpha}\delta_2-\left(\frac{k}{\alpha}+1\right)\delta_1}n^{\alpha}+Cn^{(2k-1)\left(\frac{d}{2}-s\right)-\alpha}\\+&Ct_ne_n(t)n^{2k\left(\frac{d}{2}-s\right)}
		+Ct_n^{1+2k\left(1-\frac{d}{2\alpha}\right)}e_n(t)^{2k+1}n^{2k(\alpha-s)}.
		\end{split}
		\end{equation}
Note that
		$$ t_n n^{2k\left(\frac{d}{2}-s\right)}\leq C n^{k\left(\frac{d}{2}-s\right)}, \quad t_n^{1+2k\left(1-\frac{d}{2\alpha}\right)}n^{2k(\alpha-s)}\leq C n^{k\left[\left(\alpha-\frac{d}{2}\right)+(\alpha-s)-\frac{2k}{\alpha}\left(\frac{d}{2}-s\right)\left(\alpha-\frac{d}{2}\right)\right]}
		$$
and
		\begin{equation*}
			\begin{split}
				&\left(\alpha-\frac{d}{2}\right)+(\alpha-s)-\frac{2k}{\alpha}\left(\frac{d}{2}-s\right)\left(\alpha-\frac{d}{2}\right)\\
				=&2\left(1-\frac{k}{\alpha}\left(\frac{d}{2}-s\right)\right)\left(\alpha-\frac{d}{2}\right)+\left(\frac{d}{2}-s\right)
				<\left(\frac{d}{2}-s\right),
			\end{split}
		\end{equation*}
		thanks to 
		$$  s<\frac{d}{2}-\frac{\alpha}{k},\quad \alpha>\frac{d}{2}.
		$$
		Consequently, we obtain that
		\begin{equation}\label{energybound3}
		\begin{split}
		\frac{de_n}{dt}\leq & C(\log n)^{\frac{k}{\alpha}\delta_2-\left(\frac{k}{\alpha}+1\right)\delta_1}n^{\alpha}+Cn^{(2k-1)\left(\frac{d}{2}-s\right)-\alpha}\\+&Cn^{k\left(\frac{d}{2}-s\right)}(e_n(t)+e_n(t)^{2k+1}).
		\end{split}
		\end{equation}	
	\textbf{Case 1: $\frac{d}{2}-\frac{2\alpha}{2k-1}\leq s<\frac{d}{2}-\frac{\alpha}{k}$.}
		
		In this case, 
		$$ n^{(2k-1)\left(\frac{d}{2}-s\right)-\alpha}<n^{\alpha},
		$$
		hence for all $t\in[0,t_n]$
		
		\begin{equation}\label{energybound2.1}
		\begin{split}
		\frac{de_n}{dt}\leq  C(\log n)^{\frac{k}{\alpha}\delta_2-\left(\frac{k}{\alpha}+1\right)\delta_1}n^{\alpha}+Cn^{k\left(\frac{d}{2}-s\right)}(e_n(t)+e_n(t)^{2k+1}).
		\end{split}
		\end{equation}
		Since $e_n(0)=0$, we may assume that $e_n(t)\leq 1$. Using Gronwall inequality and bootstrap argument, we finally obtain that
		$$ e_n(t)\leq C(\log n)^{\left(k\delta_2-\left(\frac{k}{\alpha}+1\right)\delta_1\right)}n^{\alpha-k\left(\frac{d}{2}-s\right)}e^{Ct(\log n)^{2k\delta_2}}\leq C n^{-\epsilon},
		$$
		provided that $\delta_2$ is small enough, thanks to $s<\frac{d}{2}-\frac{\alpha}{k}$. The remaining part follows from interpolation, since $E_n(w_n(t))$ essentially controls the norm $H^s$(for high frequencies).

	\textbf{Case 2: $s<\frac{d}{2}-\frac{2\alpha}{2k-1}.$}
		
		Notice that under the additional assumption that $(u_0,u_1)=0$, the term $|u_L||v_n|^{2k}$ in \eqref{Gn}
		does not appear. Hence we have the same estimate \eqref{energybound2.1}. The rest of the arguments are the same. The proof of Lemma \ref{Energyestimate} is now completed.
		\end{proof}	
As an immediate consequence of Lemma \ref{Energyestimate}, the proof of Proposition \ref{ill-proposition} is also completed.
	
	\section{ Polynomial nonlinearity in the case of a general randomisation}
	We study the fractional Klein-Gordon equation with polynomial nonlinearity:
	\begin{equation}\label{FNLKG}
	\partial_t^2u+D^{2\alpha}u+u^{2k+1}=0, \quad (u,\partial_tu)|_{t=0}=(u_0^{\omega},v_0^{\omega}),
	\end{equation}
	for general randomized initial data \eqref{randomization-general}. We will sketch the proof of Theorem \ref{thm-general}. Comparing to the previous situation where we globalize the solution via invariance of the Gibbs measure, now we construct global dynamics by energy method, in the spirit of \cite{BT-JEMS}. The key point is to establish a probabilistic energy a priori estimate.  
	
	To state the key proposition, we decompose the solution into linear evolution part and nonlinear part:
	$$ u(t)=z(t)+w(t),\quad  z(t)=S(t)(u_0,v_0).
	$$
	\begin{proposition}\label{energy-estimate}
		Assume that $s>\frac{(k-1)\alpha}{k}$, then we have for some $s_1<s$, close enough to $s$,
		\begin{equation}
		\begin{split}
		\sup_{0\leq t\leq T}E[w](t)\leq C_k(1+\|D^{s_1}z\|_{L^{\infty}([0,T]\times \T^d)}^{2k+2})\exp\left(T+\|D^{s_1}z\|_{L^{\infty}([0,T]\times \T^d)}^{2k+2} T\right).
		\end{split}
		\end{equation}
	\end{proposition}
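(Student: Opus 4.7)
The plan is to adapt the modified-energy scheme of Oh--Pocovnicu. Since $u=z+w$ with $z$ solving the free wave equation, the nonlinear part $w$ satisfies
$$\partial_t^2 w + D^{2\alpha} w + (z+w)^{2k+1} = 0, \qquad (w,\partial_t w)|_{t=0} = (0,0).$$
A direct differentiation of $E[w]$ produces the source $-\int \partial_t w\,[(z+w)^{2k+1}-w^{2k+1}]$, and a naive binomial expansion with H\"older yields a Gronwall inequality that is superlinear in $E[w]$ for $k\geq 2$, so it cannot close. I would therefore introduce the modified energy
$$\mathcal{E}[w] := \frac{1}{2}\|\partial_t w\|_{L^2}^2 + \frac{1}{2}\|D^\alpha w\|_{L^2}^2 + \frac{1}{2k+2}\int_{\T^d}\bigl[(z+w)^{2k+2} - z^{2k+2}\bigr]\,dx.$$
By the pointwise inequality $|w|^{2k+2}\lesssim|z+w|^{2k+2}+|z|^{2k+2}$, $\mathcal{E}[w]$ is equivalent to $E[w]$ modulo an additive term of size $\|z\|_{L^\infty}^{2k+2}\lesssim(1+\|D^{s_1}z\|_{L^\infty})^{2k+2}$.

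The key is then the cancellation identity obtained by a direct computation using the equation for $w$ and the fact that $z$ is a free wave:
$$\frac{d}{dt}\mathcal{E}[w] = \int_{\T^d}\bigl[(z+w)^{2k+1}-z^{2k+1}\bigr]\,\partial_t z\,dx = \sum_{j=0}^{2k}\binom{2k+1}{j}\int_{\T^d} z^j w^{2k+1-j}\,\partial_t z\,dx,$$
in which the problematic $\partial_t w$ contributions vanish identically; only $\partial_t z$ remains. This is the advantage over direct Gronwall on $E[w]$.

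The remaining task is to bound each $\int \partial_t z\cdot z^j w^{2k+1-j}$ by $C_k(1+\|D^{s_1}z\|_{L^\infty})^{2k+2}(1+\mathcal{E}[w])$. Since $\partial_t z$ costs exactly $\alpha$ derivatives relative to $z$, I would pair it by duality against the more regular product, placing $\partial_t z$ in a negative-order $L^\infty$-type space (almost surely controlled on $\T^d$ by the given $\|D^{s_1}z\|_{L^\infty}$-quantity, using the wave structure $\partial_t z = -D^\alpha\sin(tD^\alpha)u_0 + \cos(tD^\alpha)v_0$) and estimating $\|z^j w^{2k+1-j}\|$ in the dual $W^{\alpha-s_1,1}$-type norm via a Kato--Ponce / Moser fractional Leibniz. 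Distributing the $\alpha-s_1$ derivatives: factors of $z$ that receive them yield powers of $\|D^{s_1}z\|_{L^\infty}$ (absorbing $D^{\alpha-s_1}$ into the given bound); one factor of $w$ that absorbs them contributes $\|D^{\alpha-s_1}w\|_{L^2}$ interpolated between $\|w\|_{L^2}$ and $\|D^\alpha w\|_{L^2}^{(\alpha-s_1)/\alpha}$; remaining $w$-factors are placed in $L^{2k+2}$ via $\|w\|_{L^{2k+2}}^{2k+2}\leq(2k+2)\mathcal{E}[w]$, or in $L^\infty$ via $H^\alpha\hookrightarrow L^\infty$ (here $\alpha>d/2$). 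The condition $\alpha-s_1<\alpha/k$, equivalent to $s_1>(k-1)\alpha/k$ and available because $s>(k-1)\alpha/k$, is precisely what prevents the exponent of $\mathcal{E}[w]$ in any resulting term from exceeding~$1$.

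Combining these bounds yields $\frac{d}{dt}\mathcal{E}[w]\leq C_k(1+\|D^{s_1}z\|_{L^\infty})^{2k+2}(1+\mathcal{E}[w])$, and Gronwall from $\mathcal{E}[w](0)=0$ delivers the claimed exponential bound, transferred back to $E[w]$ by the equivalence from the first paragraph. The main obstacle is the bookkeeping in the Kato--Ponce step: for each $j\in\{0,\dots,2k\}$ and each way of distributing the $\alpha-s_1$ derivatives across $z^j w^{2k+1-j}$, one must verify that the accumulated powers of $\|w\|_{L^{2k+2}}$, $\|w\|_{L^\infty}$, and $\|D^\alpha w\|_{L^2}$ balance so that $\mathcal{E}[w]$ appears at most to the first power, with the worst configuration saturating the interpolation bound exactly at the threshold $s_1=(k-1)\alpha/k$; the sharpness of this exponent is what dictates the regularity restriction in the hypothesis.
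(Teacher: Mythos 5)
Your argument is correct and rests on the same Oh--Pocovnicu energy-method core as the paper, but it is organised differently in two places, both worth noting. First, you introduce the modified energy $\mathcal{E}[w]$ whose time derivative directly kills the problematic $\partial_t w$ pairings, leaving only $\int_{\T^d}\bigl[(z+w)^{2k+1}-z^{2k+1}\bigr]\partial_t z$; the paper keeps the naive energy $E[w]$, computes $\frac{dE}{dt}$, and integrates by parts in $t$, producing an interior term (which under the reindexing $m=j+1$ is exactly your $\sum_j \int z^j w^{2k+1-j}\partial_t z$) plus a boundary term II that plays the same role as your additive $\mathcal{E}$-versus-$E$ comparison (controlled by Young's inequality in both treatments). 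These are mathematically equivalent bookkeeping devices; the modified energy is arguably the cleaner presentation and is closer to the original Oh--Pocovnicu formulation. Second, for the fractional-Leibniz step the paper runs an explicit Littlewood--Paley dyadic decomposition with Bernstein and Gagliardo--Nirenberg, placing $D^{\alpha-s_1}w$ in $L^{k+1}$; you invoke Kato--Ponce and place $D^{\alpha-s_1}w$ in $L^2$. Both routes close, and a direct computation shows that in either scheme the total power of the energy in the worst term hits exactly $1$ at the threshold $s_1=(k-1)\alpha/k$, confirming your remark about the sharpness of the exponent. One point to make explicit: when $D^{\alpha-s_1}$ lands on a $z$-factor (the $j\ge 1$ terms in your expansion), controlling $D^{\alpha-s_1}z$ by $\|D^{s_1}z\|_{L^\infty}$ requires $\alpha-s_1\le s_1$, i.e. $s_1\ge\alpha/2$. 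This is automatic for $k\ge 2$ since $(k-1)\alpha/k\ge\alpha/2$, but the case $k=1$ needs a separate (and easier) word, as the paper observes at the end of its proof. You should also note that $\mathcal{E}[w]$ is not manifestly nonnegative, so before applying Gronwall one should work with $\mathcal{E}[w]+C\|z\|_{L^\infty}^{2k+2}\gtrsim E[w]\ge 0$; this is harmless and implicit in your equivalence remark, but deserves a sentence.
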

 Note that  when $d=3,\alpha=1, k=2$, the restriction $s>\frac{1}{2}$ coincides with the one in \cite{OTh-Po}. Before giving a proof, 
 we briefly recall the idea of Oh-Pocovnicu, which will give us the restriction $s>\frac{(k-1)\alpha}{k}$ in Proposition~\ref{energy-estimate}.
After integrating by part in $t$, the worst term in the expression of $E[w](t)$ is
$$ \int_{0}^t\int_{\T^d}w^{2k+1}\partial_tz=\int_{0}^t\int_{\T^d}w^{2k+1}D^{\alpha}z. $$
We could then distribute $D^{\alpha-s}$ to the $w^{2k+1}$ side. Then in principle, we need estimate
\begin{equation}\label{**} \int_0^t\|w^{2k}(t)D^{\alpha-s}w(t)\|_{L^1(M)}dt.
\end{equation}
Now we put $D^{\alpha-s}w\in L^{k+1}(\T^d)$ and $w^{2k}\in L^{\frac{k+1}{k}}(\T^d)$. Then by the Gagliardo-Nirenberg inequality, we have
$$ \|D^{\alpha-s}w\|_{L^{k+1}}\leq C\|D^{\alpha}w\|_{L^2}^{\theta} \|w\|_{L^{2k+2}}^{1-\theta},
$$
with
$$ \frac{1}{k+1}=\frac{\alpha-s}{d}+\left(\frac{1}{2}-\frac{\alpha}{d}\right)\theta+\frac{1-\theta}{2(k+1)}.
$$
Thus we majorize \eqref{**} by 
$$ C_TE[w]^{\frac{k}{k+1}}\cdot E[w]^{\frac{\theta}{2}+\frac{1-\theta}{2k+2}}.
$$ 
To apply Grownwall, the restriction $\frac{\theta}{2}+\frac{1-\theta}{2k+2}\leq \frac{1}{k+1}$ must hold, hence (modulo the end-point issue)
$ s> \frac{k-1}{k}\alpha.
$

	\begin{proof}[Proof of Proposition \ref{energy-estimate}]
	We have that	$w$ satisfies the equation
		$$ \partial_t^2w+D^{2\alpha}w +(z+w)^{2k+1}=0. 
		$$
		Denote by the energy of $w$
		$$E[w](t)=\int_{\T^d}\frac{1}{2}\left(|\partial_tw|^2+|D^{\alpha}w|^2\right)+\int_{\T^d}\frac{1}{2k+2}w^{2k+2},
		$$
		and we have 
		\begin{equation} \frac{dE}{dt}=-\sum_{m=1}^{2k+2}\binom{2k+2}{m}\int_{\T^d}\frac{1}{2k+2-m}\partial_t(w^{2k+2-m})z^m.
		\end{equation}
		This yields
		\begin{equation}
		\begin{split}
		E[w](t)=&\sum_{m=1}^{2k+2}\frac{\binom{2k+2}{m}}{2k+2-m}\int_0^t\int_{\T^d} w^{2k+2-m}\partial_t (z^m)dxdt'\\
		-&\sum_{m=1}^{2k+2}\frac{\binom{2k+2}{m}}{2k+2-m}\int_{\T^d}z^m(t,\cdot)w^{2k+2-m}(t,\cdot) dx\\
		=:& \mathrm{I}+\mathrm{II}.
		\end{split}
		\end{equation}
		
		Each summation in the  second term II can be bounded by Young's inequality as:
		$$ \epsilon\int_{\T^d} |w(t)|^{2k+2}dx+C(\epsilon)\int_{\T^d}|z(t)|^{2k+2}dx\leq \epsilon E[w](t) +C(\epsilon)\|z(t)\|_{L^{2k+2}}^{2k+2}
		$$
		for any $\epsilon>0$.
		Thus
		$$ \mathrm{II}\leq C(\epsilon)\cdot C_{k}E[w](t)+C(\epsilon)C_k\|z(t)\|_{L^{2k+2}}^{2k+2}.
		$$
		Now we estimate $\mathrm{I}$. Noticing that $\partial_t (z^m)=mz^{m-1} \partial_t z$, the term $\int_{\T^d}w^{2k+2-m}\cdot \partial_t(z^m)$ is of the form
		$$ \int_{\T^d} w^{p+1-m}Z\cdot D^{\alpha} U
		$$
		with $Z=z^{m-1}, U=D^{-\alpha}\partial_t z$,
		and $p=2k+1$. Note that if $m>1$, for any $\sigma_1>\sigma_2$
		$$ \|D^{\sigma_2}Z\|_{L^{\infty}(\T^d)}\leq C_{\sigma_1,\sigma_2}\|D^{\sigma_1}z\|_{L^{\infty}(\T^d)}\|z\|_{L^{\infty}(\T^d)}^{m-2},
		$$ 
		and
		$$ \|D^{\sigma_1}U\|_{L^{\infty}(\T^d)}\leq C\|D^{\sigma_1}z\|_{L^{\infty}(\T^d)}.
		$$
		
		From Littlewood-Paley decomposition, we could write
		\begin{equation*}
			\begin{split}
				\int_{\T^d}w^{p+1-m}Z\cdot Z\cdot D^{\alpha} U=& \sum_{N,N': N'/2\leq N\leq 2N'} \int_{\T^d} P_N(w^{p+1-m}\cdot Z) \cdot P_N(D^{\alpha} Z)\\
				\leq & \sum_{N\geq 1} N^{\alpha-s}\|D^s U\|_{L^{\infty}(\T^d)}\|P_N(w^{p+1-m}Z)\|_{L^1(\T^d)}.
			\end{split}
		\end{equation*}
We further decompose 
		\begin{equation*}
			\begin{split}
				&P_N\left(w^{p+1-m}Z\right)\\:=&\sum_{N_1\geq N_2\geq \cdots N_{p+1-m}}\sum_{j_0=1}^{p+1-m} \sum_{M\leq N_{j_0}}P_{N}\left(\prod_{\nu=1}^{p+1-m}P_{N_{\nu}}w\cdot P_M Z\right)\\
				+&\sum_{N_1\geq N_2\geq \cdots N_{p+1-m}}\sum_{M\geq N_{1}}P_N\left(\prod_{\nu=1}^{p+1-m}P_{N_{\nu}}w\cdot P_M Z\right)\\
				=&:\mathrm{I'}+\mathrm{II'}.
			\end{split}
		\end{equation*}
		Applying H\"older inequality, we obtain
		\begin{equation}\label{*}
			\begin{split}
				\|\mathrm{I'}\|_{L^1(\T^d)}  \leq  \sum_{N_1\geq N_2\geq \cdots N_{p+1-m}; N_1\geq \frac{N}{2(p+1)}}\sum_{j_0=1}^{p+1-m}\sum_{M\leq N_{j_0}} &\|P_{N_1}w\|_{L^{\frac{p+1}{m+1}}(\T^d)}\|P_M Z\|_{L^{\infty}(\T^d)}\\ \times &\left\|\prod_{j=2}^{p+1-m} P_{N_{\nu}}w\right\|_{L^{\frac{p+1}{p-m}}(\T^d)}.
			\end{split}
		\end{equation}
	From interpolation and Bernstein, 
		$$\|P_{N_1}w\|_{L^{\frac{p+1}{m+1}}(\T^d)}\leq N_1^{-\frac{2m\alpha}{p-1}}\|w\|_{H^{\alpha}(\T^d)}^{\frac{2m}{p+1}}\|w\|_{L^{p+1}(\T^d)}^{\frac{p-1-2m}{p-1}},
		$$
		we have
		\begin{equation*}
		\begin{split}
			\eqref{*}	\leq &\sum_{N_1\geq N_2\geq \cdots N_{p+1-m}; N_1\geq \frac{N}{2(p+1)}}\sum_{j_0=1}^{p+1-m}\sum_{M\leq N_{j_0}} CN_1^{-\frac{2m\alpha}{p-1}}\|P_MZ\|_{L^{\infty}(\T^d)}\|w\|_{H^{\alpha}(\T^d)}^{\frac{2m}{p-1}}\|w\|_{L^{p+1}}^{p-m+\frac{p-1-2m}{p-1}}\\
			\leq &C_p E[w]\cdot N^{-\frac{2m\alpha}{p-1}}\log(N)\|Z\|_{L^{\infty}(\T^d)}.
		\end{split}
		\end{equation*}
		
		To ensure the convergence of the dyadic sum in $N$, we need 
		$$ s>\frac{(p-2m-1)\alpha}{p-1}.
		$$
		
		For the term $\mathrm{II'}$, in a similar way, we have
		\begin{equation*}
			\begin{split}
				\|\mathrm{II'}\|_{L^1(\T^d)}
				\leq & C_p N^{-s_0}\|D^{s_0}Z\|_{L^{\infty}(\T^d)}\log(N)E[w]^{\frac{p+1-m}{p+1}}.
			\end{split}
		\end{equation*}
		To ensure the summability in $N$, we need 
		$$ s>s_0>\frac{\alpha}{2}.
		$$
		
		Therefore, if 
		$$ s>\max\left\{\frac{(k-1)\alpha}{k}, \frac{\alpha}{2}\right\},
		$$
		we can find some $s_0<s_1<s$, close to $s$ such that
		\begin{equation}\label{energy1}
		\begin{split}
		&\sum_{m=1}^{2k+2}\frac{\binom{2k+2}{m}}{2k+2-m}\left|\int_0^t\int_{\T^d}w^{2k+2-m}\partial_t (z^m)dxdt'\right|\\
		\leq & C_k\left(1+\|D^{s_1}z\|_{L^{\infty}([0,T];L^{\infty}(\T^d))}^{2k+2}\right)\int_0^tE[w](t')dt'.
		\end{split}
		\end{equation}
		Note that if $k\geq 2$, we have automatically that $\frac{(2k-2)\alpha}{2k}\geq \frac{\alpha}{2}$.
		An extra argument for the case $k=1$ is much simpler, following from a direct use of the Sobolev inequality. The proof of Proposition \ref{energy-estimate} is now complete.
	\end{proof}
	
	The almost sure boundedness of the linear evolution part is guaranteed by the following lemma, see for example Proposition 2.7 in \cite{Sun-Xia}.
	\begin{lemme}
		For any $1\leq q\leq \infty, 2\leq r\leq\infty$ and $\epsilon>0$, there exist $C,c>0$ such that for any $T>0$, 
		$$ \mathbb{P}\left[\|S(t)(u_0^{\omega},v_0^{\omega})\|_{L_t^qL_x^r([0,T]\times\T^d)}>\lambda\right]\leq C\exp\left(-\frac{c\lambda^2}{\max\{1,T^2\}\|(u_0,v_0)\|_{\mathcal{H}^{\epsilon}(\T^d)}^2}\right).
		$$
	\end{lemme}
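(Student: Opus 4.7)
\emph{Proof plan.} The strategy is the standard Gaussian large-deviation recipe: establish an $L^p(\Omega)$ moment bound of the form
\begin{equation*}
\Bigl\|\|S(t)(u_0^\omega,v_0^\omega)\|_{L^q_t L^r_x([0,T]\times\T^d)}\Bigr\|_{L^p(\Omega)} \leq C\sqrt{p}\,\max(1,T)\,\|(u_0,v_0)\|_{\mathcal{H}^\epsilon},
\end{equation*}
valid for all sufficiently large $p$, and then convert it to the claimed Gaussian tail by the usual Chebyshev optimisation in $p$.

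To reach the moment bound, I would first reduce the cases $q=\infty$ and $r=\infty$ to finite exponents, exactly as in the proof of Proposition~\ref{LD}. If $r=\infty$, use the Sobolev embedding $W^{\epsilon_1,r_1}(\T^d)\hookrightarrow L^\infty(\T^d)$ with $\epsilon_1\in(0,\epsilon)$ and $r_1<\infty$ large; the operator $D^{\epsilon_1}$ only multiplies each Fourier mode by $\langle n\rangle^{\epsilon_1}$, and the loss is absorbed by the $\|(u_0,v_0)\|_{\mathcal{H}^\epsilon}$ norm. If $q=\infty$, localise on unit time intervals and use $W^{\delta,q_1}([l,l+1])\hookrightarrow L^\infty([l,l+1])$; the fractional time derivative $D_t^\delta$ pulls down a factor $\langle n\rangle^{\alpha\delta}$ which, upon choosing $\delta$ small with respect to $\epsilon$, is again absorbed. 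The supremum over the $O(T)$ unit intervals is then handled by a union bound, whose sub-exponential loss is negligible against the eventual Gaussian tail.

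With both exponents finite and $p\geq\max(q_1,r_1)$, Minkowski's inequality gives
\begin{equation*}
\Bigl\|\|S(t)(u_0^\omega,v_0^\omega)\|_{L^{q_1}_tL^{r_1}_x}\Bigr\|_{L^p(\Omega)} \leq \Bigl\|\|S(t)(u_0^\omega,v_0^\omega)(x)\|_{L^p(\Omega)}\Bigr\|_{L^{q_1}_tL^{r_1}_x}.
\end{equation*}
For each fixed $(t,x)$, the integrand is a finite sum of independent standard Gaussians, so the Khinchin inequality (equivalently, Gaussian hypercontractivity) yields $\|S(t)(u_0^\omega,v_0^\omega)(x)\|_{L^p(\Omega)}\leq C\sqrt{p}\,\sigma(t,x)$, where $\sigma(t,x)^2$ is the pointwise variance. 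Using $|\cos(n\cdot x)|,|\sin(n\cdot x)|\leq 1$ and the explicit form of $S(t)=\cos(tD^\alpha)u_0+D^{-\alpha}\sin(tD^\alpha)v_0$, this variance is dominated uniformly in $(t,x)$ by $\|u_0\|_{L^2}^2+\|D^{-\alpha}v_0\|_{L^2}^2\leq \|(u_0,v_0)\|_{\mathcal{H}^\epsilon}^2$. Integrating in $(t,x)$ contributes only a factor $T^{1/q_1}|\T^d|^{1/r_1}\lesssim \max(1,T)$, completing the moment bound.

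The main (and only real) technicality is the double endpoint $q=r=\infty$, where the Sobolev reductions in both time and space must be combined; this is exactly what forces the hypothesis $\epsilon>0$. Once the moment bound is in hand, Chebyshev's inequality with $p\sim \lambda^2/\bigl(C\max(1,T)\|(u_0,v_0)\|_{\mathcal{H}^\epsilon}\bigr)^2$ produces the stated tail, with $\max(1,T^2)$ in the denominator of the exponent appearing as the square of the $\max(1,T)$ factor in the moment bound.
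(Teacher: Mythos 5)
Your proof is correct and follows the standard Gaussian large-deviation recipe (Sobolev reduction to finite exponents, Minkowski, Khinchin/Gaussian hypercontractivity, uniform variance bound, Chebyshev optimisation in $p$). The paper does not give its own proof of this lemma -- it simply cites Proposition~2.7 of \cite{Sun-Xia} -- but your argument matches the template the paper itself uses in the proof of Proposition~\ref{LD}, with the simplification that on $\mathbb{T}^d$ one has $|\cos(n\cdot x)|,|\sin(n\cdot x)|\leq 1$ so the pointwise variance is controlled directly by the squared $\mathcal{H}^{\epsilon}$ norm of $(u_0,v_0)$ rather than through the H\"ormander spectral cluster bound \eqref{Hormander}. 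One small remark: the ``union bound over $O(T)$ unit intervals'' is dispensable; after localising the time Sobolev embedding on each unit interval, the $L^\infty_t$ norm is dominated by the $\ell^{q_1}_l$ sum of the local $L^{q_1}$ norms, which is again a single $L^{q_1}_t([0,T])$ norm, and the factor $T^{1/q_1}\leq \max(1,T)$ then comes out directly from Minkowski as in your finite-exponent case.
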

	Now the probabilistic estimate above, the local well-posedness result (analogue of Proposition \ref{local_theory}) and Proposition \ref{energy-estimate} yield the following almost almost sure global well-posedness and the convergence result.
	\begin{proposition}\label{a.a.s.GWP}
		Given $\frac{(k-1)\alpha}{k}<s<\alpha$, for any data $(u_0,v_0)\in \mathcal{H}^s(\T^d)$, let $(u_0^{\omega},v_0^{\omega})$ be the randomisation defined as \eqref{randomization-general}. Then given any $T>0, \epsilon>0$, there exists $\Omega_{T,\epsilon}\subset \Omega$ such that \begin{enumerate}
			\item $\mathbb{P}[\Omega\setminus\Omega_{T,\epsilon}]<\epsilon$.
			\item For any $\omega\in \Omega_{T,\epsilon}$, there exists a unique solution $(u^{\omega}(t),\partial_tu^{\omega}(t))$ to \eqref{FNLKG} with initial data $(u_0^{\omega}, v_0^{\omega})$ in the class:
			$$ (S(t)(u_0^{\omega},v_0^{\omega}),\partial_t S(t)(u_0^{\omega},v_0^{\omega}) )+ C([0,T];\mathcal{H}^{\alpha}).
			$$
			Moreover, the nonlinear part $w^{\omega}(t)=u^{\omega}(t)-S(t)(u_0^{\omega}, v_0^{\omega})$ satisfies the probabilistic energy bound:
			\begin{equation}\label{energybound}
			\sup_{0\leq t\leq T}\|(w^{\omega}(t),\partial_t w^{\omega}(t))\|_{\mathcal{H}^{\alpha}(M)}\leq C(T,\epsilon,\|(u_0,v_0)\|_{\mathcal{H}^s}).
			\end{equation}
			
			\item Denote by $(u_{0,N}^{\omega}, v_{0,N}^{\omega})=\Pi_N(u_0^{\omega}, v_0^{\omega})$, then for any $\omega \in \Omega_{T,\epsilon}$, the smooth solution $(u_N^{\omega}, \partial_tu_N^{\omega})$ of \eqref{FNLKG} with initial data $(u_{0,N}^{\omega}, v_{0,N}^{\omega})$ converges to the solution $(u^{\omega}(t), \partial_tu^{\omega}(t))$ constructed in (2), in $\mathcal{H}^s$.
		\end{enumerate}
	\end{proposition}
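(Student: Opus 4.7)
The plan is to combine the probabilistic linear bound from the preceding lemma with a deterministic local theory in the class $z^\omega+\mathcal{H}^\alpha$, the a priori estimate of Proposition~\ref{energy-estimate}, and a continuation argument.

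First, I would define the exceptional set. Fix $s_1<s$ close to $s$ as in Proposition~\ref{energy-estimate}, and choose $\epsilon'>0$ small so that $s_1+\epsilon'<s$. Applying the preceding lemma to $D^{s_1}S(t)(u_0^\omega,v_0^\omega)$ with $q=r=\infty$ and data norm $\mathcal{H}^{s_1+\epsilon'}\subset\mathcal{H}^s$ yields
\[
\mathbb{P}\bigl[\|D^{s_1}S(t)(u_0^\omega,v_0^\omega)\|_{L^\infty([0,T]\times\T^d)}>K\bigr]\leq C\exp\bigl(-cK^2/(T^2\|(u_0,v_0)\|_{\mathcal{H}^s}^2)\bigr).
\]
Choosing $K:=C(1+T)\|(u_0,v_0)\|_{\mathcal{H}^s}\sqrt{|\log\epsilon|}$ and intersecting with the analogous bound applied to every truncated datum $\Pi_N(u_0^\omega,v_0^\omega)$, I take $\Omega_{T,\epsilon}$ to be the resulting event; an appropriate allocation of the failure probability between the truncated levels gives $\mathbb{P}[\Omega\setminus\Omega_{T,\epsilon}]<\epsilon$ together with $\sup_{N\in\mathbb{N}\cup\{\infty\}}\|D^{s_1}z_N^\omega\|_{L^\infty([0,T]\times\T^d)}\lesssim K$, where $z_N^\omega:=S(t)\Pi_N(u_0^\omega,v_0^\omega)$ and $z^\omega:=z_\infty^\omega$.

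Second, for $\omega\in\Omega_{T,\epsilon}$ I write $u^\omega=z^\omega+w^\omega$; then $w^\omega$ solves $\partial_t^2 w+D^{2\alpha}w+(z^\omega+w)^{2k+1}=0$ with zero data. A contraction argument in $C([0,\tau];H^\alpha(\T^d))$, performed exactly as in the proof of Proposition~\ref{local_theory} but with the polynomial $(z^\omega+w)^{2k+1}$ in place of $e^{\pi_N(z^\omega+w)}$ and using $\alpha>d/2$ for $H^\alpha\hookrightarrow L^\infty$, gives local existence and uniqueness in the class $z^\omega+C([0,\tau];\mathcal{H}^\alpha)$ with $\tau$ depending only on $\|z^\omega\|_{L^\infty_{t,x}}$. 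Proposition~\ref{energy-estimate} then upgrades this to
\[
\sup_{0\leq t\leq T}\|(w^\omega(t),\partial_tw^\omega(t))\|_{\mathcal{H}^\alpha}^2\lesssim E[w^\omega](T)\leq C_k(1+K^{2k+2})\exp\bigl(T(1+K^{2k+2})\bigr),
\]
which precludes $\mathcal{H}^\alpha$-blow-up, so iterating the local theory yields a global solution on $[0,T]$ satisfying~\eqref{energybound}. For part (3) the same construction applied to $(u_{0,N}^\omega,v_{0,N}^\omega)$ produces $u_N^\omega=z_N^\omega+w_N^\omega$ with $w_N^\omega$ uniformly bounded in $C([0,T];\mathcal{H}^\alpha)$ thanks to the uniform $L^\infty$ control of $D^{s_1}z_N^\omega$. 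The nonlinear difference $w_N^\omega-w^\omega$ solves a linear wave equation with source $(z_N^\omega+w_N^\omega)^{2k+1}-(z^\omega+w^\omega)^{2k+1}$, which factors pointwise as a sum of terms linear in either $z_N^\omega-z^\omega$ or $w_N^\omega-w^\omega$ and multiplied by $2k$ uniformly $L^\infty$-bounded factors; an $\mathcal{H}^\alpha$-energy estimate combined with $\|z_N^\omega-z^\omega\|_{L^\infty_t L^2}\to 0$ and Gronwall then gives $w_N^\omega-w^\omega\to 0$ in $C([0,T];\mathcal{H}^\alpha)$, and hence $u_N^\omega-u^\omega\to 0$ in $C([0,T];\mathcal{H}^s)$.

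The main obstacle I anticipate is the uniformity in $N$ required in step one: the sharp Dirichlet projector $\Pi_N$ is not uniformly bounded on $L^\infty(\T^d)$, so the deterministic comparison $\|D^{s_1}z_N^\omega\|_{L^\infty}\lesssim \|D^{s_1}z^\omega\|_{L^\infty}$ fails. One must instead exploit the Gaussian independence of the Fourier modes — in the spirit of Proposition~\ref{probabilistic2} — to bound $\|D^{s_1}(z^\omega-z_N^\omega)\|_{L^\infty}$ summably in $N$, so that a single exceptional set $\Omega_{T,\epsilon}$ controls all $z_N^\omega$ simultaneously with only a constant blow-up in $K$. Once this uniformity is secured, the contraction argument, Proposition~\ref{energy-estimate}, and the Gronwall closing step for the difference are structurally parallel to arguments already carried out in the paper.
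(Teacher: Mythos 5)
Your proposal is correct and follows essentially the same route the paper indicates: probabilistic $L^\infty_{t,x}$ control of $D^{s_1}$ applied to the linear evolution (the final lemma of Section~7), a deterministic contraction in $z^\omega+C([0,\tau];\mathcal{H}^\alpha)$ using $\alpha>d/2$, globalisation via Proposition~\ref{energy-estimate}, and a Gronwall closing of the difference $w_N^\omega-w^\omega$ once a uniform local time is secured from the global energy bound — exactly the scheme the authors attribute to \cite{OTh-Po,Sun-Xia} for (1)–(2) and to Section~5 (analogue of Lemma~\ref{convergence-shortime}) for (3). You also correctly flag the only real subtlety the paper leaves implicit, namely that the sharp projector $\Pi_N$ is not uniformly $L^\infty$-bounded so the uniform-in-$N$ control of $\|D^{s_1}z_N^\omega\|_{L^\infty_{t,x}}$ must come from a tail estimate on $\Pi_N^\perp$ (a general-randomisation analogue of Proposition~\ref{probabilistic2}, using $\|\Pi_N^\perp(u_0,v_0)\|_{\mathcal{H}^{s_1+\epsilon'}}\lesssim N^{-(s-s_1-\epsilon')}\|(u_0,v_0)\|_{\mathcal{H}^s}$ plus Borel--Cantelli) rather than from a deterministic comparison; this resolution is sound.
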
 
	The proof of (1) and (2) in this proposition is standard, see for example \cite{OTh-Po} or \cite{Sun-Xia}. The proof of (3) follows from the similar argument as in Section 5. The key point is the analogue of Lemma \ref{convergence-shortime} which guarantees the convergence in a short time interval. Then thanks to the global energy bound \eqref{energybound} of the nonlinear part  $w(t)$, the time interval of the local convergence can be chosen to be uniform. Finally, we obtain the convergence up to time $t=T$.
	
	To pass to the global existence and convergence, we define the set 
	$$ \Omega_{T}:=\bigcup_{k=1}^{\infty}\Omega_{T,2^{-k}}.
	$$
	We have that $\Omega_T$ is of full probability. Now let $\displaystyle{\widetilde{\Sigma}:=\limsup_{T\rightarrow\infty}\Omega_T}$, then $\widetilde{\Sigma}$ still has full measure. Furthermore, for any $\omega\in \widetilde{\Sigma}$, the conclusions (2), (3) of 
	Proposition~\ref{a.a.s.GWP} hold true up to $T=+\infty$.

	

\end{document}